\documentclass[11pt]{amsart}
\usepackage{amsmath,amsfonts,xargs,enumitem}
\usepackage[utf8]{inputenc}
\usepackage{graphicx} 
\usepackage{amsmath, amsthm, amssymb, amsfonts}
\usepackage{tkz-tab}
\usepackage{theoremref}
\usepackage{thmtools}
\usepackage{hyperref}
\usepackage{comment}
\usepackage{color}
\usepackage{mathtools}
\usepackage{subcaption}
\mathtoolsset{showonlyrefs}
\usepackage{lmodern}
\hypersetup{colorlinks=true,urlcolor=black, pdftitle=''FunOfAlpha''}
\usepackage{pgfplots}
\pgfplotsset{compat=1.18}

\newcommand{\R}{{\mathbb R}}

\theoremstyle{plain}

\newtheorem{theorem}{Theorem}[section]
\newtheorem{lemma}[theorem]{Lemma}

\newtheorem{corollary}[theorem]{Corollary}
\newtheorem{prop}[theorem]{Proposition}

\theoremstyle{definition}

\newtheorem{defn}[theorem]{Definition}

\theoremstyle{remark}

\newtheorem{remark}{Remark}

\title[On $\alpha$-convolutions]{Integral inequalities for $\alpha$-convolutions\\of $\alpha$-concave functions}
\author[M. Madiman]{Mokshay Madiman}
\address[M. Madiman]{University of Delaware, Newark, DE, 19716, USA}
\email{madiman@udel.edu}
\author[A. Manui]{Auttawich Manui}
\address[A. Manui]{Kent State University, Kent, OH 44242, USA}
\email{amanui@kent.edu}
\author[B. Zawalski]{Bart\l{}omiej Zawalski}
\address[B. Zawalski]{Case Western Reserve University, Cleveland, OH 44106, USA}
\email{bxz428@case.edu}
\author[A. Zvavitch]{Artem Zvavitch}
\address[A. Zvavitch]{Kent State University, Kent, OH 44242, USA}
\email{zvavitch@math.kent.edu}

\subjclass[2020]{Primary: 52A40; Secondary: 52A21, 26B25}
\keywords{Functional analysis, Pl\"unnecke--Ruzsa inequalities, reverse Pr\'ekopa--Leindler inequalities, Rogers--Shephard inequalities}

\begin{document}

\begin{abstract}
Classical sumset inequalities originating in additive combinatorics admit geometric analogues for convex bodies in finite-dimensional real vector spaces, as recently developed by Fradelizi and two of the authors. We develop integral analogues for geometric $\alpha$-concave functions under $\alpha$-convolutions, an operation that arises naturally in the ``geometrization of probability'' program. In particular, we establish a Pl\"unnecke--Ruzsa-type inequality, as well as sharp analogues of sum-difference and Ruzsa triangle inequalities,  for $\alpha$-convolutions of $\alpha$-concave functions. We also prove a sharp Rogers--Shephard-type inequality and characterize its equality cases for $\alpha$-concave functions, bridging the log-concave case studied by Alonso-Guti\'errez, Gonz\'alez-Merino, Jim\'enez, and Villa and the quasi-concave case studied by Colesanti.
\end{abstract}

\maketitle

\section{Introduction}

\subsection{Functional liftings of sumset inequalities 
in \texorpdfstring{$\R^n$}{Rn}}

The translation of concepts from convex geometry into analytic frameworks, namely functional analysis and information theory, has become an active area of research. The analytic viewpoint extends classical geometric inequalities to functions (whose ``size'' is measured by the integral) or to probability measures (whose ``size'' is measured by the entropy). These so-called integral and entropic liftings replace geometric statements involving sets by functional or probabilistic analogues. Prominent examples are the Pr\'ekopa--Leindler inequality and the entropy power inequality, which are analytic and probabilistic counterparts of the Brunn--Minkowski inequality. We refer the reader to the surveys  \cite{G-02,Mi-08, S-14, MMX-17, Col17} for detailed discussions of these liftings.

If $A+B=\{a+b: a \in A, b \in B\}$ is the Minkowski sum of nonempty compact subsets $A, B \subset \mathbb{R}^n$, and $|\cdot|$ denotes the (Lebesgue) measure on $\R^n$, the Brunn--Minkowski inequality asserts that
\begin{equation}
    \label{eq:Brunn–Minkowski inequality}
    |A+B|^{1 / n} \geq|A|^{1 / n}+|B|^{1 / n}.
\end{equation}
The functional lifting of the Brunn--Minkowski inequality is given by the Pr\'ekopa--Leindler inequality \cite{P-73,L-72}. For integrable functions $f,g :\R^n \to [0,\infty)$ and any $\lambda \in [0,1]$, the Pr\'ekopa--Leindler inequality asserts that
\begin{equation}
    \label{eq:pli}
    \int \left(\lambda \cdot f\right) \star \left((1-\lambda) \cdot g\right)
    \geq
    \left( \int f \right)^{\lambda}
    \left( \int g \right)^{1-\lambda},
\end{equation}
where
$
    (f \star g) (x) =\sup_y f(y) g(x-y)
$
is the Asplund product, and $(\lambda \cdot f) (x) = f(x/\lambda)^\lambda$.
Observe that when $f = 1_A$ and  $g = 1_B$ for compact sets $A, B$,  the integrand on the left is $1_{\lambda A + (1-\lambda)B}$, and then the Pr\'ekopa--Leindler inequality \eqref{eq:pli} reduces to an equivalent form of the Brunn--Minkowski inequality \eqref{eq:Brunn–Minkowski inequality}. The Pr\'ekopa--Leindler inequality is central to understanding the class of log-concave functions.

Throughout this paper, we work with a one-parameter family of function classes that generalizes log-concave functions.

\begin{defn} \label{def:C_alpha}
Let $-\infty \leq \alpha \leq 0$ be fixed. A function $f: \mathbb{R}^n\to \mathbb{R}_+$ is $\alpha$-concave if for all $x,y$ in the support of $f$ and $\lambda \in [0,1]$,
\begin{equation}
    f(\lambda x + (1 - \lambda)y)
    \geq \left[\lambda f(x)^\alpha + (1 - \lambda)f(y)^\alpha\right]^{1/\alpha}.
\end{equation}
We write
\begin{equation*}
C_\alpha(\mathbb{R}^n)=
\left\{f: \mathbb{R}^n\to \mathbb{R}_+  \Bigg|\begin{aligned}
\,\,\, & f \text{ is upper semicontinuous and $\alpha$-concave}  , \\
&\|f\|_{\infty}=f(0)=1, \lim_{|x| \to \infty}  f(x) = 0
\end{aligned}\right\}.
\end{equation*}
\end{defn}

This definition forms a continuous bridge between indicator functions of convex sets ($\alpha = \infty$), log-concave functions ($\alpha= 0$), and quasi-concave functions ($\alpha = -\infty$), extending geometric convexity into the analytic domain. The original motivation of this definition was the Borell--Brascamp--Lieb inequality, developed by Borell~\cite{B-75} and later Brascamp and Lieb~\cite{BL-76}, whose concavity parameter naturally suggests consideration of the class of $\alpha$-concave functions.

Note that every element of $C_\alpha(\mathbb{R}^n)$ attains its maximum value $1$ at the origin. These functions are sometimes called the ``geometric
$\alpha$-concave functions''.

We also work with a one-parameter family of convolutions, called $\alpha$-convolutions, which are a generalization of the Asplund product introduced by Rotem \cite{R-13,MR-13}.
While we give the precise definition in  Section~\ref{section: alpha-concave},
for $-\infty < \alpha <0$, the $\alpha$-convolution is given informally by
\begin{equation}
    (f \star_\alpha g) (x) = \sup_{ y} (f(y)^\alpha + g(x-y)^\alpha -1)^{1/\alpha},
\end{equation}
and converges to the Asplund product as \(\alpha\to0^{-}\).
If $f(y)=0$ or $g(x-y)=0$, then the expression inside the supremum is taken to be zero.

The fact that the volumes of Minkowski sums of convex bodies are much more constrained than those of arbitrary compact bodies motivates the enormous literature on volume inequalities in convex geometry.
Our perspective, as in the literature on functional liftings in general, is that when we shift our consideration from compact sets to the
more general setting of integrable functions,
a good replacement for the notion of convexity in the functional domain is to consider $\alpha$-concave functions for some $\alpha$. Adjusting the value of $\alpha$ allows us to adjust the strength of the convexity assumption we are making on the functions under consideration.

The goal of this paper is to establish functional liftings of some fundamental sumset inequalities for convex bodies, which are expressed as inequalities for the integrals of $\alpha$-convolutions of $\alpha$-concave functions when $\alpha \leq 0$. In several cases, we are able to establish inequalities with sharp constants.

In the rest of this introduction, we outline our main results by first recalling some useful and interesting volume inequalities for convex bodies in  ${\mathbb R}^n$, and then describing their functional liftings.

\subsection{Functional liftings of Pl\"unnecke--Ruzsa-type inequalities}

The classical Pl\"unnecke--Ruzsa inequalities \cite{P-70, R-89, TV-06}, which  play a central role in additive combinatorics, were first developed in the discrete setting of abelian groups, and generalized by Ruzsa \cite{R-96} to locally compact abelian groups equipped with Haar measure. In particular, he showed that for any compact sets $A,B_1, \ldots ,B_m$ in ${\mathbb R}^n$,
\begin{equation}\label{eq:ruzvol1}
|A|^{m-1}|B_1+\ldots +B_m| \le \prod_{i=1}^m|A + B_i|.
\end{equation}
This is an immediate consequence of a slightly more general result asserting that if $|A|>0$, for every $\varepsilon >0$, there exists a nonempty compact set $A' \subset A $ with
 \begin{equation}\label{eq:ruzvol}
 |A|^m|A'+B_1+\ldots +B_m| \le (1+\varepsilon)|A'| \prod_{i=1}^m|A + B_i|.
 \end{equation}
Note that when $m=2$, this looks rather similar to
\eqref{eq:Ent_Plun-Ruz}, except for the need of transition to a subset $A'$.

On the other hand, the entropic lifting of the Brunn--Minkowski inequality is the entropy power inequality ~\cite{S-48,S-59}, which asserts that for independent random variables $X,Y$ in $\R^n$, one has
\begin{equation}
    \label{eq:EPI}
    N(X+Y) \geq N(X)+N(Y) .
\end{equation}
Here $N(X) = e^{2 h(X) / n}$ is called the entropy power of $X$,  where $h(X) = \mathbb{E} [ -\log f(X)]$ denotes the differential or Shannon-Boltzmann entropy, and $f$ is the probability density function of $X$.
The first-named author established the log-submodularity of the entropy power \cite{M-08} (see also \cite{MK-18} for variants and consequences, developed in the general setting of locally compact abelian groups). If $X,Y,Z$ are independent random variables with absolutely continuous distributions, this property states that
\begin{equation}
    \label{eq:Ent_Plun-Ruz}
    N(X)N(X+Y+Z) \leq N(X+Y)N(X+Z).
\end{equation}
It is natural to think of the inequality \eqref{eq:Ent_Plun-Ruz} as an information-theoretic Pl\"unnecke--Ruzsa-type inequality.

In view of the parallels between \eqref{eq:ruzvol} and \eqref{eq:Ent_Plun-Ruz}, it is natural to ask whether volume is log-submodular, at least up to a constant. Bobkov and the first-named author \cite{BM-12} obtained the first progress in this direction for convex bodies (compact, convex, with nonempty interiors). Let $c_n$ be the smallest possible number satisfying that for any convex bodies $A,B,C$ in $\R^n$, one has
\begin{equation}
    \label{eq:Plun-Ruza_general}
    |A| |A+B+C| \leq c_n |A+B| |A+C|.
\end{equation}
It was proved in \cite{BM-12} that $c_n \leq 3^n$. However, the anticipated log-submodularity property fails for convex bodies, as observed independently by Nayar and Tkocz \cite{NT-17} and Fradelizi, the first and fourth named authors \cite{FMZ-24}. The latter work established the two-sided bounds:
\begin{equation}
    \label{Ineq_Plun-Ruz-sharper}
    \left(\max_{\substack{0\leq i,j\leq n\\k=i+j-n\geq 0}} \frac{\binom{i}{k}\binom{j}{k}}{\binom{n}{k}} \right) \leq c_n \leq \varphi^n,
\end{equation}
where $\varphi = \frac{1+\sqrt{5}}{2}$ is the golden ratio. Using Stirling approximation, one obtains $c_n \geq \frac{2}{\sqrt{\pi n}}\left(\frac{4}{3}\right)^n\left(1+ O(\frac{1}{n})\right)$. While the sharp constant $c_n$ is unknown in dimension $n > 3$, it was shown in \cite{FMZ-24} that $c_3=4/3$. We refer to \eqref{eq:Plun-Ruza_general} as a Pl\"unnecke--Ruzsa-type inequality for convex bodies.
Related studies appear in \cite{FMMZ-24,FHNMZ-26,FHNMZ-26-2,MNZ-24,FMZ-24,N-25}.

Our first result is a Pl\"unnecke--Ruzsa-type inequality for log-concave functions, stated in the following theorem:

\begin{restatable}{theorem}{plunruzzero}
\label{thm: Plu-Ruz-0}
Let $\alpha, \beta \in [-\infty, 0]$, and let $ f,g,h \in C_0 (\R^n)$. Then,
    \begin{equation}\label{eq:Plu-Ruz-0}
        \|f\|_1 \|f \star_\alpha g \star_\alpha h\|_1 \leq c^n \|f \star_\beta g \|_1\|f  \star_\beta h\|_1,
    \end{equation}
    where $c$ is an absolute constant such that $c < 26.$
\end{restatable}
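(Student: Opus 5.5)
We reduce the statement to the convex-body inequality \eqref{eq:Plun-Ruza_general} with $c_n\le\varphi^n$, passing through level sets. Three preliminary observations are used throughout.

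First, the $\alpha$-convolutions are monotone in $\alpha$. For $a,b\in[0,1]$ one has $(a^\alpha+b^\alpha-1)^{1/\alpha}\le ab$ whenever $\alpha\le 0$. This follows by writing $a=e^{-s}$, $b=e^{-t}$ with $s,t\ge0$ and using the convexity of $u\mapsto e^{-\alpha u}$. Hence for $\alpha\le\beta\le 0$ we get $f\star_\alpha g\le f\star_\beta g$. The extreme cases are
\[
f\star_{-\infty}g(x)=\sup_y\min\{f(y),g(x-y)\}\le f\star_\alpha g\le f\star_0 g=f\star g .
\]
It therefore suffices to show
\[
\|f\|_1\,\|f\star g\star h\|_1\le c^n\,\|f\star_{-\infty}g\|_1\,\|f\star_{-\infty}h\|_1,
\]
that is, Asplund products on the left and sup-min convolutions on the right.

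Second, on the right we use the level-set identity for the sup-min convolution. Writing $K_t(f)=\{f\ge t\}$, which is a convex body for $f\in C_0$ and $t\in(0,1]$ by upper semicontinuity and decay, we have
\[
\{f\star_{-\infty}g\ge t\}=K_t(f)+K_t(g).
\]
Consequently
\[
\|f\star_{-\infty}g\|_1=\int_0^1|K_t(f)+K_t(g)|\,dt .
\]

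Third, on the left we bound the triple Asplund product by level sets at a single comparable height. If $f(y)g(z)h(x-y-z)\ge s$, then each factor is at least $s$. This gives only $\{f\star g\star h\ge s\}\subset K_s(f)+K_s(g)+K_s(h)$, which is too weak when $s$ is small.

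Instead we exploit log-concavity: $f\ge e^{-\phi}$ with $\phi$ convex and $\phi(0)=0$. The level sets therefore satisfy $K_{e^{-\lambda r}}\supset \lambda K_{e^{-r}}$ for $\lambda\ge 1$, and $K_{e^{-\lambda r}}\subset \lambda K_{e^{-r}}$ as well. This gives the scaling comparison $|K_{e^{-\lambda r}}(f)|\le\lambda^n|K_{e^{-r}}(f)|$.

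Now write $f\star g\star h(x)\ge e^{-r}$. Then $x=y+z+w$ with $\phi_f(y)+\phi_g(z)+\phi_h(w)\le r$, so each term is at most $r$. Hence
\[
\{f\star g\star h\ge e^{-r}\}\subset K_{e^{-r}}(f)+K_{e^{-r}}(g)+K_{e^{-r}}(h).
\]
Apply \eqref{eq:Plun-Ruza_general} at each height $t=e^{-r}$ with $A=K_t(f)$, $B=K_t(g)$, $C=K_t(h)$. We also invoke the standard comparisons $\|f\|_1\approx|K_{1/e}(f)|$ up to factors like $n!e^n/n^n$ or $e^n$, which are valid for geometric log-concave functions with maximum at $0$. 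Together these give
\[
|K_t(f)|\,|K_t(f)+K_t(g)+K_t(h)|\le\varphi^n\,|K_t(f)+K_t(g)|\,|K_t(f)+K_t(h)|.
\]

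It remains to integrate and compare integrals with values at a fixed level. The cleanest route is to pick the single level $t_0=e^{-1}$ on the right-hand side and the level $e^{-r}$ on the left. Using the scaling $K_{e^{-r}}\subset rK_{e^{-1}}$ for $r\ge1$ together with $K_{e^{-r}}\subset K_{e^{-1}}$ for $r\le 1$, we get
\[
\|f\star g\star h\|_1\le\int_0^\infty |K_{e^{-r}}+\cdots|\,e^{-r}\,dr\le C_1^n\,|A+B+C|,
\]
where $A,B,C$ are now the level sets at height $e^{-1}$ and $\int\max(1,r)^n e^{-r}\,dr\le n!+e$ is absorbed into $C_1^n$ after comparison with $n^n$ factors. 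Similarly, $\|f\|_1\le C_2^n|A|$, while
\[
\|f\star_{-\infty}g\|_1\ge e^{-1}|A+B|.
\]
Combining these, the constant $c$ is a product of $\varphi$ with factors of order $e$ and $e$-type Stirling factors.

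The main obstacle is to keep this product below $26$. This requires a careful choice of the reference level $t_0$, possibly different for the left and right sides, and sharp level-set volume comparisons for log-concave functions, such as $|K_t|\le(\log(1/t))^n|K_{1/e}|$ type bounds. The bookkeeping of the resulting $n$-dependent Stirling factors, of the form $n!/n^n\cdot e^n$, is where the numerical constant is determined.
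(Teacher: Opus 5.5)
Your reduction rests on a reversed inequality. For $a,b\in[0,1]$ and $\alpha<0$, your convexity argument does give $a^\alpha+b^\alpha-1\le (ab)^\alpha$. But $u\mapsto u^{1/\alpha}$ is decreasing, so this means $(a^\alpha+b^\alpha-1)^{1/\alpha}\ge ab$; similarly $\min\{a,b\}\ge ab$. Hence $f\star_0 g\le f\star_\alpha g\le f\oplus g$ (Proposition~\ref{prop:monotonicity}), the opposite of what you wrote. The inequality you reduce to, with Asplund products on the left and quasi-sums on the right, is therefore the weakest instance $(\alpha,\beta)=(0,-\infty)$ of the theorem, and it does not yield any other pair. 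What has to be proved is
\[
\|f\|_1\,\|f\oplus g\oplus h\|_1\le c^n\,\|f\star_0 g\|_1\,\|f\star_0 h\|_1 .
\]
Here the roles of your tools swap. The exact identity $U_t(f\oplus g\oplus h)=U_t(f)+U_t(g)+U_t(h)$ now serves the left side. On the right you only have $U_s(f)+U_t(g)\subset U_{st}(f\star_0 g)$, so the argument must absorb the drop from heights $s,t$ to $st$ on the Asplund side, which your sketch never addresses. The paper writes $\|f\|_1\|f\oplus g\oplus h\|_1$ as a double integral of $|U_s(f)|\,|U_t(f)+U_t(g)+U_t(h)|$ and splits it into $s<t$ and $t\le s$. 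It then applies \eqref{eq:Plun-Ruza_general} together with that inclusion. Finally it decouples the weighted integral of $|U_r(f\star_0 g)|\,|U_r(f\star_0 h)|$ via the stochastic-order comparison of Lemma~\ref{lem:stoc-1} and the monotonicity of $|\{\varphi\le t\}|/t^n$, obtaining $c_n(4^n+n)\binom{2n}{n}<26^n$.

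The quantitative part also fails as written. Comparing with the single height $e^{-1}$ costs $\int_0^\infty\max(1,r)^n e^{-r}\,dr\approx n!$, and this loss is real. For $f=e^{-\|x\|_K}$ one has $\|f\|_1=n!\,|K|=n!\,|K_{1/e}(f)|$, so the claimed comparison $\|f\|_1\approx|K_{1/e}(f)|$ up to $e^n$-type factors is false. Nothing on the other side compensates, since you only have $\|f\star_{-\infty}g\|_1\ge e^{-1}|A+B|$. As set up, the argument gives a constant of order $(n!)^2\varphi^n$, not $c^n$, and you leave the bookkeeping toward $c<26$ undone. Also, $K_{e^{-\lambda r}}\supset\lambda K_{e^{-r}}$ for $\lambda\ge 1$ is false (take $\phi(x)=|x|^2$); only $K_{e^{-\lambda r}}\subset\lambda K_{e^{-r}}$ holds.

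A single-height argument can be made to work, but only in the correct orientation and at a height $e^{-a}$ with $a$ of order $n$. Set $A=U_{e^{-a}}(f)$, $B=U_{e^{-a}}(g)$, $C=U_{e^{-a}}(h)$. The inclusion $\{\psi\le r\}\subset\frac{r}{a}\{\psi\le a\}$ for $r\ge a$ (proof of Lemma~\ref{lem:monotonicity-V}) gives $\|f\|_1\le(1+a^{-n}n!)|A|$ and $\|f\oplus g\oplus h\|_1\le(1+a^{-n}n!)|A+B+C|$. Since $f\star_0 g\ge e^{-2a}$ on $A+B$, you also get $\|f\star_0 g\|_1\ge e^{-2a}|A+B|$. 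Taking $a=n/e$ yields a constant of order $(\varphi e^{4/e})^n$ up to a polynomial factor. Neither this choice of height nor the Asplund-side lower bound appears in your proposal.
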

\noindent
Here, $\|f\|_1$ denotes the $L_1$-norm of $f$. The optimal constant remains an open problem. However, it must grow exponentially with the dimension, since by taking $f =1_A, g= 1_B$ and $h =1_C$ for some convex bodies $A,B,C$, we recover \eqref{eq:Plun-Ruza_general}.

We next extend Theorem \ref{thm: Plu-Ruz-0} to the class of $\alpha$-concave functions by comparing $f \in C_\alpha(\R^n)$ with an associated log-concave function, see Corollary \ref{cor:01}.

\begin{restatable}{theorem}{plunruzalpha}
\label{thm: Plu-Ruz-alpha}
Let $\alpha \in (-1/n,0]$, and let $f,g,h \in C_\alpha (\R^n) $. Then,
    \[
        \|f\|_1 \| f \star_\alpha g \star_\alpha h \|_1 \leq \frac{c^n}{\prod_{j=1}^n(1+j \alpha)^2}\| f \star_\alpha g \|_1 \| f \star_\alpha h \|_1,
    \]
    where $c$ is an absolute constant such that $c <26$.
\end{restatable}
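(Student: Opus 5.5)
We plan to deduce Theorem~\ref{thm: Plu-Ruz-alpha} from Theorem~\ref{thm: Plu-Ruz-0} by a comparison between $\alpha$-concave functions and associated log-concave functions, as indicated by the reference to Corollary~\ref{cor:01}. Fix $\alpha\in(-1/n,0)$; the case $\alpha=0$ is exactly Theorem~\ref{thm: Plu-Ruz-0}. Every $f\in C_\alpha(\R^n)$ is also quasi-concave, and its superlevel sets $\{f\ge t\}$ are convex bodies containing the origin. The key device is to associate to $f$ a log-concave function $\tilde f\in C_0(\R^n)$. The natural choice is the ``log-concave hull'' $\tilde f$ built from the same level sets reparametrized. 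An alternative is the largest log-concave minorant normalized at $0$. The associated function should satisfy two properties. First, it should give a pointwise comparison $\tilde f\le f$ together with $\|f\|_1\le \kappa_{n,\alpha}\|\tilde f\|_1$, where $\kappa_{n,\alpha}=\prod_{j=1}^n(1+j\alpha)^{-1}$ up to a factor $c_0^n$. Second, it should be compatible with convolutions, namely $\tilde f\star_0\tilde g\le \widetilde{f\star_\alpha g}$ or at least $\tilde f\star_0 \tilde g\le f\star_\alpha g$.

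The main inequality chain will then be
\[
\|f\|_1\|f\star_\alpha g\star_\alpha h\|_1 \;\le\; \text{(compare to log-concave side)}\;\le\; c^n\|\tilde f\star_\beta \tilde g\|_1\|\tilde f\star_\beta\tilde h\|_1 \;\le\; \ldots\|f\star_\alpha g\|_1\|f\star_\alpha h\|_1 .
\]
For the right end we use Theorem~\ref{thm: Plu-Ruz-0} with $\beta=\alpha$ applied to $\tilde f,\tilde g,\tilde h$, together with the monotonicity $\tilde f\star_\alpha\tilde g\le f\star_\alpha g$, which follows from $\tilde f\le f$ and the monotonicity of the $\alpha$-convolution. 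For the left end we need an upper comparison. We will bound $\|f\|_1\le \kappa\|\tilde f\|_1$ and $\|f\star_\alpha g\star_\alpha h\|_1\le \kappa\|\tilde f\star_0\tilde g\star_0\tilde h\|_1$-type estimates, each costing one factor $\prod(1+j\alpha)^{-1}$. This explains the square in the statement.

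The integral comparison $\|F\|_1\le \prod_{j=1}^n(1+j\alpha)^{-1}\|\tilde F\|_1$ for $F\in C_\alpha$ is the quantitative core, and we expect it to be the content of Corollary~\ref{cor:01}. It comes from the layer-cake formula $\|F\|_1=\int_0^1|\{F\ge t\}|\,dt$. Along rays from the origin, $F^{\alpha}$ is convex with value $1$ at $0$, so the level sets satisfy a Borell-type growth. The extremal case is $F(x)=(1+\|x\|_K)^{1/\alpha}$. For it, $\|F\|_1=n|K|\int_0^\infty r^{n-1}(1+r)^{1/\alpha}dr$, a Beta integral that equals $|K|\,n!/\prod_{j=1}^n(-1/\alpha-j)\cdot(\ldots)$. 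Comparing this with the log-concave counterpart $e^{-\|x\|_K}$, whose integral is $n!|K|$, produces exactly $\prod_{j=1}^n(1+j\alpha)^{-1}$ after rescaling $K$ by $|\alpha|$. This is finite precisely when $\alpha>-1/n$.

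The main obstacle is choosing $\tilde F$ so that both directions hold simultaneously. The pointwise lower bound $\tilde F\le F$ must hold, together with the integral upper bound, and $\tilde F$ must also be stable under convolution, namely $\widetilde{f}\star_0\widetilde g\star_0\widetilde h$ should dominate a rescaled version of $f\star_\alpha g\star_\alpha h$. Our first attempt is to take $\tilde F(x)=\exp\bigl(-(F(x)^{\alpha}-1)/|\alpha|\bigr)$. This is log-concave because $F^\alpha$ is convex, it satisfies $\tilde F\le F$ since $e^{-s}\le(1+|\alpha| s)^{1/\alpha}$, and it converts $\star_\alpha$ into $\star_0$ exactly, because $F^\alpha-1$ adds under $\alpha$-convolution. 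With this choice, the ratio $\|F\|_1/\|\tilde F\|_1$ is controlled by the one-dimensional radial comparison above, giving the factor $\prod(1+j\alpha)^{-1}$.
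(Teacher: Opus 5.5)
Your proposal is correct and is essentially the paper's proof: your final choice $\tilde F=\exp\bigl(-(F^\alpha-1)/|\alpha|\bigr)$ is exactly $B_\alpha F=e^{-\operatorname{base}_\alpha F}$. The chain you describe is the paper's argument verbatim: apply the comparison $\|F\|_1\le\prod_{j=1}^n(1+j\alpha)^{-1}\|B_\alpha F\|_1$ of Corollary~\ref{cor:01} to $f$ and to $f\star_\alpha g\star_\alpha h$, use the identity $B_\alpha(f\star_\alpha g\star_\alpha h)=B_\alpha f\star_0 B_\alpha g\star_0 B_\alpha h$, apply Theorem~\ref{thm: Plu-Ruz-0} to $B_\alpha f,B_\alpha g,B_\alpha h$, and conclude with $B_\alpha f\star_\alpha B_\alpha g\le f\star_\alpha g$ from $B_\alpha f\le f$ and monotonicity of $\star_\alpha$ (the paper first uses $\star_0\le\star_\alpha$ and applies Theorem~\ref{thm: Plu-Ruz-0} with the pair $(\alpha,\alpha)$, whereas you use $(0,\alpha)$ directly, which is equally valid).
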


We also establish a reverse Pr\'ekopa--Leindler inequality for $\alpha$-concave functions with respect to $\alpha$-sum in Theorem \ref{thm: reverse-PL_alpha}, which recovers the reverse Pr\'ekopa--Leindler inequality for log-concave functions proved by Klartag and V. Milman \cite{KM-05}.

\subsection{Functional liftings of the Rogers--Shephard inequality}

Our next observation relates to the classical Rogers--Shephard inequality \cite{RS-57}, which states that for every convex body $K$ in $\R^n$,
\begin{equation}
    \label{eq:RS-bodies}
    |K-K| \leq \binom{2n}{n} |K|.
\end{equation}
Equality holds if and only if $K$ is a simplex.
Colesanti \cite{C-06} extended this result to quasi-concave functions, proving that for any such function $f$,
\begin{equation}
    \| f \star_{-\infty} \Bar{f}\|_1 \leq \binom{2n}{n} \|f\|_1,
\end{equation}
where $\Bar{f} (x) = f(-x)$ denotes the central reflection of $f$. Equality holds if and only if each superlevel set of  $f$, $\{x : f(x) \geq t \}$, is a simplex for almost every $t$. We note that \cite{C-06} also establishes a Rogers--Shephard-type inequality for $\alpha$-concave functions, but his operation differs from the $\alpha$-sum used in this work. Alonso-Guti\'errez, Gonz\'alez-Merino, Jim\'enez and Villa~\cite{AGJV-16} proved, when $n\geq 2$, an inequality for log-concave functions with full-dimensional support under the Asplund product and showed that equality holds if and only if  $f$ is the characteristic function of a simplex.
The equality cases in dimension one differ substantially from those in higher dimensions.

\begin{restatable}{theorem}{RSAlphaSum}\label{RS-QC-inte}
    Let $\alpha \in (-\infty,0]$, and let $ f \in C_\alpha(\R^n) $ be an integrable function with full-dimensional support. Then,
    \begin{equation} \label{RS-QC-eq-intro}
	\|f \star_{\alpha} \Bar{f}\|_1 \leq \binom{2n}{n}\|f\|_1.
    \end{equation}
    For $n\geq 2$, equality holds if and only if $f$ is a characteristic function of a simplex. For $n=1$, equality holds if and only if $f $ is monotone on its support.
\end{restatable}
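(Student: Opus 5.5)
The strategy is to sandwich the $\alpha$-convolution between two known cases. Since $\alpha\le 0$, the quasi-concave (Colesanti) inequality controls the left side, and the extremizers are then read off from the equality case.

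\emph{Comparison of convolutions.} For $a,b\in[0,1]$ and $\alpha\le 0$, set $s=a^\alpha\ge1$ and $t=b^\alpha\ge1$. Then $s+t-1\ge\max(s,t)$, and raising to the power $1/\alpha<0$ reverses the inequality:
\begin{equation}
(a^\alpha+b^\alpha-1)^{1/\alpha}\le \min(a,b).
\end{equation}
Taking the supremum over $y$ gives $f\star_\alpha \bar f\le f\star_{-\infty}\bar f$ pointwise. For $\alpha=0$ the same holds via $ab\le\min(a,b)$.

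\emph{Applying Colesanti.} A function in $C_\alpha(\mathbb R^n)$ is quasi-concave, because power means decrease in the exponent. Colesanti's inequality therefore yields
\begin{equation}
\|f\star_\alpha\bar f\|_1\le\|f\star_{-\infty}\bar f\|_1\le\binom{2n}{n}\|f\|_1.
\end{equation}
This proves the inequality.

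\emph{Equality, $n\ge2$.} Equality forces equality in both steps. From Colesanti's case, almost every superlevel set $K_t=\{f\ge t\}$ is a simplex. Equality in the first step means $f\star_\alpha\bar f=f\star_{-\infty}\bar f$ almost everywhere.

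Suppose $f$ takes some value in $(0,1)$ on a set of positive measure. Near $0\in K_t-K_t$ we have $(f\star_{-\infty}\bar f)(0)=1$, and by continuity of the level structure the quasi-concave convolution exceeds $t$ on $K_t-K_t$. On the other hand, $(f\star_\alpha\bar f)(x)$ is at most $\sup_y(f(y)^\alpha+f(y-x)^\alpha-1)^{1/\alpha}$. The inequality $s+t-1\ge\max(s,t)$ is strict unless $\min(s,t)=1$, that is, unless one of the values equals $1$. So strict inequality should hold on a set of positive measure, namely for $x\in K_t-K_t$ outside $K_1-K_1$.

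To make this rigorous I would use the layer-cake formula, $\|f\star_{-\infty}\bar f\|_1=\int_0^1|K_t-K_t|\,dt$. I would show that $\{f\star_\alpha\bar f\ge t\}$ is contained in a strictly smaller set, unless $K_t=K_1$ for almost every $t$. This forces $f=1_K$, and then $K$ is a simplex.

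This equality analysis is the main obstacle. The delicate points are to handle the supremum, which need not be attained (use upper semicontinuity and compactness of the superlevel sets), and to use the full-dimensionality of the support. The case $\alpha=0$ is already covered by the result of Alonso-Guti\'errez, Gonz\'alez-Merino, Jim\'enez and Villa, which can serve as a check. The converse is immediate: for $f=1_K$, every $\alpha$-convolution equals $1_{K-K}$, and equality follows from Rogers--Shephard.

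\emph{Case $n=1$.} Here $\binom21=2$, and I would argue directly. Since $f\star_\alpha\bar f$ is even, it suffices to treat $x>0$. If $f$ is nondecreasing on its support $[a,b]$, take $y=b$ and $y-x\in[a,b]$. Then $f(y)=1$, so $(f(y)^\alpha+f(y-x)^\alpha-1)^{1/\alpha}=f(b-x)$ and $(f\star_\alpha\bar f)(x)\ge f(b-x)$. Combined with the upper bound $\min\le$, this gives $\int_0^\infty f\star_\alpha\bar f=\|f\|_1$. The same argument on the negative side yields equality, and the nonincreasing case is symmetric.

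Conversely, if $f$ increases strictly on $[a,0]$ and decreases strictly on $[0,b]$ in a nondegenerate way, then $\min(f(y),f(y-x))<1$ at the optimizing pairs. The strict inequality in the comparison step then lowers the integral, so equality fails. This is a one-dimensional compactness argument.
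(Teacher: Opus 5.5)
Your proof of the inequality is the paper's argument: $f\star_\alpha\bar f\le f\oplus\bar f$ pointwise, then the layer-cake formula, $U_t(f\oplus\bar f)=U_t(f)-U_t(f)$, and Rogers--Shephard on each level set. Your checks that simplex indicators, and monotone $f$ when $n=1$, give equality are also fine.

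The gap is the converse of the equality case for $n\ge2$, and the mechanism you propose is wrong, not just unfinished. Strictness in $s+t-1\ge\max(s,t)$ needs \emph{both} $f(y)<1$ and $f(y-x)<1$. If $x=y-z$ with $y\in K_t$ and $z\in K_1$ (or with the roles swapped), that single pair already gives $(f\star_\alpha\bar f)(x)\ge t$. So $U_t(f\star_\alpha\bar f)$ always contains $(K_t-K_1)\cup(K_1-K_t)$. A loss at level $t$ can only come from $(K_t-K_t)\setminus\bigl[(K_t-K_1)\cup(K_1-K_t)\bigr]$, not from $(K_t-K_t)\setminus(K_1-K_1)$ as you claim. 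This set can be empty for every $t$ even when every $K_t$ is a simplex different from $K_1$.

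Here is an example. In $\R^2$ let $f(x,y)=1-\frac{y}{1-x}$ on $T=\operatorname{conv}\{(0,0),(1,0),(0,1)\}$, with $f(1,0)=1$ and $f=0$ off $T$. Then $K_t=\operatorname{conv}\{(0,0),(1,0),(0,1-t)\}$ and $K_1=E=[(0,0),(1,0)]$. Apply the linear map $(x,y)\mapsto(x,y/(1-t))$: both $K_t-K_t$ and $(K_t-E)\cup(E-K_t)$ become the hexagon $\{|x|\le1,\ |y|\le1,\ |x+y|\le1\}$. Hence $f\star_\alpha\bar f=f\oplus\bar f$ and $\|f\star_\alpha\bar f\|_1=6\|f\|_1$ for every finite $\alpha$, although $f$ is not an indicator. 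This $f$ is quasi-concave, but it is constant along segments ending at $(1,0)\in K_1$, so it lies in no $C_\alpha$ with $\alpha>-\infty$. Your outline uses $f\in C_\alpha$ only through quasi-concavity and the pointwise strictness, both of which this $f$ has. So it cannot be completed, and a correct argument must use the convexity of $\operatorname{base}_\alpha f$.

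The missing idea is the paper's reduction to $\alpha=0$. Write $\varphi=\operatorname{base}_\alpha f$, so that $f^\alpha=1-\alpha\varphi$. Then $(f\star_\alpha\bar f)(x)=(f\oplus\bar f)(x)$ is equivalent to $\inf_y[\varphi(y)+\varphi(y-x)]=\inf_y\max\{\varphi(y),\varphi(y-x)\}$. This condition does not involve $\alpha$. It says exactly that $B_\alpha f\star_0\overline{B_\alpha f}=B_\alpha f\oplus\overline{B_\alpha f}$ at $x$, where $B_\alpha f=e^{-\varphi}$ is log-concave. Since $U_t(B_\alpha f)=U_{(1+\alpha\log t)^{1/\alpha}}(f)$, these level sets are simplices for a.e.\ $t$. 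So $B_\alpha f$ is an equality case of the log-concave inequality of Alonso-Guti\'errez, Gonz\'alez-Merino, Jim\'enez and Villa. Hence $B_\alpha f$ is the indicator of a simplex, and then so is $f$. The $\alpha=0$ result is the engine of the proof, not a check, and log-concavity of $B_\alpha f$ is what excludes examples like the one above.

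Your $n=1$ converse is also only a sketch, and its stated criterion is off: strictness needs $\max\{f(y),f(y-x)\}<1$, not $\min\{f(y),f(y-x)\}<1$. Optimizers with $y\in K_1$ or $y-x\in K_1$ are exactly the delicate case, for instance when $f$ has a plateau $K_1=[0,x_1]$. The paper rules them out by perturbing such an endpoint optimizer to strictly increase the minimum, using log-concavity of $B_\alpha f$.
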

We refer to \cite{A-19,AAGJV-19,AGJV-16,AHMRZ-21,C-06,FMMN-26-1,FMMN-26-2,KM-26} for the study of the Rogers--Shephard-type inequality and its functional analogues.

\subsection{Functional lifting of sum-difference inequalities}

We recall Litvak's observation (see \cite[pp. 534]{S-14} and also in \cite{FMZ-24}): for any convex bodies $A,B$, one has
\begin{equation} \label{eq:Litvak Ob}
    |A+B| \leq \frac{1}{2^n}\binom{2 n}{n} |A - B|.
\end{equation}
with equality when $A= - B$ and $A$ is a simplex. We prove some functional analogues of \eqref{eq:Litvak Ob}. In particular, we obtain sharp analogs for the $\alpha$-convolution when $\alpha$ is $-\infty$ or $0$.
\begin{restatable}{prop}{LitvakFuncQuasi}\label{p:litvak-fun}
    Let $f,g \in C_{-\infty}(\R^n)$ be integrable functions. Then
    \begin{equation}
        \| f \star_{-\infty} g\|_1 \leq \frac{1}{2^n}\binom{2 n}{n} \| f \star_{-\infty} \Bar{g}\|_1.
    \end{equation}
    Equality holds if  $f= \bar{g}$ and its superlevel sets $U_t(f)$ are simplices for almost every $t>0$.
\end{restatable}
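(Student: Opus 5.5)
The approach is the layer-cake decomposition, which reduces the claim to Litvak's inequality \eqref{eq:Litvak Ob} applied level by level. For quasi-concave functions the $(-\infty)$-convolution is
\[
(f\star_{-\infty} g)(x)=\sup_y \min\{f(y),g(x-y)\}.
\]

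\textbf{Step 1: superlevel sets of the convolution.} For $t\in(0,1]$ write $U_t(f)=\{f\ge t\}$. These are closed, since $f$ is upper semicontinuous. They are bounded, since $f\to0$ at infinity, and convex, since $f$ is quasi-concave. They are nonempty because $f(0)=1$, so each $U_t(f)$ is a compact convex set containing $0$. I claim
\[
U_t(f\star_{-\infty} g)=U_t(f)+U_t(g).
\]
The inclusion $\supseteq$ is immediate from the definition. For $\subseteq$, suppose the supremum at $x$ is at least $t$. Take a maximizing sequence $y_k$. Each $y_k$ lies in the compact set $U_{t-\varepsilon}(f)$, so by compactness and upper semicontinuity the supremum is attained. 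Hence $x\in U_t(f)+U_t(g)$.

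In the same way, $U_t(f\star_{-\infty}\bar g)=U_t(f)-U_t(g)$, since $U_t(\bar g)=-U_t(g)$.

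\textbf{Step 2: apply Litvak's inequality level by level.} By Fubini (the layer-cake formula),
\[
\|f\star_{-\infty} g\|_1=\int_0^1 |U_t(f)+U_t(g)|\,dt .
\]
Fix $t$. If $U_t(f)$ and $U_t(g)$ are convex bodies, \eqref{eq:Litvak Ob} gives
\[
|U_t(f)+U_t(g)|\le \frac{1}{2^n}\binom{2n}{n}|U_t(f)-U_t(g)|.
\]
Integrating in $t$ and applying the layer-cake formula to $f\star_{-\infty}\bar g$ yields the stated inequality.

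\textbf{Step 3: degenerate levels.} This is the only genuine obstacle: some level sets may have empty interior, so \eqref{eq:Litvak Ob} for convex bodies does not apply directly. I would extend Litvak's inequality to arbitrary compact convex sets $A,B$. Replace them by $A_\varepsilon=A+\varepsilon B_2^n$ and $B_\varepsilon=B+\varepsilon B_2^n$, which are convex bodies. Then let $\varepsilon\to0$, using continuity of volume under Hausdorff convergence of compact convex sets. When $|A-B|=0$ the inequality still holds, since then $A+B$ also lies in an affine subspace of the same dimension and so has measure zero. Measurability in $t$ is automatic, because $t\mapsto|U_t|$ is monotone. Finiteness of the right-hand side is not needed: if it is infinite, there is nothing to prove.

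\textbf{Step 4: equality.} Take $g=\bar f$ with every $U_t(f)$ a simplex. Then $U_t(g)=-U_t(f)$, so for almost every $t$ the levelwise inequality is Litvak's equality case $A=-B$ with $A$ a simplex. Concretely,
\[
|U_t(f)+U_t(g)|=|A-A|=\binom{2n}{n}|A|
\quad\text{and}\quad
|U_t(f)-U_t(g)|=|A+A|=2^n|A|,
\]
so equality holds at each such level. Integrating over $t$ gives equality in the proposition.
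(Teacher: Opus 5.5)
Your proposal is correct and follows the paper's proof: the layer-cake representation, the level-set identity $U_t(f\star_{-\infty} g)=U_t(f)+U_t(g)$, Litvak's observation applied level by level, and the equality case via $U_t(g)=-U_t(f)$ being simplices. The only differences are that you prove the level-set identity directly, whereas the paper cites it from Milman--Rotem, and you explicitly treat levels with empty interior by approximation, a point the paper passes over.
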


The simple proof is given in Section~\ref{sec:sumdiff}.

\begin{prop}
    \label{prop:litvak-fun-0}
    Let $f,g \in C_{0}(\R^n)$. Then
    \begin{equation} \label{eq:Litvak-alpha-0cw}
        \| f \star_0 g\|_1 \leq 2^n\| f \star_0 \Bar{g}\|_1.
    \end{equation}
    Moreover, the constant is sharp.
\end{prop}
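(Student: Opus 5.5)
The plan is to compare both sides with a common symmetric object, namely $\|f\star_0 \bar f\|_1$ and $\|g\star_0\bar g\|_1$. First I reduce to the case where the functions are strictly positive. Write $f=e^{-u}$ and $g=e^{-v}$ with $u,v$ convex, and recall that $f\star_0 g=e^{-(u\square v)}$ is given by infimal convolution. Let $h=f\star_0 g$ and $k=f\star_0\bar g$, so that $\bar k=\bar f\star_0 g$.

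The key pointwise inequality is
\[
(f\star_0 g)(x+y)\cdot(\bar f\star_0 \bar g)(x-y)\ \ge\ \dots,
\]
or, more naturally, the following Asplund-product analogue of the identity $(A+B)-(A+B)=(A-B)+(B-A)$. Here we have
\[
h\star_0\bar h = f\star_0 g\star_0\bar f\star_0\bar g = k\star_0\bar k,
\]
by commutativity and associativity of $\star_0$. This gives $\|h\star_0\bar h\|_1=\|k\star_0\bar k\|_1$. We then need a lower bound of $\|h\star_0\bar h\|_1$ in terms of $\|h\|_1$, and an upper bound of $\|k\star_0\bar k\|_1$ in terms of $\|k\|_1$.

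For the upper bound I use Theorem~\ref{RS-QC-inte} with $\alpha=0$, which gives $\|k\star_0\bar k\|_1\le\binom{2n}{n}\|k\|_1$. For the lower bound I use the functional Brunn--Minkowski/Prékopa--Leindler inequality for the Asplund product. Since $h(0)=1=\|h\|_\infty$, we have $(h\star_0\bar h)(2x)\ge h(x)\bar h(-x)=h(x)^2$, so $\|h\star_0\bar h\|_1\ge 2^n\int h^2$. This is only comparable to $\|h\|_1$ up to the factor $\int h^2$ versus $\int h$, which loses a $2^{-n}$. The better route is Prékopa--Leindler \eqref{eq:pli} with $\lambda=1/2$: $\int (\tfrac12\cdot h)\star(\tfrac12\cdot \bar h)\ge \|h\|_1$, and $(\tfrac12\cdot h)\star(\tfrac12\cdot\bar h)(x)=\sup_y h(2y)^{1/2}h(2y-2x)^{1/2}$. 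This is dominated by $(h\star_0\bar h)(2x)^{1/2}$, which is worse than $(h\star_0\bar h)(2x)$ since the values are at most $1$. So the estimate I would actually use is $(h\star_0\bar h)(2x)\ge \sup_y h(y+x)h(x-y)\ge h(x)^2$. The chain then reads
\[
2^n\!\int h^2\le \binom{2n}{n}\|k\|_1,
\]
which has the wrong shape, and I expect this to be the main obstacle.

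A cleaner plan avoids Rogers--Shephard entirely and gives exactly $2^n$. Pointwise,
\[
h(2x)=\sup_y f(y)g(2x-y)=\sup_z f(x+z)g(x-z),
\]
and by log-concavity $f(x+z)\le f(x+z)^{1/2}\cdot 1$ and so on. Alternatively, writing $h(2x)=\sup_z f(x+z)\bar g(z-x)$, I compare with $k(2z)=\sup_w f(w)\bar g(2z-w)$. Taking $w=x+z$ gives $\bar g(z-x)$, so $h(2x)\le k(\text{something in }z)$, but $z$ is free, so this fails without averaging. The workable route is to use $h(2x)\le \sup_z f(x+z)g(x-z)$ together with the log-concave estimate $f(x+z)g(x-z)\le [f(x+z)\bar g(z-x)]$, which gives exactly $k$ evaluated after the change of variables $(x,z)\mapsto$ the pair $(x+z,\,z-x)$. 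I would integrate over $x$ on a slice and bound $\int h = 2^n\int h(2x)\,dx$. The claim to prove is then $\int h(2x)\,dx\le\int k$, via a Fubini/Prékopa argument on the log-concave function $F(x,z)=f(x+z)g(x-z)$ in $\R^{2n}$: its marginal supremum in $z$ is compared to its supremum in $x$.

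Sharpness I would check on Gaussian-type or simplex-type examples. I would take $f=\bar g=1_{T}$ for a simplex $T$, or exponentials $f(x)=e^{-\sum x_i}1_{\R_+^n}$-type cones, where $f\star_0 \bar f$ stays small while $f\star_0 f$ grows by the factor $2^n$. For instance, with $f=g=e^{-|x|_1}$-type cone functions the ratio $\|f\star_0 f\|_1/\|f\star_0\bar f\|_1$ should tend to $2^n$.
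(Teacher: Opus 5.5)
You do not obtain the inequality with constant $2^n$. Your first chain ends, as you note yourself, in an estimate of the wrong shape, and the missing step is exactly the one the paper uses. You already have the other two ingredients: the identity $h\star_0\bar h=k\star_0\bar k$, and Pr\'ekopa--Leindler \eqref{eq:pli} with $\lambda=\tfrac12$ applied to $h$ and $\bar h$, which gives $\|h\|_1\le\|(\tfrac12\cdot h)\star_0(\tfrac12\cdot\bar h)\|_1$. You discard the latter because of the square root. But $\bigl((\tfrac12\cdot h)\star_0(\tfrac12\cdot\bar h)\bigr)(x)=\sqrt{(h\star_0\bar h)(2x)}=\Delta_0 h(x)$ is exactly Colesanti's difference function of order $0$. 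Your identity therefore gives $\Delta_0 h=\Delta_0 k$, and the argument closes with Colesanti's sharp bound $\|\Delta_0 k\|_1\le 2^n\|k\|_1$, i.e.\ $\mathcal{C}(n,0)=2^n$ in \eqref{eq:C-06-sharp}:
\[
\|h\|_1\le\|\Delta_0 h\|_1=\|\Delta_0 k\|_1\le 2^n\|k\|_1 .
\]
This is the paper's proof (the case $\alpha=0$ of Theorem~\ref{prop:litvak-fun}). The Rogers--Shephard inequality of Theorem~\ref{RS-QC-inte} cannot replace Colesanti's bound. It controls $\|k\star_0\bar k\|_1=2^n\int(k\star_0\bar k)(2x)\,\mathrm{d}x$, which is smaller than $2^n\|\Delta_0 k\|_1$ because $t\le\sqrt t$ on $[0,1]$. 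Completed in the obvious way (e.g.\ with the pointwise bound $h\le h\star_0\bar h$, which follows from $h(0)=1$), your first plan yields only $\|h\|_1\le\binom{2n}{n}\|k\|_1$, which is weaker than the claim for $n\ge 2$.

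Your ``cleaner plan'' does not supply the missing step either; it restates the claim. With $F(x,z)=f(x+z)g(x-z)$ one has $h(2x)=\sup_z F(x,z)$ and $k(2z)=\sup_x F(x,z)$. So ``$\int h(2x)\,\mathrm{d}x\le\int k$'' is literally $\|h\|_1\le 2^n\|k\|_1$ again, and the ``log-concave estimate'' $f(x+z)g(x-z)\le f(x+z)\bar g(z-x)$ is an identity. No general Fubini/Pr\'ekopa principle compares $\int\sup_z F\,\mathrm{d}x$ with $\int\sup_x F\,\mathrm{d}z$ for log-concave $F$ on $\R^{2n}$: for $F(x,z)=\phi(x)\psi(z)$ the ratio is $\|\phi\|_1/\|\psi\|_1$, which is arbitrary. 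The specific structure of $F$ must enter somewhere, and you do not say how.

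Your sharpness discussion also needs correcting. For $f=\bar g=1_T$ with $T$ a simplex, the ratio is $|T-T|/|2T|=2^{-n}\binom{2n}{n}<2^n$, so this example does not show sharpness. For the cone $f(x)=e^{-|x|_1}1_{\R^n_+}(x)$ you have the roles reversed. Here $f\star_0 f=f$, while $f\star_0\bar f=e^{-|x|_1}$ is the one that gains the factor $2^n$, so with $f=g$ the ratio is $2^{-n}$. The right choice is $g=\bar f$, which gives exact equality $\|f\star_0 g\|_1=\|f\star_0\bar f\|_1=2^n\|f\|_1=2^n\|f\star_0\bar g\|_1$ (Lemma~\ref{lem: counter-example}); no limiting procedure is needed.
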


Theorem~\ref{prop:litvak-fun} generalizes this to $\alpha\in (-1/n,0)$, but at the cost of possibly non-sharp constants.

\subsection{Functional lifting of the Ruzsa triangle inequality}

Finally, we investigate inequalities inspired by Ruzsa's triangle inequality \cite{R-96}. As a by-product, we obtain an improvement of the known upper bound for the constant in the Rogers--Shephard-type inequality with Colesanti’s operation in Proposition \ref{eq:improved-diff-alpha}.

The Euclidean form of Ruzsa's triangle inequality asserts that for any compact sets \(A,B,C\subset \mathbb{R}^n\),
\begin{equation}
\label{eq:Ruzsa-ineq-wo-A}
|A|\,|B-C| \le |A-B|\,|A-C|.
\end{equation}

We present a functional analogue of Ruzsa's inequality.

\begin{theorem} \label{thm: Ruza-triangle-fun}
    Let $f,g,h \in C_0 (\R^n)$. Then
    \[
        \|f\|_1 \|g \star_0 \Bar h\|_1 \leq 2^n \|f \star_0 \Bar g \|_1 \|f \star_0 \Bar h \|_1.
    \]
    Moreover, the constant is sharp.
\end{theorem}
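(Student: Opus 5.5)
The plan is to imitate the classical injection proof of \eqref{eq:Ruzsa-ineq-wo-A}, replacing the injection by a pointwise inequality followed by a translation-invariant change of variables. The factor $2^n$ then enters through a comparison between $\|f^2\|_1$ and $\|f\|_1$ that uses the geometric normalization $f(0)=\|f\|_\infty=1$.

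\emph{Step 1 (pointwise bound).} Write $F=f\star_0\bar g$ and $H=f\star_0\bar h$. Recall $F(y)=\sup_z f(z)g(z-y)$ and $H(y)=\sup_z f(z)h(z-y)$. Fix $x$ with $(g\star_0\bar h)(x)>0$ and $\varepsilon>0$. Choose $b=b(x)$ and $c=c(x)$ with $b-c=x$ and
\[
g(b)h(c)\ge (1-\varepsilon)(g\star_0\bar h)(x).
\]
Upper semicontinuity and the decay at infinity should let us take $\varepsilon=0$, but the $\varepsilon$ version suffices. Testing the suprema at $z=a$ gives, for every $a\in\R^n$,
\[
F(a-b)\ge f(a)g(b), \qquad H(a-c)\ge f(a)h(c).
\]
Hence
\[
F(a-b)H(a-c)\ge f(a)^2\,g(b)h(c)\ge (1-\varepsilon)f(a)^2(g\star_0\bar h)(x).
\]

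\emph{Step 2 (integration).} Integrate the bound of Step 1 in $a$ for fixed $x$. With the substitution $u=a-b(x)$, we have $a-c(x)=u+x$, so
\[
(1-\varepsilon)\|f^2\|_1\,(g\star_0\bar h)(x)\le \int F(u)H(u+x)\,du .
\]
Only a translation in $a$ is used for each fixed $x$. No measurable selection or Jacobian for $x\mapsto(b(x),c(x))$ is needed, which is the point where the set-theoretic injection argument would otherwise be delicate. Integrating in $x$ and applying Fubini gives
\[
(1-\varepsilon)\|f^2\|_1\,\|g\star_0\bar h\|_1\le \|F\|_1\|H\|_1 .
\]
Then let $\varepsilon\to0$.

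\emph{Step 3 (from $f^2$ to $f$).} Log-concavity together with $f(0)=1$ gives
\[
f(y/2)\ge f(y)^{1/2}f(0)^{1/2}=f(y)^{1/2}, \qquad\text{so } f(y/2)^2\ge f(y).
\]
Therefore
\[
\|f^2\|_1=2^{-n}\int f(y/2)^2\,dy\ge 2^{-n}\|f\|_1 .
\]
Combining this with Step 2 yields the stated inequality with constant $2^n$.

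\emph{Sharpness.} Equality in Step 3 forces $f(y/2)^2=f(y)$, i.e.\ $f=e^{-\|\cdot\|}$ for some gauge (1-homogeneous convex) function. Equality in Step 1 additionally requires $F(a-b)=f(a)g(b)$ on most of the mass. I would therefore look for extremizers in a limiting family with $f=e^{-\|x\|_K}$ and $g,h$ chosen so that the suprema defining $F$ and $H$ are nearly attained at $z=a$. The first candidates to test are $g=h=f$ and $g,h$ close to $1_{\{0\}}$ after normalizing the ratio, with $K$ possibly a simplex. I expect sharpness to be the main obstacle. The inequality itself follows cleanly from Steps 1–3, whereas exhibiting a family for which both Step 1 and Step 3 are asymptotically tight simultaneously requires explicit computation of $\|f\star_0\bar g\|_1$ for homogeneous profiles.
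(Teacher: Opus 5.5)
Your proof of the inequality itself is correct. Steps 1--2 are exactly the paper's argument, namely the proof of Theorem~\ref{thm:ruza-functional} at $\alpha=0$: the pointwise bound $F(a-b)H(a-c)\ge f(a)^2g(b)h(c)$, a translation in $a$ for fixed $x$, and Tonelli in $x$.

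Your Step 3 is a genuinely different and simpler route. The paper obtains $\|f\|_1\le 2^n\|f\|_2^2$ from Lemma~\ref{FLM-20-formulate}, i.e.\ from the R\'enyi-entropy comparison of Fradelizi--Li--Madiman, because it needs the sharp factor $\prod_{j}\frac{2+j\alpha}{1+j\alpha}$ for every $\alpha\in(-\frac1n,0]$. Your dilation bound $f(y/2)^2\ge f(y)$ settles the log-concave case in one line. It does not extend to $\alpha<0$: there midpoint concavity only gives $f(y/2)\ge M_\alpha(f(y),1)$, and $M_\alpha(f(y),1)^2\le f(y)$, so the comparison no longer closes.

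The gap is the second assertion of the statement. You never prove that $2^n$ is sharp, and the candidates you list do not work. Since $g(0)=1$, one has $f\star_0\bar g\ge f$ for every $g\in C_0(\R^n)$. Consequently:
\begin{itemize}
\item for $g=h=f$, the ratio of the left-hand side to the right-hand side is $\|f\|_1/(2^n\|f\star_0\bar f\|_1)\le 2^{-n}$;
\item for $g,h$ concentrating at the origin, the ratio is at most $\|g\star_0\bar h\|_1/(2^n\|f\|_1)\to0$.
\end{itemize}

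The missing idea is to take $g,h$ to be \emph{reflections} of $f$. Let $f(x)=e^{-|x|_1}1_{\R^n_+}(x)$ and $g=h=\bar f$.
\begin{itemize}
\item Since $|y|_1+|z|_1=|y+z|_1$ on $\R^n_+$, we get $f\star_0\bar g=f\star_0\bar h=f\star_0 f=f$.
\item For $g\star_0\bar h=\bar f\star_0 f$, the triangle inequality gives $|y|_1+|z|_1\ge|x|_1$. Equality holds for the coordinatewise splitting $x=\min(x,0)+\max(x,0)$, so $\bar f\star_0 f=e^{-|x|_1}$.
\item Hence $\|g\star_0\bar h\|_1=2^n=2^n\|f\|_1$, and both sides equal $2^n\|f\|_1^2$.
\end{itemize}
This is the paper's example (Lemma~\ref{lem: counter-example} and the remark after Theorem~\ref{thm:ruza-functional}). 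It has the form $e^{-\|\cdot\|_K}$ you anticipated, with $K=\mathrm{conv}\{0,e_1,\dots,e_n\}$ a simplex having the origin at a vertex. Both your Step 2 and your Step 3 hold with equality for it.
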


Theorem~\ref{thm:ruza-functional} extends this inequality to $\alpha\in (-\frac{1}{n},0)$, but with possibly non-sharp constants. We also establish a functional version of \eqref{eq:ruzvol1} in Corollary \ref{cor:ruzvol1-func} as a consequence of Theorem \ref{thm: Ruza-triangle-fun} and Theorem~\ref{prop:litvak-fun}.

\par\vspace{.2in}
\noindent{\bf Techniques.}
Our main approach is the development of sharp inequalities for the base transform associated with $\alpha$-concave functions, which we combine with the moment-comparison method (as used, for example, by Fradelizi, Li, and the first-named author \cite{FLM-20}). The latter method reduces integral estimates for $\alpha$-concave functions to comparisons between normalized one-dimensional moments of their level-set profiles, and the base inequalities allow us to apply this framework to functional convolution estimates.

\par\vspace{.2in}
\noindent{\bf Outline of the paper.}
The paper is organized as follows. Section~\ref{previous-results} contains notations and known results. Section~\ref{our-main-results} is devoted to establishing Theorem \ref{thm: Plu-Ruz-0} and \ref{thm: Plu-Ruz-alpha}, the Pl\"unnecke--Ruzsa-type inequalities, and also the reverse Pr\'ekopa--Leindler-type inequality in Theorem~\ref{thm: reverse-PL_alpha}. In Section~\ref{sec:RS}, we prove Theorem \ref{RS-QC-inte} with the equality characterization for all $n$. Section~\ref{sec:sumdiff} contains the functional inequality of sum-difference inequalities, Theorem \ref{prop:litvak-fun}, Proposition \ref{eq:improved-diff-alpha}, and Proposition \ref{p:litvak-fun}. Section~\ref{sec:triangle} is dedicated to the functional inequality of Ruzsa triangle inequality, Theorem \ref{thm: Ruza-triangle-fun}. Finally, the Appendix \ref{appendixtech} collects the approximations used throughout the paper.

\section{Notations and known results}\label{previous-results}

We refer to \cite{S-14} for standard definitions and facts used. For any $x \in \R^n$, $|x|_p$ denotes its $\ell_p$-norm. A convex body is defined to be a convex, compact set with a nonempty interior.
We write $ |K| $ for the $n$-dimensional Lebesgue measure (volume) of a measurable set $K\subset \R^n$. For any sets $K$ and $L$, the Minkowski sum of $K $ and $ L$ is defined as $K+L = \{ x+y : x\in K, y \in L \}$.

\subsection{\texorpdfstring{$\alpha$}{α}-concave functions} \label{section: alpha-concave}

We repeat the definition of $\alpha$-concavity for convenience.
Fix \( \alpha \in [-\infty, \infty] \). We say that a function \( f : \mathbb{R}^n \rightarrow [0, \infty ) \) is \( \alpha \)-concave if \( f \) is supported on a convex set \( \Omega \subset \mathbb{R}^n \), and for all \( x, y \in \Omega \) and \( \lambda \in [0,1] \), we have
\begin{equation} \label{alp-conc}
    f(\lambda x + (1 - \lambda)y)
    \geq \left[\lambda f(x)^\alpha + (1 - \lambda)f(y)^\alpha\right]^{1/\alpha}.
\end{equation}
For $ \alpha\in\{-\infty, 0, \infty\}$, the above inequality is understood in the limiting sense. Thus, when $\alpha = 0$, the condition becomes
\begin{equation}
    f(\lambda x + (1 - \lambda)y)
    \geq f(x)^\lambda f(y)^{1 - \lambda},
\end{equation}
and when $\alpha = -\infty$, it becomes
\begin{equation}
    f(\lambda x + (1 - \lambda)y)
    \geq \min \left\{ f(x) ,  f(y) \right\}.
\end{equation}
We recall Definition \ref{def:C_alpha}.
\begin{defn}
For $-\infty \leq \alpha \leq 0$, we write
\begin{equation*}
C_\alpha(\mathbb{R}^n)=
\left\{f: \mathbb{R}^n\to \mathbb{R}_+  \Bigg|\begin{aligned}
\,\,\, & f \text{ is upper semicontinuous and $\alpha$-concave}  , \\
&\|f\|_{\infty}=f(0)=1, \lim_{|x| \to \infty}  f(x) = 0
\end{aligned}\right\}.
\end{equation*}
\end{defn}
If \( \alpha_1 < \alpha_2 \), then \( C_{\alpha_1}(\mathbb{R}^n) \supset C_{\alpha_2}(\mathbb{R}^n) \). Thus, the class \( C_\alpha(\mathbb{R}^n) \) can be viewed as an extension of the class \( C_0(\mathbb{R}^n) \) of log-concave functions.
In some papers, these functions are referred to as  $s$-concave functions. The discussion and many results on $ \alpha$-concave functions can be found in \cite{R-13,MR-12, MR-13,B-10,R-14}.

The natural correspondence between log-concave functions and convex functions is as follows: for any log-concave function \( f \),
the function \( \varphi = -\log f \) is convex.
V.~Milman and Rotem~\cite{MR-13} extended this correspondence to \( \alpha \)-concave functions: Fix $ \alpha \in (-\infty, 0) $. The convex base of \( f \) is
\[
    \operatorname{base}_\alpha f = \frac{1 - f^\alpha}{\alpha},
\]
where the value  is $+ \infty$ at points at which $ f = 0$. For $\alpha = 0$, we set $\operatorname{base}_0(f) = - \log f$.
With our assumption, $\operatorname{base}_\alpha(f)$ is a convex, lower semicontinuous function with minimum 0 at the origin, and $\operatorname{base}_\alpha(f) \rightarrow \infty$ as $|x| \rightarrow \infty$.
We recall the infimal convolution of convex functions $\varphi, \psi$,
\[
    (\varphi \mathbin\square \psi)(x):=\inf _{y+z=x}\{\varphi(y)+\psi(z)\}.
\]
The \( \alpha \)-sum of $ f $ and $ g $ is defined by
\begin{equation}
    \label{def:alpha-sum-original}
    (\operatorname{base}_\alpha\left(f \star_\alpha g\right)) (x) :=  (\operatorname{base}_\alpha f \mathbin\square \operatorname{base}_\alpha g) (x) .
\end{equation}
Note that for the characteristic function of convex sets $K$ and $L$, we have $1_K \star_\alpha 1_L = 1_{K+L}$ for all $\alpha$.
Moreover, for any \( -\infty < \alpha < 0 \), the \( \alpha \)-sum can be written explicitly, without referring to convex base functions:
\begin{equation}
\label{eq:explicit-form-some-alpha-sum}
    (f \star_\alpha g) (x) = \sup_{ y +z =x } (f(y)^\alpha + g(z)^\alpha -1)^{1/\alpha},
\end{equation}
where the expression is understood to be zero whenever $f(y)=0$ or $g(z)=0$.
For $\alpha=0 $ and $\alpha = -\infty$, the operation is defined by the corresponding limit. In particular, the $0 $-sum is the Asplund product (or sup-convolution),
\begin{equation}
    \label{def:asplund-sum}
    (f \star_0 g) (x) = \sup_{y+z = x} f(y) g(z).
\end{equation}
The $(-\infty)$-sum, also called the quasi-sum, is
\[
    (f \star_{-\infty} g) (x) = \sup_{y+z = x} \min \{ f(y), g(z) \}.
\]
Notice that $C_\alpha(\mathbb{R}^n)$ is closed under the $\alpha$-sum as well as under the quasi-sum.
Since the quasi-sum plays a distinguished role in the theory of mixed integrals for quasi-concave functions \cite{MR-12}, it is typically denoted by a special notation $\oplus$ instead of $\star_{-\infty}$. The following level-set identity is a key tool to establish the mixed integral. Recall that for a function $f$ and $t\in[0,1]$, we define the superlevel set
$
    U_t(f):=\{x\in\mathbb{R}^n:\ f(x)\ge t\}.
$
It was proved in \cite[Theorem~8]{MR-12} that for any $f,g\in C_{-\infty}(\mathbb{R}^n)$ and any $t>0$,
\begin{equation} \label{sum-Uf}
    U_t(f \oplus g) = U_t(f) +U_t(g).
\end{equation}
Under our assumption, $U_t(f)$ is convex and compact for every $t >0$ if and only if $f \in C_{-\infty} (\R^n)$.
The following identity is known as the layer-cake representation:
\begin{equation} \label{mix-integral-for}
    \|f\|_1 = \int_0^1 |U_t(f)| \;\mathrm d t.
\end{equation}

\section{Pl\"unnecke--Ruzsa-type inequality for functions} \label{our-main-results}

\subsection{Preliminaries}
We begin with an auxiliary lemma that will be used in this section as well as in Section~\ref{sec:RS}, and it is known as Karamata's inequality \cite{K-32}.

\begin{lemma}
    \label{l:Karamata}
    Let $x,y \geq 1$ and let $\varphi$ be convex on $[0,\infty)$. Then
    \begin{equation}
        \label{eq:Karamata}
        \varphi(x) +\varphi(y) \leq \varphi(1) + \varphi(x+y-1).
    \end{equation}
\end{lemma}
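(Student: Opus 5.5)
The plan is to reduce the statement to the elementary fact that the slopes of a convex function are nondecreasing, i.e.\ that its difference quotients increase in each endpoint. By symmetry of \eqref{eq:Karamata} in $x$ and $y$, we may assume $1\le x\le y$, and we set $z:=x+y-1$. Then $z\ge y$, since $x\ge 1$, and so the four points satisfy
\[
1\le x\le y\le z,\qquad x-1=z-y .
\]
If $x=1$, then $z=y$ and \eqref{eq:Karamata} is an equality, so we may assume $x>1$. In that case $h:=x-1=z-y>0$.

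The inequality \eqref{eq:Karamata} is equivalent to
\[
\varphi(x)-\varphi(1)\le \varphi(z)-\varphi(y).
\]
After dividing by $h>0$, this becomes a comparison of the slopes of $\varphi$ over the two intervals $[1,x]$ and $[y,z]$, which have the same length $h$. Convexity of $\varphi$ on $[0,\infty)$ means that the difference quotient
\[
s(a,b)=\frac{\varphi(b)-\varphi(a)}{b-a}
\]
is nondecreasing in each of $a$ and $b$. Since $1\le y$ and $x\le z$, applying this monotonicity twice gives
\[
s(1,x)\le s(1,z)\le s(y,z),
\]
or we can use $s(1,x)\le s(x,y)\le s(y,z)$ when $x<y$. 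Multiplying through by $h$ then yields the claim.

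An alternative route that avoids slope monotonicity is to write $x$ and $y$ as convex combinations of the endpoints $1$ and $z$: set $x=\lambda\cdot 1+(1-\lambda)z$ and $y=(1-\lambda)\cdot 1+\lambda z$, with $\lambda=\frac{z-x}{z-1}$. This works because $x+y=1+z$. Applying Jensen's inequality at the points $x$ and $y$ and adding the two inequalities gives $\varphi(x)+\varphi(y)\le \varphi(1)+\varphi(z)$ directly. The only point needing care is the degenerate case $z=1$, which forces $x=y=1$ and is trivial. I expect no real obstacle here; the only care needed is handling the degenerate cases $x=1$ or $z=1$ before dividing.
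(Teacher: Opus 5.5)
Your proposal is correct, and your alternative route is exactly the paper's proof. The paper takes $\lambda=\frac{y-1}{x+y-2}$, which equals your $\frac{z-x}{z-1}$, writes $x$ and $y$ as the complementary convex combinations of $1$ and $x+y-1$, and adds the two convexity inequalities. The only difference is that it sets aside $x=1$ or $y=1$ so that $\lambda\in(0,1)$, whereas you exclude only $z=1$.

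Your primary argument via monotone difference quotients is also valid, and is essentially an equivalent repackaging. It reads the inequality as saying that the increments of $\varphi$ over the equal-length intervals $[1,x]$ and $[y,z]$ grow as the interval moves to the right, which makes the majorization behind Karamata's inequality transparent. The paper's version instead works directly from the definition of convexity, without invoking the three-slope characterization.
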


\begin{proof}
    Assume that $x,y \neq 1$; otherwise, the inequality is trivial. Using the convexity with $\lambda = \frac{y- 1}{x+y-2} \in (0,1)$, we have
    \[
        \varphi(x) = \varphi (\lambda  +(1-\lambda)(x+y-1)) \leq \lambda \varphi(1) + (1-\lambda) \varphi(x+y -1).
    \]
    Similarly, we obtain
    \[
        \varphi(y) \leq (1-\lambda) \varphi(1) + \lambda \varphi(x+y -1).
    \]
    Adding the two inequalities proves the desired inequality.
\end{proof}
Using \eqref{eq:explicit-form-some-alpha-sum} and Lemma~\ref{l:Karamata} with the function $t \mapsto t^s$ for any $s >1$, we obtain the following monotonicity property of the $\alpha$-sum of $f$ and $g$.
\begin{prop} \label{prop:monotonicity}
    For any $f,g \in C_{\beta}(\R^n)$ and for any $-\infty\leq\alpha<\beta\leq 0$, we have
    \begin{equation} \label{monotonicity}
        f \star_\alpha g \geq f \star_\beta g.
    \end{equation}
\end{prop}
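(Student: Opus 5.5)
Fix $x\in\R^n$. Because the $\alpha$-sum and the $\beta$-sum are both suprema over decompositions $x=y+z$, it is enough to prove the pointwise inequality
\[
\bigl(f(y)^\alpha+g(z)^\alpha-1\bigr)^{1/\alpha}\ \geq\ \bigl(f(y)^\beta+g(z)^\beta-1\bigr)^{1/\beta}
\]
for each such pair with $f(y),g(z)>0$. If $f(y)=0$ or $g(z)=0$, both sides are $0$ by convention. Taking the supremum over $y+z=x$ then gives \eqref{monotonicity}.

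I will first handle $-\infty<\alpha<\beta<0$. Write $a=f(y)$ and $b=g(z)$, both in $(0,1]$ since $\|f\|_\infty=\|g\|_\infty=1$. Put $u=a^\beta\geq1$ and $v=b^\beta\geq1$, and let $s=\alpha/\beta>1$. Then $a^\alpha=u^s$ and $b^\alpha=v^s$. Apply Lemma~\ref{l:Karamata} to the convex function $\varphi(t)=t^s$ on $[0,\infty)$:
\[
u^s+v^s-1\leq (u+v-1)^s .
\]
Both sides are at least $1$. Raise them to the power $1/\alpha<0$, which reverses the inequality:
\[
(a^\alpha+b^\alpha-1)^{1/\alpha}\geq (u+v-1)^{s/\alpha}=(a^\beta+b^\beta-1)^{1/\beta},
\]
since $s/\alpha=1/\beta$. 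This is the pointwise claim.

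The endpoint cases follow by passing to limits in the pointwise expressions. For $\beta=0$, the term $(a^\alpha+b^\alpha-1)^{1/\alpha}$ tends to $ab$ as $\alpha\to0^-$, and the monotonicity in the exponent just proved gives $(a^\alpha+b^\alpha-1)^{1/\alpha}\geq ab$ for every $\alpha<0$. One can also see this directly: $(ab)^\alpha-a^\alpha-b^\alpha+1=(a^\alpha-1)(b^\alpha-1)\geq0$. For $\alpha=-\infty$, the expression tends to $\min\{a,b\}$, and indeed $(a^\beta+b^\beta-1)^{1/\beta}\leq\min\{a,b\}$ because $a^\beta+b^\beta-1\geq\max\{a^\beta,b^\beta\}$. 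The step I expect to need the most care is keeping the direction of the inequality right under negative powers, together with these limiting conventions. The core inequality itself is exactly Karamata's inequality.
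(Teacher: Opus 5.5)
Your proof is correct and follows essentially the paper's argument: Karamata's inequality (Lemma~\ref{l:Karamata}) with $t\mapsto t^{\alpha/\beta}$ for $-\infty<\alpha<\beta<0$, and for the endpoints the same elementary facts $(a^\alpha-1)(b^\alpha-1)\ge 0$ and $a^\beta+b^\beta-1\ge\max\{a^\beta,b^\beta\}$. The one case you leave implicit, $\alpha=-\infty$ with $\beta=0$, is just $\min\{a,b\}\ge ab$ for $a,b\in[0,1]$; it also follows by chaining your two endpoint inequalities through any fixed $\beta<0$.
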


\begin{proof}
    Fix \(x\in\mathbb R^n\). By the definition, \(f\) and \(g\) take values in \([0,1]\). It is enough to consider \(y,z\in\mathbb R^n\) with \(y+z=x\) and \(f(y),g(z)>0\).
    We prove the desired inequality pointwise for each such pair $(y, z)$, and then take the supremum over $y,z$ such that $x=y+z$.
\begin{enumerate}[wide =0pt, labelwidth= .9cm, leftmargin=1.55cm, label=\textbf{Case \arabic*}:]
        \item Suppose that \( -\infty<\alpha<\beta<0.\)  Since $ \frac{\alpha}{\beta} \geq 1 $, the function $t \mapsto t^{\alpha / \beta}$ is convex.
    Applying Lemma \ref{l:Karamata}, we obtain
    \[
        f(y)^\alpha + g(z)^\alpha \leq 1 + (f(y)^\beta + g(z)^\beta - 1)^{\alpha / \beta}.
    \]
    Subtracting $1$ from both sides, and then raising both sides to the power of ${1} / {\alpha} <0$, we get
    \[
        \left(f(y)^\alpha + g(z)^\alpha - 1\right)^{1/\alpha} \geq (f(y)^\beta + g(z)^\beta - 1)^{1 / \beta}.
    \]
    \item Assume that $\beta =0$ and $-\infty <\alpha<0$. Since
    \[
        0 \leq (f(y)^\alpha -1)(g(z)^\alpha -  1 ) = f(y)^\alpha g(z)^\alpha - f(y)^\alpha - g(z)^\alpha + 1 ,
    \]
    we obtain
    \[
        f(y)^\alpha + g(z)^\alpha - 1 \leq f(y)^\alpha g(z)^\alpha.
    \]
    Raising to the power of ${1} / {\alpha} <0$ gives the pointwise inequality.
    \item Suppose that $\beta <0,$ and $\alpha = -\infty $. Without loss of generality, assume that $f(y) \leq g(z) .$ Thus,
    \[
        (\min \{ f(y), g(z)\})^\beta = f(y)^\beta \leq  f(y)^\beta+g(z)^\beta-1 .
    \]
    Raising to the power of ${1} / {\beta} <0$, we obtain the pointwise inequality.
    \item Finally, if $\alpha=-\infty$ and $\beta=0$, then, because $f(y), g(z) \in[0,1]$,
    \[
        \min \{f(y), g(z)\} \geq f(y) g(z).
    \]
    \end{enumerate}
\end{proof}

\begin{lemma} \label{lem:monotonicity-V}
    Let $\varphi: \mathbb{R}^n \rightarrow[0, \infty]$ be convex with $\varphi(0)=0$.  Then, the function
    \[
        t \mapsto \frac{\left|\{x: \varphi(x) \leq t\}\right|}{t^n}
    \]
    is non-increasing.
\end{lemma}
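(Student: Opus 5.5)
The plan is to compare the sublevel sets $K_t=\{x:\varphi(x)\le t\}$ at two levels by a homothety centered at the origin, using only convexity of $\varphi$ and $\varphi(0)=0$. Fix $0<s<t$ and set $\lambda=s/t\in(0,1)$. The goal is the inclusion
\[
    \lambda K_t \subset K_s .
\]
Once this holds, homogeneity of Lebesgue measure gives $\lambda^n|K_t|\le|K_s|$. Substituting $\lambda=s/t$ and dividing by $s^n$ yields $|K_t|/t^n\le|K_s|/s^n$, which is exactly the claimed monotonicity.

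To prove the inclusion, take $x\in K_t$, so $\varphi(x)\le t<\infty$. Write $\lambda x=\lambda x+(1-\lambda)\cdot 0$. Convexity of $\varphi$ on $\mathbb{R}^n$, with values in $[0,\infty]$, then gives
\[
    \varphi(\lambda x)\le \lambda\varphi(x)+(1-\lambda)\varphi(0)=\lambda\varphi(x)\le \lambda t=s .
\]
Hence $\lambda x\in K_s$. Here $\varphi(0)=0$ is used exactly once, and the value $+\infty$ causes no trouble because both $\varphi(x)$ and $\varphi(0)$ are finite.

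The only step needing care is the degenerate cases, and I do not expect real difficulty there. If $|K_t|=\infty$ for some $t$, the displayed measure inequality still holds in $[0,\infty]$, so monotonicity is interpreted in the extended sense. In the application $\varphi=\operatorname{base}_\alpha f$, which tends to $\infty$ at infinity, so every $K_t$ is compact and all the quantities are finite. Measurability is automatic, since each $K_t$ is convex, and convex sets are Lebesgue measurable.
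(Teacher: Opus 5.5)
Your proposal is correct and follows the same argument as the paper. Both use the homothety inclusion $\frac{s}{t}\{x:\varphi(x)\le t\}\subset\{x:\varphi(x)\le s\}$, obtained from convexity and $\varphi(0)=0$, and then take volumes; your remarks on measurability and the extended-valued case are accurate details the paper leaves implicit.
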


\begin{proof}
    Fix $0< s<t$. Let $x \in \{ x: \varphi (x)\leq t\}$. Using the convexity of $\varphi$, we obtain
    $$
        \varphi\left(\frac{s}{t} x\right) \leq \frac{s}{t} \varphi(x)+\left(1-\frac{s}{t}\right) \varphi(0) \leq s.
    $$
    Thus,
    $$
        \frac{s}{t} \{ x: \varphi (x)\leq t\} \subset \{ x: \varphi (x)\leq s\}.
    $$
    Taking volumes yields
$$
\frac{|\{x: \varphi(x) \leq t\}|}{t^n} \leq \frac{|\{x: \varphi(x) \leq s\}|}{s^n} .
$$
Hence, the function is non-increasing on $( 0, \infty )$.
\end{proof}

\begin{lemma} \label{lem:sublevel-set-representation}
    Let $\varphi: \mathbb{R}^n \rightarrow[0, \infty]$ be measurable. Let $f:[0, \infty) \rightarrow[0, \infty)$ be locally absolutely continuous and non-increasing, with
    $
    \displaystyle \lim_{t \rightarrow \infty} f(t)=0 .
    $
    Then,
    $$
    \int_{\mathbb{R}^n} f(\varphi(x)) \;\mathrm d x=\int_0^{\infty}|\{x: \varphi(x) \leq t\}|\left(-f^{\prime}(t)\right) \;\mathrm d t,
    $$
    where the equality is understood in $[0, \infty]$.
\end{lemma}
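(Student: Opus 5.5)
The natural route is Tonelli, applied after writing $f(\varphi(x))$ as an integral of $-f'$ over the tail $(\varphi(x),\infty)$. Since $f$ is locally absolutely continuous and non-increasing, $f'$ exists a.e. on $[0,\infty)$ and $-f'\ge 0$ a.e. For every $s\ge 0$ and every $T>s$, absolute continuity on $[s,T]$ gives
\[
f(s)-f(T)=\int_s^T\bigl(-f'(t)\bigr)\,\mathrm dt .
\]
Letting $T\to\infty$ and using $\lim_{t\to\infty}f(t)=0$ together with monotone convergence (the integrand is nonnegative), we get
\[
f(s)=\int_s^\infty\bigl(-f'(t)\bigr)\,\mathrm dt=\int_0^\infty \mathbf 1_{\{s\le t\}}\bigl(-f'(t)\bigr)\,\mathrm dt .
\]

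Next substitute $s=\varphi(x)$ for each $x$ with $\varphi(x)<\infty$. At points where $\varphi(x)=+\infty$, the left side $f(\varphi(x))$ has to be read as $\lim_{t\to\infty}f(t)=0$, which is the natural convention and is consistent with how the lemma is used when $f=0$ off the support. With that convention,
\[
f(\varphi(x))=\int_0^\infty \mathbf 1_{\{\varphi(x)\le t\}}\bigl(-f'(t)\bigr)\,\mathrm dt
\]
holds for all $x$, both sides being $0$ when $\varphi(x)=\infty$.

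Now integrate over $x\in\mathbb R^n$. The function $(x,t)\mapsto \mathbf 1_{\{\varphi(x)\le t\}}(-f'(t))$ is nonnegative and jointly measurable: the set $\{(x,t):\varphi(x)\le t\}$ is measurable in $\mathbb R^n\times[0,\infty)$ because $\varphi$ is measurable, and $f'$ can be taken Borel after modification on a null set. Tonelli's theorem then lets us exchange the order of integration. Since the inner integral is $\int_{\mathbb R^n}\mathbf 1_{\{\varphi(x)\le t\}}\,\mathrm dx=|\{x:\varphi(x)\le t\}|$, this gives the identity in $[0,\infty]$.

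The only delicate points are measure-theoretic: the improper tail integral (handled by monotone convergence), joint measurability of the sublevel-set indicator, and the convention at $\varphi=\infty$. None of them needs any convexity, consistent with the hypotheses of the lemma.
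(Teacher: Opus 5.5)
Your proposal is correct and follows the paper's own argument: write $f(u)=\int_u^\infty(-f'(t))\,\mathrm dt$ using $\lim_{t\to\infty}f(t)=0$, substitute $u=\varphi(x)$, and exchange the order of integration with Tonelli's theorem. The extra care you take with the improper tail integral, the convention $f(\varphi(x))=0$ where $\varphi(x)=\infty$, and joint measurability fills in details the paper leaves implicit.
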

\begin{proof}
    Since $\displaystyle \lim_{t \rightarrow \infty} f(t)=0 ,$ we have
    $$
    f(u)=\int_u^{\infty}-f^{\prime}(t) \;\mathrm d t, \qquad u \geq 0.
    $$
    The integrand is non-negative because $f$ is non-increasing. Applying Tonelli's theorem, we obtain
    \begin{align}
        \int_{\mathbb{R}^n} f(\varphi(x)) \;\mathrm d x
         &=\int_{\mathbb{R}^n} \int_{\varphi(x)}^{\infty} -f^{\prime}(t)\;\mathrm d t \;\mathrm d x
         =\int_0^{\infty} \int_{\{\varphi \leq t\}} -f^{\prime}(t) \;\mathrm d x \;\mathrm d t
         \\
         &=\int_0^{\infty}|\{x: \varphi(x) \leq t\}|\left(-f^{\prime}(t)\right) \;\mathrm d t .
    \end{align}
\end{proof}
For probability measures $\mu$ and $\nu$ on $\mathbb R$, we say that $\mu$ is smaller than $\nu$ in the stochastic order, and write $\mu \preceq_{\text {st }} \nu$
if
\[
\mu((t, \infty)) \leq \nu((t, \infty)) \quad \text { for every } t \in \mathbb{R} .
\]
The following lemmas are standard consequences of the characterization of the usual stochastic order \cite[Section 1.A.1]{SS-07}.
\begin{lemma} \label{lem:measures-comparison}
    Let $\mu_1$ and $\mu_2$ be probability measures on $(0, \infty)$ with densities $p_1$ and $p_2$, respectively. Suppose there exists $c>0$ such that
    $$
    p_1(t) \leq p_2(t) \quad \text { for a.e. } t \in(0, c),
    \quad \text{and} \quad
    p_1(t) \geq p_2(t) \quad \text { for a.e. } t \in(c, \infty) .
    $$
    Then, $\mu_2 \preceq_{st} \mu_1$.
\end{lemma}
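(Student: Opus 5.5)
We prove the statement through the tail functions $\bar F_i(t) := \mu_i((t,\infty)) = \int_t^\infty p_i(s)\,\mathrm ds$ for $i=1,2$. By the definition of the stochastic order, it suffices to show that $\bar F_2(t) \le \bar F_1(t)$ for every $t \in \R$. For $t \le 0$ both tails equal $1$, since both measures live on $(0,\infty)$, so the inequality is trivial there. It remains to treat $t>0$, and we split into the cases $t \ge c$ and $0<t<c$.

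\textbf{Case $t \ge c$.} Here we integrate the a.e.\ inequality $p_1 \ge p_2$ over $(t,\infty) \subset (c,\infty)$. This gives
\[
\bar F_1(t) = \int_t^\infty p_1 \ge \int_t^\infty p_2 = \bar F_2(t)
\]
directly.

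\textbf{Case $0<t<c$.} Here we use complements and the fact that both densities have total mass $1$. We write
\[
\bar F_i(t) = 1 - \int_0^t p_i(s)\,\mathrm ds .
\]
Since $p_1 \le p_2$ a.e.\ on $(0,t) \subset (0,c)$, we get $\int_0^t p_1 \le \int_0^t p_2$. Hence $\bar F_1(t) \ge \bar F_2(t)$.

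Combining the two cases gives $\mu_2((t,\infty)) \le \mu_1((t,\infty))$ for all $t$, that is, $\mu_2 \preceq_{st} \mu_1$. There is no real obstacle here: this is the single-crossing criterion for stochastic dominance. The only point needing care is the use of normalization, $\int p_1 = \int p_2 = 1$, to pass to the complementary integrals below $c$.
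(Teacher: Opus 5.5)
Your proof is correct and complete: you compare the tails directly on $[c,\infty)$, and on $(0,c)$ via the complements $1-\int_0^t p_i$ using that both densities have total mass one, which is the standard single-crossing argument. The paper gives no proof of its own and only cites the characterization of the usual stochastic order in \cite[Section 1.A.1]{SS-07}, so your write-up is a self-contained version of what that citation supplies.
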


\begin{lemma} \label{lem:measures-comparison-non-increasing}
    Let $\mu_1$ and $\mu_2$ be probability measures on $(0,\infty)$ with $\mu_2 \preceq_{st} \mu_1$. Then, for every measurable, non-increasing function $V:(0, \infty) \rightarrow[0, \infty]$, we have
    \begin{equation}
    \int_{(0, \infty)} V \;\mathrm d \mu_1 \leq \int_{(0, \infty)} V \;\mathrm d \mu_2,
    \end{equation}
    where the integrals are understood in $[0, \infty]$.
\end{lemma}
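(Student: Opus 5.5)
The plan is to express both integrals through tail probabilities and then push the monotone function $V$ past them with Fubini--Tonelli. Since $V$ is non-increasing and non-negative, I will use its layer-cake form:
\[
V(t)=\int_0^\infty 1_{\{V(t)>s\}}\,\mathrm d s .
\]
For each fixed $s\ge 0$, monotonicity of $V$ makes the superlevel set $E_s=\{t\in(0,\infty): V(t)>s\}$ a down-set of $(0,\infty)$. So $E_s$ is either empty or an interval $(0,a_s)$ or $(0,a_s]$ with $a_s\in(0,\infty]$.

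Next I compare the two measures on each such set. The stochastic order $\mu_2\preceq_{st}\mu_1$ says $\mu_2((t,\infty))\le \mu_1((t,\infty))$ for every $t$. Taking complements (both are probability measures on $(0,\infty)$) gives
\[
\mu_1((0,t])\le \mu_2((0,t]) \quad \text{for all } t,
\]
which handles the case $E_s=(0,a_s]$ directly. For the open interval $(0,a_s)$, I write it as the increasing union of $(0,a_s-1/k]$ and pass to the limit by continuity from below. The full-line case $a_s=\infty$ is trivial, since both measures give mass $1$. In every case this yields $\mu_1(E_s)\le \mu_2(E_s)$.

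Finally, Tonelli (everything is non-negative, and values in $[0,\infty]$ are allowed) gives
\[
\int V\,\mathrm d\mu_1=\int_0^\infty \mu_1(E_s)\,\mathrm d s\le \int_0^\infty \mu_2(E_s)\,\mathrm d s=\int V\,\mathrm d\mu_2 .
\]
Measurability of $(t,s)\mapsto 1_{\{V(t)>s\}}$ follows from measurability of $V$. The only mildly delicate point is the open-versus-closed endpoint of $E_s$ in the previous step, and the limiting argument there takes care of it. An alternative is to approximate $V$ by non-increasing simple functions $\sum c_i 1_{(0,a_i]}$ and use monotone convergence, but the layer-cake route avoids that approximation.
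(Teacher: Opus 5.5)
Your argument is correct and complete. The superlevel sets $E_s=\{V>s\}$ are down-sets of $(0,\infty)$. Complementation turns the hypothesis into $\mu_1((0,t])\le\mu_2((0,t])$ for every $t>0$, and continuity from below extends this to the open intervals $(0,a)$. Tonelli then finishes the proof. No integrability assumptions are needed because everything is non-negative, and the layer-cake formula remains valid at points where $V=+\infty$.

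The paper gives no proof of this lemma. It only cites the characterization of the usual stochastic order in Shaked--Shanthikumar \cite[Section 1.A.1]{SS-07}, where the standard route is a coupling. Let $F_i$ be the distribution function of $\mu_i$ and $U$ uniform on $(0,1)$. The quantile variables $X_i=F_i^{-1}(U)$ have laws $\mu_i$, and $F_2\ge F_1$ forces $X_2\le X_1$ pointwise. Hence $V(X_1)\le V(X_2)$ pointwise, and one simply takes expectations.

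The coupling argument is shorter once that theorem is available, and it gives a pointwise comparison. Your layer-cake argument avoids constructing quantile functions and reduces everything to testing the order on the sets $(0,a]$, which is exactly the hypothesis. So your proof is fully self-contained, which suits a paper that needs only this one consequence of the stochastic order.
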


\subsection{Log-concave functions}
        Recall the lower incomplete gamma function
        \[
            \gamma(t) = \gamma(n+1, t) = \int_0^t u^{n} e^{-u} \;\mathrm d u.
        \]
\begin{lemma} \label{lem:stoc-1}
    Let $\mu_1$, $\mu_2$ be measures with the following densities, respectively,
    \[
        p_1 (t) = \frac{t^{2n}e^{-t}(t +e^{t / 2}-1)}{2(4^n+n)(2n)!}, \quad p_2 (t) = \frac{2 \gamma(t) \gamma '(t)}{(n!)^2}.
    \]
    Then, $\mu_1$ and $\mu_2$ are probability measures on $(0,\infty)$, and $\mu_2  \preceq_{st} \mu_1$.
\end{lemma}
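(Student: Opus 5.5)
The plan has two parts: first check that both densities integrate to $1$, then verify the single-crossing hypothesis of Lemma~\ref{lem:measures-comparison}. For the crossing, I will show that the ratio $p_1/p_2$ is non-decreasing on $(0,\infty)$, which avoids any explicit asymptotic comparison of constants.

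\emph{Normalization.} Both densities are clearly nonnegative. For $p_1$ I use $\int_0^\infty t^{k}e^{-at}\,\mathrm dt = k!/a^{k+1}$ and integrate term by term:
\[
\int_0^\infty t^{2n}e^{-t}\bigl(t+e^{t/2}-1\bigr)\,\mathrm dt=(2n+1)!+2^{2n+1}(2n)!-(2n)! .
\]
The right-hand side equals $(2n)!\bigl(2n+2\cdot 4^n\bigr)=2(4^n+n)(2n)!$, which is exactly the normalizing constant. For $p_2$, observe that $p_2=\frac{\mathrm d}{\mathrm dt}\bigl(\gamma(t)^2/(n!)^2\bigr)$. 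Since $\gamma(0)=0$ and $\gamma(\infty)=\Gamma(n+1)=n!$, the integral of $p_2$ equals $1$.

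\emph{Monotone likelihood ratio.} Since $\gamma'(t)=t^ne^{-t}$, the factor $e^{-t}$ cancels and
\[
\frac{p_1(t)}{p_2(t)}=K\,\frac{t^{n}\bigl(t+e^{t/2}-1\bigr)}{\gamma(t)}=K\cdot\frac{t^{n+1}}{\gamma(t)}\cdot\Bigl(1+\frac{e^{t/2}-1}{t}\Bigr),
\]
where $K>0$ is a constant. I will show that each factor on the right is positive and non-decreasing.
\begin{itemize}
\item For the first factor, substituting $u=ts$ gives $\gamma(t)/t^{n+1}=\int_0^1 s^ne^{-ts}\,\mathrm ds$. This is decreasing in $t$, so $t^{n+1}/\gamma(t)$ is increasing.
\item For the second factor, $(e^{t/2}-1)/t$ is the slope of the chord from $0$ to $t$ of the convex function $t\mapsto e^{t/2}-1$, which vanishes at $0$. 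Such chord slopes are non-decreasing in $t$.
\end{itemize}
Hence $p_1/p_2$ is non-decreasing, as a product of positive non-decreasing functions.

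\emph{Conclusion.} Both $\mu_1$ and $\mu_2$ are probability measures, so $p_1-p_2$ cannot have a strict constant sign a.e. Since $p_1/p_2$ is non-decreasing, there is a threshold $c>0$ with $p_1\le p_2$ on $(0,c)$ and $p_1\ge p_2$ on $(c,\infty)$. We can take $c=\sup\{t:\ p_1(t)/p_2(t)\le 1\}$. This $c$ is positive, because otherwise $p_1>p_2$ everywhere, contradicting equal total mass. It is also finite: $p_1/p_2\sim K' t^{n+1}e^{t/2}/n!\to\infty$ as $t\to\infty$. Lemma~\ref{lem:measures-comparison} then gives $\mu_2\preceq_{st}\mu_1$.

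The only step needing real care is the monotonicity of the ratio, specifically recognizing the factorization through $t^{n+1}/\gamma(t)$ rather than $t^n/\gamma(t)$. The latter factor is not monotone, so it would not work. The factorization also explains the odd-looking weight $t+e^{t/2}-1$ in $p_1$.
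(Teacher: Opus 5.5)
Your proof is correct and takes essentially the paper's route. Both arguments show that $p_1/p_2$ is non-decreasing and then use equal total mass to get a single crossing; your factorization $\frac{t^{n+1}}{\gamma(t)}\bigl(1+\frac{e^{t/2}-1}{t}\bigr)$ is the multiplicative form of the paper's logarithmic-derivative split, with your integral representation and chord-slope argument replacing its bounds $\gamma(t)\ge e^{-t}t^{n+1}/(n+1)$ and $e^{t/2}-1\le \frac{t}{2}e^{t/2}$. One harmless slip: since $\gamma(t)\to n!$, the asymptotic is $p_1/p_2\sim K t^{n}e^{t/2}/n!$, not with $t^{n+1}$, and it still tends to $\infty$.
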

\begin{proof}
    Since
        $$
            \int_0^\infty t^{2n}e^{-t}(t +e^{t / 2}-1)\;\mathrm  d t
        $$
        $\mu_1$ is a probability measure.
        Also, $\mu_2$ is a probability measure since
        \[
            \int_0^\infty p_2(t) \;\mathrm dt= \frac{\gamma^2(t)}{(n!)^2} \Bigg|_0^\infty = \frac{\Gamma(n+1)^2}{ (n!)^2}  =1.
        \]
        By Lemma \ref{lem:measures-comparison}, it is enough to prove that there exists $c>0$ such that
        $$
        p_1(t) \leq p_2(t) \quad \text { for a.e. } t \in(0, c),
        \quad \text{and} \quad
        p_1(t) \geq p_2(t) \quad \text { for a.e. } t \in(c, \infty) .
        $$
        Define the ratio
        \[
            R (t) = \frac{p_1(t)}{p_2(t)}, \qquad t >0
            .
        \]
        Using the fact that $ \gamma ' (t) = t^ne^{-t}$, we obtain
        \begin{align}
            R (t) &= \frac{t^{2n}e^{-t}(t +e^{t / 2}-1)}{2(4^n+n)(2n)!} \cdot \frac{(n!)^2}{2 \gamma(t) \gamma '(t)}
            = \frac{(n!)^2}{4(4^n+n)(2n)!} \cdot \frac{t^{n}(t +e^{t / 2}-1)}{\gamma(t) }
            .
        \end{align}
        Note that $R$ is positive since $e^{t/2} \geq 1$. Its logarithmic derivative is
        $$
        \frac{d}{d t} \log R(t)=\frac{n}{t} +\frac{1+\frac{1}{2} e^{t / 2}}{t+e^{t / 2}-1} -\frac{\gamma'(t)}{\gamma(t)}.
        $$
        Observe that
        \[
            e^{t / 2}-1=\int_0^{t / 2} e^u \;\mathrm  d u \leq \int_0^{t / 2} e^{t/2} \;\mathrm d u = \frac{t}{2}e^{t/2}.
        \]
        Hence,
        $$
            \frac{1+\frac{1}{2} e^{t / 2}}{t+e^{t / 2}-1} \geq \frac{1}{t}.
        $$
        Also,
        \[
            \gamma(t) = \int_0^t u^n e^{-u}  \;\mathrm d u \geq \int_0^t u^n e^{-t}  \;\mathrm d u \geq e^{-t} \frac{t^{n+1}}{n+1}.
        \]
        Since $ \gamma ' (t) = t^ne^{-t}$,
        $$
            \frac{\gamma'(t)}{\gamma(t)} \leq \frac{n+1}{t}.
        $$
        Therefore,
        \[
            \frac{d}{d t} \log R(t) \geq \frac{n}{t} +\frac{1}{t} - \frac{n+1}{t} = 0.
        \]
        Thus, $R$ is non-decreasing.
        Using the fact that $p_2(t)>0$ and that $p_1$ and $p_2$ have the same total mass, we have
        $$
        0=\int_0^{\infty}p_1(t)-p_2(t) \;\mathrm  d t=\int_0^{\infty}(R(t)-1) p_2(t) \;\mathrm  d t.
        $$
        Consequently, $R-1$ cannot be everywhere positive or everywhere negative. Therefore, by the continuity of $R$, there exists $c>0$ such that
        $$
        R(t) \leq 1 \quad \text { for a.e. } t \in(0, c),
        \quad \text{and} \quad R(t) \geq 1 \quad \text { for a.e. } t \in(c, \infty) ,
        $$
        which is equivalent to the desired comparison between $p_1$ and $p_2$.
\end{proof}
\plunruzzero*
\begin{proof}
    We prove the stronger inequality
    \begin{equation} \label{eq:desired-ineq-thm1}
        \|f\|_1 \|f \oplus g \oplus h\|_1 \leq c_n (4^n +n) \binom{2n}{n} \|f \star_0 g \|_1\|f  \star_0 h\|_1,
    \end{equation}
    where $c_n$ is the best possible constant for volume inequality \eqref{eq:Plun-Ruza_general}.
    Then, Theorem \ref{thm: Plu-Ruz-0} follows from Proposition \ref{prop:monotonicity}.
    The estimate for this constant is given in Lemma \ref{upper-bound-c}.
    It follows from the level set identity \eqref{sum-Uf} that
    $$
        U_t (f \oplus g \oplus h) = U_t (f) + U_t (g)  + U_t (h).
    $$
    Using the layer-cake representation \eqref{mix-integral-for}, we get
    \begin{align}
        \|f\|_1 \|f \oplus g \oplus h\|_1  &= \int_0^1 |U_s (f)  | \;\mathrm ds\int_0^1 |U_t (f \oplus g \oplus h) | \;\mathrm dt
        \\
        &=
        \int_0^1 \int_0^1 |U_s (f)  | |U_t (f) + U_t (g)  + U_t (h) | \;\mathrm ds \;\mathrm dt.
    \end{align}
    We split the domain of integration into the two regions: $s <t$ and $s\geq t$.
        Suppose that $0<s< t<1$. Then, $U_t(f) \subset U_s(f)$. Using the volume inequality \eqref{eq:Plun-Ruza_general}, we obtain
        \begin{align}
            |U_s (f)  | |U_t (f) + U_t (g)  + U_t (h) |
            &\leq |U_s (f)  | |U_s (f) + U_t (g)  + U_t (h) |
            \\
            &\leq c_n |U_s (f) + U_t (g) | |U_s (f)   + U_t (h) |.
        \end{align}
        Observe that
        $$
        U_s(f)+U_t(g) \subseteq U_{s t}(f \star_0 g), \quad U_s(f)+U_t(h) \subseteq U_{s t}\left(f \star_0 h\right) .
        $$
        Indeed, if $y \in U_s(f)$ and $z \in U_t(g)$, then $f(y) \geq s$ and $g(z) \geq t$. Hence,
        $$
        \left(f \star_0 g\right)(y+z) \geq f(y) g(z) \geq s t .
        $$
        Therefore, $y+z \in U_{s t}\left(f \star_0 g\right)$, proving the inclusion. The second inclusion follows by the same argument, replacing $g$ by $h$.
        Hence,
        \[
            |U_s (f)  | |U_t (f) + U_t (g)  + U_t (h) |  \leq c_n |U_{s t}(f \star_0 g) | | U_{s t}(f \star_0 h)|.
        \]
        Thus, the contribution from the domain $\{s \leq t\}$ is
        \[
            \iint_{\{s< t\}} |U_s (f)  | |U_t (f) + U_t (g)  + U_t (h) | \;\mathrm ds \;\mathrm dt \leq c_n \iint_{\{s< t\}}  |U_{s t}(f \star_0 g) | | U_{s t}(f \star_0 h)| \;\mathrm ds \;\mathrm dt.
        \]
        Using the change of variable $r = st$ and then Fubini's theorem, we obtain
        \begin{align}
        &\iint_{\{s\leq t\}} |U_s (f)  | |U_t (f) + U_t (g)  + U_t (h) | \;\mathrm ds \;\mathrm dt
        \\
        &\leq c_n \int_0^1 \int_{0}^{t^2} |U_{r}(f \star_0 g) | | U_{r}(f \star_0 h)| \;\frac{\mathrm dr}{t} \;\mathrm dt
        = c_n  \int_{0}^{1} \int_{\sqrt{r}}^1 |U_{r}(f \star_0 g) | | U_{r}(f \star_0 h)|  \;\frac{\mathrm dt}{t} \;\mathrm dr
        \\
        &=\frac{c_n}{2} \int_0^1 \log \left(\frac{1}{r}\right)|U_{r}(f \star_0 g) | | U_{r}(f \star_0 h)|  \;\mathrm dr. \label{eq:first-con}
        \end{align}
        Now, we estimate the contribution from the domain $0<t \leq s <1$. Using a similar argument, we obtain
        \[
            |U_s (f)  | |U_t (f) + U_t (g)  + U_t (h) |  \leq c_n |U_{t^2}(f \star_0 g) | | U_{t^2}(f \star_0 h)|.
        \]
        Thus, using the change of variable $r = t^2$,
        \begin{align}
            &\iint_{\{t < s\}} |U_s (f)  | |U_t (f) + U_t (g)  + U_t (h) | \;\mathrm ds \;\mathrm dt
            \\
            &\leq c_n \iint_{\{ t<s\}}  |U_{t^2}(f \star_0 g) | | U_{t^2}(f \star_0 h)| \;\mathrm ds \;\mathrm dt
            =  c_n \int_{0}^1 (1-t) |U_{t^2}(f \star_0 g) | | U_{t^2}(f \star_0 h)|  \;\mathrm dt
            \\
            &= \frac{c_n}{2} \int_0^1\left(r^{-1 / 2}-1\right) |U_{r}(f \star_0 g) | | U_{r}(f \star_0 h)|  \;\mathrm dr. \label{eq:second-con}
        \end{align}
        Adding the two contributions, we obtain
        \begin{align}
            \|f\|_1 \|f \oplus g \oplus h\|_1
            & \leq \frac{c_n}{2} \int_0^1\left(\log \left(\frac{1}{r}\right)+r^{-1 / 2}-1\right)\left|U_r\left(f \star_0 g\right)\right|\left|U_r\left(f \star_0 h\right)\right|\;\mathrm  d r.
        \end{align}
        Denote
        \[
            \varphi_1 = \operatorname{base}_0 (f \star_0 g) = -\log ( f \star_0 g), \quad \text{and} \quad \varphi_2 = \operatorname{base}_0 (f \star_0 h)= -\log ( f \star_0 h).
        \]
        Thus, $U_r(f \star_0 g) = \{x : \varphi_1 (x) \leq -\log r \}$ and $U_r(f \star_0 h) = \{x : \varphi_2 (x) \leq -\log r \}.$ Using the change of variables $ r = e^{-t}$, we have
        \begin{align}
            \|f\|_1 \|f \oplus g \oplus h\|_1
            & \leq \frac{c_n}{2} \int_0^1\left(\log \left(\frac{1}{r}\right)+r^{-1 / 2}-1\right)\left|U_r\left(f \star_0 g\right)\right|\left|U_r\left(f \star_0 h\right)\right|\;\mathrm  d r
            \\
            & = \frac{c_n}{2} \int_0^\infty e^{-t}(t +e^{t / 2}-1)\left|\{x : \varphi_1 (x) \leq t\}\right|\left|\{x : \varphi_2 (x) \leq t \}\right|\;\mathrm  d t. \label{eq:left-desired-0}
        \end{align}
        Denote
        \[
            V_1 (t) = \frac{|\{x : \varphi_1 (x) \leq t \}|}{t^n}, \quad \text{and} \quad  V_2 (t) = \frac{|\{x : \varphi_2 (x) \leq t \}|}{t^n}.
        \]
        Using Lemma \ref{lem:monotonicity-V}, $V_1$ and $V_2$ are non-increasing. Since $V_1,V_2 \geq 0$, their product $V_1V_2$ is also non-increasing.
        Now, we define measure $\mu_1,\mu_2 $ on $(0,\infty)$ with densities
        \[
            p_1 (t) = \frac{t^{2n}e^{-t}(t +e^{t / 2}-1)}{2(4^n+n)(2n)!},
            \qquad
            p_2 (t) = \frac{2 \gamma(t) \gamma '(t)}{(n!)^2}.
        \]
        By Lemma \ref{lem:stoc-1}, $\mu_1,\mu_2$ are probability measures. Moreover, there exists $c>0$ such that
        $$
        p_1(t) \leq p_2(t) \quad \text { for a.e. } t \in(0, c),
        \quad \text{and} \quad
        p_1(t) \geq p_2(t) \quad \text { for a.e. } t \in(c, \infty) .
        $$
        By Lemma \ref{lem:measures-comparison}, we have $\mu_2 \preceq_{st} \mu_1$.
        Therefore, \eqref{eq:left-desired-0} can be rewritten as
        \begin{align}
            \|f\|_1 \|f \oplus g \oplus h\|_1 \leq
            (4^n+n)(2n)!c_n \int_{0}^{\infty} V_1 (t) V_2 (t) \;\mathrm d \mu_1 (t). \label{eq:left-desired-0-ver2}
        \end{align}
        Applying Lemma \ref{lem:measures-comparison-non-increasing} to \eqref{eq:left-desired-0-ver2} and the fact that $ \;\mathrm d\mu_2 = \frac{2 \gamma(t) \gamma '(t)}{(n!)^2} \;\mathrm d t$, we obtain
        \begin{align}
            \|f\|_1 \|f \oplus g \oplus h\|_1
            &\leq
            (4^n+n)(2n)! c_n \int_{0}^{\infty} V_1 (t) V_2 (t) \;\mathrm d \mu_2 (t)
            \\
            &= 2(4^n+n)\binom{2n}{n}c_n \int_{0}^{\infty} V_1 (t) V_2 (t) \gamma(t) \gamma '(t) \;\mathrm d t. \label{eq:left-desired-0-ver3}
        \end{align}
        Observe that
        \[
            \int_{0}^{\infty}\int_{0}^{\infty}\gamma'(s) \gamma'(t) V_1(\max\{s,t\}) V_2(\max\{s,t\})  \;\mathrm d s \;\mathrm d t = 2\int_{0}^{\infty} V_1 (t) V_2 (t) \gamma(t) \gamma '(t) \;\mathrm d t.
        \]
        Indeed, split the domain of integration into the two regions $s \leq t$ and $t < s$. On the first region, we obtain
        \begin{align}
            \iint_{\{s\leq t\}} \gamma'(s) \gamma'(t) V_1(\max\{s,t\}) V_2(\max\{s,t\})  \;\mathrm d s \;\mathrm d t
            &= \int_0^\infty \int_{0}^t \gamma'(s) \gamma'(t) V_1(t) V_2(t)  \;\mathrm d s \;\mathrm d t
            \\
            &= \int_{0}^{\infty} V_1 (t) V_2 (t) \gamma(t) \gamma '(t) \;\mathrm d t.
        \end{align}
        By symmetry, the integral over the region $\{t<s\}$ has the same value. Adding them gives the claimed identity. Substituting this identity into \eqref{eq:left-desired-0-ver3}, then using the fact that $V_1,V_2$ are non-increasing, and finally applying Fubini's theorem, we obtain
        \begin{align}
            \|f\|_1 \|f \oplus g \oplus h\|_1
            &\leq  (4^n+n)\binom{2n}{n}c_n \int_{0}^{\infty}\gamma'(s) \gamma'(t) V_1(\max\{s,t\}) V_2(\max\{s,t\})  \;\mathrm d s \;\mathrm d t
            \\
            &\leq (4^n+n)\binom{2n}{n}c_n \int_{0}^{\infty} \gamma'(s) \gamma'(t) V_1(s) V_2(t)  \;\mathrm d s \;\mathrm d t
            \\
            &=
            (4^n+n)\binom{2n}{n}c_n  \int_0^{\infty}V_1 (s) \gamma '(s)\;\mathrm d s\int_0^{\infty}V_2 (t) \gamma '(t)\;\mathrm d t.
        \end{align}
        Using Lemma \ref{lem:sublevel-set-representation} with the function $ t \mapsto e^{-t}$ and the fact that $\gamma '(t)=t^ne^{-t}$, we get
        \begin{equation}
        \int_{\mathbb{R}^n} (f \star_0 g)(x) \;\mathrm d x=\int_0^{\infty}|\{x: \varphi_1(x) \leq t\}|e^{-t} \;\mathrm d t =\int_0^{\infty}t^ne^{-t}V_1 (t) \;\mathrm d t =\int_0^{\infty}V_1 (t) \gamma '(t)\;\mathrm d t, \label{eq:left-desired-1}
        \end{equation}
        and
        \begin{equation}
            \int_{\mathbb{R}^n} (f \star_0 h)(x) \;\mathrm d x=\int_0^{\infty}|\{x: \varphi_2(x) \leq t\}|e^{-t} \;\mathrm d t =\int_0^{\infty}t^ne^{-t}V_2 (t) \;\mathrm d t=\int_0^{\infty}V_2 (t) \gamma '(t)\;\mathrm d t. \label{eq:left-desired-2}
        \end{equation}
        Substituting these identities in the above inequality completes the proof.
\end{proof}

\begin{remark}
    As noted in the introduction, the best constant $c$ must be greater than 1, and in fact, since \eqref{eq:Plu-Ruz-0} specializes to the corresponding inequality for volumes of convex bodies by considering indicator functions, we must have $c\ge 4/3 -o(1)$ for large $n$.
\end{remark}

\begin{remark}
    When $\alpha = 0 = \beta$, Theorem \ref{thm: Plu-Ruz-0} remains valid for arbitrary nontrivial integrable log-concave functions. Let $x_0,y_0,z_0$ be maximizers of $f,g$, and $h$ respectively.
    After translations and normalizations, Theorem \ref{thm: Plu-Ruz-0} gives
    \[
        \frac{\|f\|_1 \|f\star_0 g \star_0 h\|_1}{\|f\star_0 g \|_1 \|f\star_0  h\|_1} =
        \frac{\|F\|_1 \|F\star_0 G \star_0 H\|_1}{\|F\star_0 G \|_1 \|F\star_0  H\|_1} \leq c^n
    \]
    where
    $$
    F(x)=\frac{f\left(x+x_0\right)}{f\left(x_0\right)}, \quad G(x)=\frac{g\left(x+y_0\right)}{g\left(y_0\right)}, \quad H(x)=\frac{h\left(x+z_0\right)}{h\left(z_0\right)} .
    $$
    These functions belong to $C_0\left(\mathbb{R}^n\right)$.

    We note that this normalization argument is specific to the Asplund product.
Indeed, translations commute with the $\alpha$-sum for every
$\alpha\in[-\infty,0]$, but rescaling does not commute with
the $\alpha$-sum when $-\infty<\alpha<0$. More precisely,
\[
((af)\star_\alpha(bg))(x)
=
\sup_{y+z=x}
\left(a^\alpha f(y)^\alpha+b^\alpha g(z)^\alpha-1\right)^{1/\alpha},
\]
which is not, in general, a constant multiple of $f\star_\alpha g$.
Consequently, Theorem~\ref{thm: Plu-Ruz-0} does not extend to
functions with arbitrary maxima under the $\alpha$-sum.
For $\alpha=-\infty$, the normalization argument remains valid when
$f,g,h$ have the same maximum, but it fails in general when
their maxima are different.
\end{remark}

\subsection{The \texorpdfstring{$\alpha$}{α}-concave functions} \label{sec: alpha-concave-plu}
We extend our result to $C_\alpha(\R^n)$. In what follows, we restrict to $\alpha\in(-1/n,0)$, which guarantees integrability.

We now develop a comparison that recovers Theorem \ref{thm: Plu-Ruz-0} as $\alpha \to 0^-$. Define
$$
    B_\alpha f :=e^{-\operatorname{base}_\alpha(f)},
$$
which is a log-concave function for every $f\in C_\alpha(\mathbb R^n)$. It follows from the definition \eqref{def:alpha-sum-original} and \eqref{monotonicity} that
\begin{equation}
    \label{eq:03}
    B_\alpha (f\star_\alpha g) = B_\alpha f \star_0 B_\alpha g \leq B_\alpha f \star_\alpha B_\alpha g.
\end{equation}
We also have the pointwise inequality $B_\alpha f \leq f$. Indeed, using the fact that for any $u >0$, $u-1 \geq \log u$, we have
$f^\alpha-1\geq\ln f^\alpha=\alpha\ln f.$ Consequently,
\begin{equation}\label{eq:02}
    \|B_\alpha f\|_1\leq\|f\|_1.
\end{equation}
Importantly, the latter inequality can be reversed.

\begin{theorem} \label{thm:monotonicity-alpha-base}
    Let $\varphi: \mathbb{R}^n \rightarrow[0, \infty]$ be a lower semicontinuous convex function such that $\varphi(0)=0$ and $ \varphi (x) \to \infty$ as $|x| \to \infty$. If
    $
    -\frac{1}{n}<\alpha_1<\alpha_2 \leq 0,
    $
    then
    \begin{equation} \label{eq:monotonicity-alpha-base}
        \prod_{j=1}^n\left(1+j \alpha_1\right) \int_{\mathbb{R}^n}\left(1-\alpha_1 \varphi(x)\right)^{1 / \alpha_1} d x
        \leq
        \prod_{j=1}^n\left(1+j \alpha_2\right) \int_{\mathbb{R}^n}\left(1-\alpha_2 \varphi(x)\right)^{1 / \alpha_2} d x,
    \end{equation}
    where the expression at $\alpha_2=0$ is interpreted as $\int_{\mathbb{R}^n} e^{-\varphi(x)} \;\mathrm d x$.
\end{theorem}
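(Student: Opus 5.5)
We follow the moment-comparison scheme already used in the proof of Theorem~\ref{thm: Plu-Ruz-0}, with the sublevel-volume function $V(t)=|\{x:\varphi(x)\le t\}|/t^n$ from Lemma~\ref{lem:monotonicity-V}. This $V$ is non-increasing because $\varphi$ is convex with $\varphi(0)=0$.

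\textbf{Step 1: rewrite each side via Lemma~\ref{lem:sublevel-set-representation}.} For $\alpha<0$, apply the lemma with $F_\alpha(t)=(1-\alpha t)^{1/\alpha}$. This function is non-increasing, locally absolutely continuous, tends to $0$ at infinity, and has
\[
-F_\alpha'(t)=(1-\alpha t)^{1/\alpha-1}.
\]
The lemma gives
\[
\int_{\mathbb{R}^n}(1-\alpha\varphi)^{1/\alpha}\,dx=\int_0^\infty V(t)\,t^n(1-\alpha t)^{1/\alpha-1}\,dt .
\]
For $\alpha=0$ we get instead $\int_0^\infty V(t)\,t^ne^{-t}\,dt$.

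\textbf{Step 2: normalize the densities.} By the Beta integral (substitute $u=-\alpha t$),
\[
\int_0^\infty t^n(1-\alpha t)^{1/\alpha-1}\,dt=\frac{n!}{\prod_{j=1}^n(1+j\alpha)} ,
\]
which is finite exactly when $\alpha>-1/n$. For $\alpha=0$ the same formula gives $n!$. It follows that
\[
q_\alpha(t):=\frac{\prod_{j=1}^n(1+j\alpha)}{n!}\,t^n(1-\alpha t)^{1/\alpha-1}
\]
is a probability density on $(0,\infty)$. Moreover, the left side of \eqref{eq:monotonicity-alpha-base} equals $n!\int V\,d\mu_{\alpha_1}$, where $d\mu_\alpha=q_\alpha\,dt$, and the right side equals $n!\int V\,d\mu_{\alpha_2}$. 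So \eqref{eq:monotonicity-alpha-base} reads
\[
\int V\,d\mu_{\alpha_1}\le\int V\,d\mu_{\alpha_2}.
\]
By Lemma~\ref{lem:measures-comparison-non-increasing}, since $V$ is non-increasing, it suffices to prove $\mu_{\alpha_2}\preceq_{st}\mu_{\alpha_1}$.

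\textbf{Step 3: stochastic order via Lemma~\ref{lem:measures-comparison}.} We use the single-crossing criterion, exactly as in Lemma~\ref{lem:stoc-1}. Consider the ratio
\[
R(t)=\frac{q_{\alpha_1}(t)}{q_{\alpha_2}(t)}=C\,\frac{(1-\alpha_1 t)^{1/\alpha_1-1}}{(1-\alpha_2 t)^{1/\alpha_2-1}}
\]
(with $e^{-t}$ in the denominator if $\alpha_2=0$). Its logarithmic derivative is $h(\alpha_2,t)-h(\alpha_1,t)$, where
\[
h(\alpha,t)=\frac{1-\alpha}{1-\alpha t}.
\]
For $t\ge 0$ we have
\[
\partial_\alpha h=\frac{t-1}{(1-\alpha t)^2}.
\]
This has the wrong sign for $t<1$, so $R$ is not globally monotone. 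This is the main obstacle.

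To get around it, compare $\log q$ directly. Since $\partial_t\log R=h(\alpha_2,t)-h(\alpha_1,t)$ and $\partial_\alpha h$ changes sign only at $t=1$, the function $\log R$ is decreasing on $(0,1)$ and increasing on $(1,\infty)$. Also $R\to C=\prod_j(1+j\alpha_1)/\prod_j(1+j\alpha_2)<1$ as $t\to0$, and $R\to\infty$ as $t\to\infty$, because $\mu_{\alpha_1}$ has the heavier polynomial tail. Hence $R-1$ is negative near $0$, has a unique interior minimum at $t=1$, and then increases to $+\infty$. Therefore $R-1$ changes sign exactly once, from negative to positive. Equal total masses force such a crossing to exist, giving the required point $c$ with $q_{\alpha_1}\le q_{\alpha_2}$ on $(0,c)$ and $q_{\alpha_1}\ge q_{\alpha_2}$ on $(c,\infty)$.

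Lemma~\ref{lem:measures-comparison} then yields $\mu_{\alpha_2}\preceq_{st}\mu_{\alpha_1}$, and Lemma~\ref{lem:measures-comparison-non-increasing} finishes the proof. If $\int V\,d\mu$ is infinite, the inequalities still hold in $[0,\infty]$.
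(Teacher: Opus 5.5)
Your proposal is correct and follows the paper's proof essentially step for step. It uses the same non-increasing profile $V(t)=|\{\varphi\le t\}|/t^n$, the same Beta-normalized densities, and the same single-crossing argument for $R=p_{\alpha_1}/p_{\alpha_2}$ ($R$ decreasing on $(0,1)$, increasing on $(1,\infty)$, $R(0^+)<1$, $R\to\infty$), which feeds into Lemmas~\ref{lem:measures-comparison} and~\ref{lem:measures-comparison-non-increasing}; the non-monotonicity of $R$ you call an obstacle is no obstacle, since that single crossing is all the paper, and your own resolution, needs.
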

\begin{proof}
    Define
    \[
        V(t) =\frac{|\{ x: \varphi (x)\leq t\}|}{t^n}, \qquad t > 0.
    \]
    It follows from Lemma \ref{lem:monotonicity-V} that $V$ is non-increasing.
    For $\alpha \in (-\frac{1}{n},0),$ applying Lemma \ref{lem:sublevel-set-representation} to the function $t \mapsto (1-\alpha t)^{1 / \alpha} $, we obtain
    \begin{align}
         \int_{\R^n} (1-\alpha  \varphi(x))^{1/\alpha} \;\mathrm dx
        = \int_{0}^{\infty}V(t) t^n (1 -\alpha t)^{1 / \alpha-1}  \;\mathrm dt.
    \end{align}
    Similarly, applying Lemma \ref{lem:sublevel-set-representation} to the function $t \to e^{-t}$, we get
    \[
    \int_{\mathbb{R}^n} e^{-\varphi(x)} \;\mathrm d x=\int_0^{\infty} V(t) t^ne^{-t} \;\mathrm d t.
    \]
    Define a measure $\mu_\alpha$ on $(0,\infty)$ with density
    \[
        p_\alpha (t) = \begin{cases}
            \frac{\prod_{j=1}^n(1+j \alpha)}{n!} t^n (1 -\alpha t)^{1 / \alpha-1} &-\frac{1}{n} <\alpha <0
            \\
            \frac{1}{n!} t^ne^{-t} &\alpha =0
        \end{cases}
    \]
    Then, $\mu_\alpha$ is a probability measure. For $\alpha=0$, this is the gamma integral. For $\alpha <0$,
    \begin{align}
        \int_0^{\infty} t^n(1-\alpha t)^{1 / \alpha-1} \;\mathrm d t
        & =(-\alpha)^{-n-1} B\left(n+1, -\frac{1}{\alpha}-n\right)
        \\
        &
        =(-\alpha)^{-n-1} \frac{\Gamma(n+1) \Gamma(-1 / \alpha-n)}{\Gamma(-1 / \alpha+1)}
        =\frac{n!}{\prod_{j=1}^n(1+j \alpha)} .
    \end{align}
    Substituting these formulas into \eqref{eq:monotonicity-alpha-base} and dividing both sides by $n!$, it suffices to prove
    \begin{equation} \label{eq:comparison}
        \int_{0}^{\infty} V(t) \left(p_{\alpha_1} (t) - p_{\alpha_2} (t) \right)\;\mathrm dt \leq 0.
    \end{equation}
    The desired inequality follows from Lemmas \ref{lem:measures-comparison} and \ref{lem:measures-comparison-non-increasing}.
    Therefore, it remains to verify the assumption of Lemma \ref{lem:measures-comparison}: there exists $c > 0$ such that
    \[
        p_{\alpha_1}(t) \leq p_{\alpha_2}(t) \quad \text { for a.e. } t \in(0, c),
    \quad \text{and} \quad
    p_{\alpha_1}(t) \geq p_{\alpha_2}(t) \quad \text { for a.e. } t \in(c, \infty) .
    \]
    Define the ratio
    \[
        R (t) = \frac{p_{\alpha_1} (t)}{p_{\alpha_2}(t)}, \qquad t>0.
    \]
    Its logarithmic derivative is
    $$
    \frac{d}{d t} \log R(t)=\frac{1-\alpha_2}{1-\alpha_2 t}-\frac{1-\alpha_1}{1-\alpha_1 t}=\frac{\left(\alpha_1-\alpha_2\right)(1-t)}{\left(1-\alpha_1 t\right)\left(1-\alpha_2 t\right)}.
    $$
    Thus, $R$ decreases on $( 0,1 )$ and increases on $( 1, \infty )$. Moreover,
    $$
        R(0) = \frac{\prod_{j=1}^n(1+j \alpha_1)}{\prod_{j=1}^n(1+j \alpha_2)} < 1, \qquad  \lim_{t \rightarrow \infty} R(t) = + \infty.
    $$
    Hence, $R$ decreases from a value below $1$ on $( 0,1 )$, and then increases strictly to $+\infty$ on $(1, \infty)$. It therefore crosses 1 exactly once, from below to above.
\end{proof}

\begin{corollary}\thlabel{cor:01}
For any $\alpha\in(-\frac{1}{n},0)$ and any $f\in C_\alpha(\mathbb R^n)$, we have
\begin{equation}\label{eq:01}
    \|f\|_1 \leq \frac{1}{\prod_{j=1}^n(1+j \alpha)}\left\|B_\alpha f\right\|_1.
\end{equation}
\end{corollary}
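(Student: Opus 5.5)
The plan is to apply Theorem~\ref{thm:monotonicity-alpha-base} to the convex base of $f$ with $\alpha_1=\alpha$ and $\alpha_2=0$.

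Set $\varphi=\operatorname{base}_\alpha f=\frac{1-f^\alpha}{\alpha}$. Since $f\in C_\alpha(\mathbb R^n)$, the discussion in Section~\ref{section: alpha-concave} shows that $\varphi$ has the required properties. It is convex and lower semicontinuous, with values in $[0,\infty]$ and $\varphi=+\infty$ off the support of $f$. It vanishes at the origin, because $f(0)=1$. It tends to $\infty$ as $|x|\to\infty$, because $f\to0$. So the hypotheses of Theorem~\ref{thm:monotonicity-alpha-base} hold.

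Next I will invert the base transform. From the definition, $1-\alpha\varphi=f^\alpha$, hence $(1-\alpha\varphi(x))^{1/\alpha}=f(x)$ wherever $f>0$. Where $f=0$ we have $\varphi=+\infty$, and since $1/\alpha<0$ the left side is interpreted as $0$, so the identity holds everywhere. Therefore
\[
\int_{\mathbb R^n}(1-\alpha\varphi)^{1/\alpha}\,\mathrm dx=\|f\|_1 .
\]
By the definition of $B_\alpha$, we also have $\int_{\mathbb R^n}e^{-\varphi}\,\mathrm dx=\|B_\alpha f\|_1$.

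Now apply \eqref{eq:monotonicity-alpha-base} with $\alpha_1=\alpha\in(-\frac1n,0)$ and $\alpha_2=0$. The product on the right side is empty-valued, i.e.\ $\prod_{j=1}^n(1+j\cdot0)=1$, so the inequality becomes
\[
\prod_{j=1}^n(1+j\alpha)\,\|f\|_1\le\|B_\alpha f\|_1 .
\]
Each factor $1+j\alpha$ is positive because $\alpha>-1/n$ and $j\le n$. Dividing by the product gives \eqref{eq:01}.

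There is no real obstacle. The only care needed is with the convention at points where $\varphi=\infty$, together with checking that Lemma~\ref{lem:sublevel-set-representation} (used inside Theorem~\ref{thm:monotonicity-alpha-base}) tolerates extended-valued $\varphi$. It does, since that lemma allows $\varphi:\mathbb R^n\to[0,\infty]$.
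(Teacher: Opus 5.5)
Your proposal is correct and is essentially the paper's proof: set $\varphi=\operatorname{base}_\alpha f$, note $f=(1-\alpha\varphi)^{1/\alpha}$ and $B_\alpha f=e^{-\varphi}$, and apply Theorem~\ref{thm:monotonicity-alpha-base} with $\alpha_1=\alpha$, $\alpha_2=0$. Your check of the convention where $\varphi=+\infty$ is fine; one wording fix is that the product at $\alpha_2=0$ is not ``empty,'' since each factor simply equals $1$.
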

\begin{proof}
    Let $\varphi = \operatorname{base}_\alpha(f) = \frac{1 - f^\alpha}{\alpha}$. By the assumptions on $f$, the function $\varphi$ is convex, lower semicontinuous, $\min \varphi=\varphi(0)=0$, and $\varphi(x) \rightarrow \infty$ as $|x| \rightarrow \infty$.
    Moreover, $B_\alpha f = e^{-\varphi}$ and $f = (1-\alpha  \varphi)^{1/\alpha}$. Hence, \eqref{eq:01} follows from Theorem \ref{thm:monotonicity-alpha-base}.
\end{proof}

\begin{remark}
    The constant in Corollary \ref{cor:01} is sharp. To see this, let $K$ be a convex body containing the origin in its interior and set
    $$
    f(x)=\left(1-\alpha\|x\|_K\right)^{1 / \alpha} .
    $$
    Its convex base is the Minkowski functional $\varphi(x)=\|x\|_K$, and hence
    $$
    \{x: \varphi(x) \leq t\}=\left\{x:\|x\|_K \leq t\right\}=t K .
    $$
    Therefore,
    $$
    V(t)=\frac{|\{x: \varphi(x) \leq t\}|}{t^n}=\frac{|t K|}{t^n}=|K|.
    $$
    Thus, $V$ is constant, so equality holds in \eqref{eq:comparison} and consequently in \eqref{eq:monotonicity-alpha-base}. Hence, the constant cannot be improved.
\end{remark}

\plunruzalpha*

\begin{proof}
For $\alpha=0$, the assertion is exactly Theorem \ref{thm: Plu-Ruz-0}. Hence, assume $-1 / n<\alpha<0$.
Using Corollary \ref{cor:01}, inequality \eqref{eq:03}
and Theorem \ref{thm: Plu-Ruz-0}, we obtain
\begin{align*}
\|f\|_1\|f\star_\alpha g\star_\alpha h\|_1
&\leq \frac{1}{\prod_{j=1}^n(1+j \alpha)^2}\|B_\alpha f \|_1\|B_\alpha(f\star_\alpha g\star_\alpha h)\|_1\\
&\leq \frac{1}{\prod_{j=1}^n(1+j \alpha)^2}\|B_\alpha f\|_1\|B_\alpha f \star_\alpha B_\alpha g \star_\alpha B_\alpha h\|_1\\
&\leq \frac{c^n}{\prod_{j=1}^n(1+j \alpha)^2}\|B_\alpha f\star_\alpha B_\alpha g\|_1\|B_\alpha f \star_\alpha B_\alpha h\|_1
\end{align*}
We use the following elementary monotonicity: if $f_1 \leq f_2$ and $g_1 \leq g_2$, then
\begin{equation} \label{eq:claim-in-thm2}
    f_1 \star_\alpha g_1 \leq f_2 \star_\alpha g_2 .
\end{equation}
Indeed, for any $y,z \in \R^n$, since $\alpha <0$,
\[
    f_1^\alpha (y) + g_1^\alpha (z) -1 \geq f_2^\alpha (y) + g_2^\alpha (z) -1,
\]
Raising both sides to the power of $1/\alpha$, we prove \eqref{eq:claim-in-thm2}. Applying this monotonicity \eqref{eq:claim-in-thm2} together with the fact that $B_\alpha f \leq f, B_\alpha g \leq g$ and $B_\alpha h \leq h$, we obtain
\[
    \|f\|_1\|f\star_\alpha g\star_\alpha h\|_1
    \leq \frac{c^{n}}{\prod_{j=1}^n(1+j \alpha)^{2}}\| f\star_\alpha  g\|_1\|f \star_\alpha  h\|_1. \qedhere
\]
\end{proof}

As a consequence of the proof, using the same approach, one can prove the existence of constants in functional analogues of \cite[Theorem~4.1]{N-25}.
\begin{corollary}
    Let $\alpha \in (-1/n,0]$, let $m \in \mathbb{N}$, and let $f,g_1,\ldots, g_m \in C_\alpha (\R^n) $. Then,
    \begin{equation}
        \|f\|_1^{m-1} \| f \star_\alpha g_1 \star_\alpha \cdots \star_\alpha g_m \|_1 \leq \left( \frac{c^{n}}{\prod_{j=1}^n(1+j \alpha)^{2}} \right)^{m-1} \prod_{k=1}^m \|f \star_\alpha g_k\|_1,
    \end{equation}
    where $c$ is an absolute constant such that $c <26$.
\end{corollary}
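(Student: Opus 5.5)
Write $K_\alpha := \frac{c^{n}}{\prod_{j=1}^n(1+j\alpha)^{2}}$ for the constant in Theorem~\ref{thm: Plu-Ruz-alpha}. The plan is induction on $m$, with Theorem~\ref{thm: Plu-Ruz-alpha} used once per step and the function ``$h$'' being the latest summand.

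For $m=1$ the inequality is an identity. For $m=2$ it is exactly Theorem~\ref{thm: Plu-Ruz-alpha}; when $\alpha=0$ we use Theorem~\ref{thm: Plu-Ruz-0} with $\alpha=\beta=0$.

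For the inductive step, assume the statement for $m-1\ge 2$. Set $G := g_1\star_\alpha\cdots\star_\alpha g_{m-1}$. Since $C_\alpha(\R^n)$ is closed under $\star_\alpha$, we have $G\in C_\alpha(\R^n)$. The $\alpha$-sum is associative, because it corresponds to infimal convolution of the convex bases via \eqref{def:alpha-sum-original}. Hence $f\star_\alpha g_1\star_\alpha\cdots\star_\alpha g_m = f\star_\alpha G\star_\alpha g_m$, and Theorem~\ref{thm: Plu-Ruz-alpha} applied to $(f,G,g_m)$ gives
\[
\|f\|_1\|f\star_\alpha G\star_\alpha g_m\|_1\le K_\alpha\,\|f\star_\alpha G\|_1\,\|f\star_\alpha g_m\|_1 .
\]
Multiplying both sides by $\|f\|_1^{m-2}$ and using the inductive hypothesis
\[
\|f\|_1^{m-2}\|f\star_\alpha G\|_1\le K_\alpha^{m-2}\prod_{k=1}^{m-1}\|f\star_\alpha g_k\|_1
\]
yields the claim with constant $K_\alpha^{m-1}$.

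The steps that need care are mostly bookkeeping. First, Theorem~\ref{thm: Plu-Ruz-alpha} must be applicable to $G$. This needs $G$ to be upper semicontinuous, to have $G(0)=1=\|G\|_\infty$, and to tend to $0$ at infinity. In base terms, the infimal convolution of finitely many coercive lsc convex functions with minimum $0$ at the origin is again coercive, lsc and minimized at $0$, and this is the main point to verify. Second, for $\alpha\in(-1/n,0]$ all the norms involved should be finite. If any norm on the right is infinite the inequality is trivial, so no issue arises. The one possible subtlety is $\|f\|_1=0$, which cannot happen since $f$ is upper semicontinuous and concave-type with $f(0)=1$.
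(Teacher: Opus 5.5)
Your proof is correct and is essentially the paper's argument. The paper applies Theorem~\ref{thm: Plu-Ruz-alpha} with $g=g_1$ and $h=g_2\star_\alpha\cdots\star_\alpha g_m$ and iterates, while you group the first $m-1$ summands into $G$ and peel off $g_m$, which is the same induction run from the other end.

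One harmless slip: $\|f\|_1=0$ can occur (e.g.\ $f=1_{\{0\}}\in C_\alpha(\R^n)$, since full-dimensional support is not part of the definition). However, nothing in your argument divides by $\|f\|_1$, and in that case the inequality is trivial for $m\ge 2$.
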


\begin{proof}
    For $m=1$, the assertion is immediate. For $m \geq 2$, apply Theorem \ref{thm: Plu-Ruz-alpha} with $g=g_1$ and $h=g_2 \star_\alpha \cdots \star_\alpha g_m$,
    \[
        \|f\|_1^{m-1} \| f \star_\alpha g_1 \star_\alpha \cdots \star_\alpha g_m \|_1
        \leq
        \frac{c^n}{\prod_{j=1}^n(1+j \alpha)^2}
        \|f\|_1^{m-2} \| f \star_\alpha g_1 \|_1 \|f \star_\alpha g_2 \star_\alpha \cdots \star_\alpha g_m \|_1
    \]
    Iterating this estimate proves the assertion.
\end{proof}

\subsection{Reverse Pr\'ekopa--Leindler-type inequality}
In this subsection, we derive a reverse Pr\'ekopa--Leindler-type inequality for $\alpha$-sum with $\alpha \in (-1/n,0)$ from Corollary \ref{cor:01}.

We recall the reverse Brunn--Minkowski inequality proved by V. Milman \cite{VM-86,VM-86/87,VM-88} and also in \cite{BM-12}: For any convex bodies $A$ and $B$ in $\mathbb{R}^n$, there exist linear volume-preserving maps $u_1,u_2 \in SL_n$ such that with some absolute constant $C$,
\begin{equation}
    \label{eq:Reverse-BM}
    |u_1(A)+u_2(B)|^{1 / n} \leq C\left(|A|^{1 / n}+|B|^{1 / n}\right).
\end{equation}
The functional analogue of \eqref{eq:Reverse-BM} for log-concave functions was proved by Klartag and V.~Milman \cite[Theorem 1.3]{KM-05}: Let $f_1,f_2$ be log-concave functions that attain their maximum value of 1 at the origin. There exist  $u_1,u_2 \in SL_n$ such that $\Tilde{{f}_i} = f_i \circ u_i $ for $i =1,2$,
\begin{equation}
    \label{eq: reverse-PL_0}
    \|\Tilde{{f}_1} \star_0 \Tilde{{f}_2}\|_1^{\frac{1}{n}}\leq C \left(\| {f}_1\|_1^{\frac{1}{n}}+\| {f}_2\|_1^{\frac{1}{n}}\right),
    \end{equation}
    where $C>0$ is an absolute constant.

\begin{theorem}
    \label{thm: reverse-PL_alpha}
    Fix $\alpha \in (-1/n,0] $ and let $f_1,f_2 \in C_\alpha (\R^n)$. Then, there exist $u_1,u_2 \in SL_n$ such that $\Tilde{f}_i = f_i \circ u_i $ for $i =1,2$,
    \begin{equation}
    \label{eq: reverse-PL_alpha}
        \|\Tilde{{f}_1} \star_\alpha \Tilde{{f}_2}\|_1^{\frac{1}{n}}
        \leq
        \frac{C}{\left(\prod_{j=1}^n(1+j \alpha)\right)^{1/n}}\left(\| {f}_1\|_1^{\frac{1}{n}}+\| {f}_2\|_1^{\frac{1}{n}}\right),
    \end{equation}
    where $C>0$ is an absolute constant.
\end{theorem}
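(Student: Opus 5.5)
The plan is to reduce to the log-concave result \eqref{eq: reverse-PL_0} of Klartag and Milman via the transform $B_\alpha$, in the same way Theorem~\ref{thm: Plu-Ruz-alpha} was derived from Theorem~\ref{thm: Plu-Ruz-0}. For $\alpha=0$ the statement is exactly \eqref{eq: reverse-PL_0}, so assume $-1/n<\alpha<0$. Set $F_i=B_\alpha f_i=e^{-\operatorname{base}_\alpha f_i}$. These are log-concave functions with maximum value $1$ at the origin, since $\operatorname{base}_\alpha f_i$ is convex, lower semicontinuous, and has minimum $0$ at $0$. Applying \eqref{eq: reverse-PL_0} to $F_1,F_2$ gives $u_1,u_2\in SL_n$ such that
\[
\|(F_1\circ u_1)\star_0(F_2\circ u_2)\|_1^{1/n}\le C\bigl(\|F_1\|_1^{1/n}+\|F_2\|_1^{1/n}\bigr).
\]
We then choose $\tilde f_i=f_i\circ u_i$ with these same maps.

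\textbf{Step 1 (composition with $u_i$).} Since $\operatorname{base}_\alpha(f\circ u)=(\operatorname{base}_\alpha f)\circ u$, we have $B_\alpha(\tilde f_i)=F_i\circ u_i$. Moreover $\tilde f_i\in C_\alpha(\R^n)$, because $u_i$ is linear and invertible.

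\textbf{Step 2 (lower bound for the left side).} By \eqref{eq:03},
\[
B_\alpha(\tilde f_1\star_\alpha\tilde f_2)=B_\alpha\tilde f_1\star_0 B_\alpha\tilde f_2=(F_1\circ u_1)\star_0(F_2\circ u_2).
\]
The function $\tilde f_1\star_\alpha\tilde f_2$ lies in $C_\alpha(\R^n)$, so Corollary~\ref{cor:01} applies to it and yields
\[
\|\tilde f_1\star_\alpha \tilde f_2\|_1\le \frac{1}{\prod_{j=1}^n(1+j\alpha)}\,\|(F_1\circ u_1)\star_0(F_2\circ u_2)\|_1 .
\]

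\textbf{Step 3 (combine).} Take $n$-th roots, apply the Klartag--Milman bound, and finish with \eqref{eq:02}, which gives $\|F_i\|_1\le\|f_i\|_1$. The result is \eqref{eq: reverse-PL_alpha} with the same absolute constant $C$.

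\textbf{Main obstacle.} The only delicate point is checking that Corollary~\ref{cor:01} applies to $\tilde f_1\star_\alpha\tilde f_2$. This requires showing that the function is upper semicontinuous, equal to $1$ at $0$, and tends to $0$ at infinity. Equivalently, the infimal convolution of the two bases must be lower semicontinuous and coercive.

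Coercivity holds because each base is convex with minimum $0$ attained only near the origin and tends to infinity. Such functions grow at least linearly, so their infimal convolution is coercive. Coercivity together with lower semicontinuity of the bases makes the infimum attained, which gives lower semicontinuity.

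If one prefers to avoid this, Theorem~\ref{thm:monotonicity-alpha-base} can be applied directly to $\varphi=\operatorname{base}_\alpha\tilde f_1\mathbin\square\operatorname{base}_\alpha\tilde f_2$. Its proof uses only convexity, $\varphi(0)=0$, and measurability of the sublevel sets, since Lemma~\ref{lem:monotonicity-V} needs just these. Here $\operatorname{base}_\alpha(\tilde f_1\star_\alpha\tilde f_2)=\varphi$ by definition.
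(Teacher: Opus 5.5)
Your proposal is correct and follows the paper's proof essentially step for step. Like the paper, you choose $u_i$ by applying Klartag--Milman to $B_\alpha f_i$, use $B_\alpha(f_i\circ u_i)=(B_\alpha f_i)\circ u_i$, apply the reverse comparison $\|f\|_1\le \bigl(\prod_{j=1}^n(1+j\alpha)\bigr)^{-1}\|B_\alpha f\|_1$ to $\tilde f_1\star_\alpha\tilde f_2$ together with $B_\alpha(\tilde f_1\star_\alpha\tilde f_2)=B_\alpha\tilde f_1\star_0 B_\alpha\tilde f_2$, and finish with $\|B_\alpha f_i\|_1\le\|f_i\|_1$ (which the paper phrases via $\det u_i=1$). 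Your additional check that $\tilde f_1\star_\alpha\tilde f_2$ is admissible for that comparison, i.e.\ lower semicontinuity and coercivity of the infimal convolution of the bases, fills in a detail the paper leaves implicit.
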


\begin{proof}
    We may assume $-1 / n<\alpha<0$. Apply \eqref{eq: reverse-PL_0} to the log-concave functions $B_\alpha f_1$ and $B_\alpha f_2$, and choose $u_1, u_2 \in S L_n$ accordingly. Note that $B_\alpha\left(f_i \circ u_i\right)=\left(B_\alpha f_i\right) \circ u_i$.
    Using Corollary \ref{cor:01}, and definition \eqref{def:alpha-sum-original}, we obtain
    \begin{align}
        \| \Tilde{{f}_1} \star_\alpha \Tilde{{f}_2}\|_1^{\frac{1}{n}}
        &\leq
        \left(\frac{1}{\prod_{j=1}^n(1+j \alpha)}\right)^{1/n}
        \| B_\alpha (\Tilde{{f}_1} \star_\alpha \Tilde{{f}_2} )\|_1^{\frac{1}{n}}
        \\
        &=
        \left(\frac{1}{\prod_{j=1}^n(1+j \alpha)}\right)^{1/n}
        \| B_\alpha \Tilde{{f}_1}   \star_0 B_\alpha \Tilde{{f}_2} \|_1^{\frac{1}{n}} .
    \end{align}
    Applying \eqref{eq: reverse-PL_0}, we get
    \begin{align}
        \| \Tilde{{f}_1} \star_\alpha \Tilde{{f}_2}\|_1^{\frac{1}{n}}
        &\leq
        \frac{C}{\left(\prod_{j=1}^n(1+j \alpha)\right)^{1/n}}
        \left( \|B_\alpha \Tilde{{f}_1}  \|_1^{1/n} + \|B_\alpha \Tilde{{f}_2}  \|_1^{1/n}\right)
        \nonumber
        \\
        &\leq
        \frac{C}{\left(\prod_{j=1}^n(1+j \alpha)\right)^{1/n}}
        \left( \|\Tilde{f}_1 \|_1^{1/n} +\|\Tilde{f}_2 \|_1^{1/n} \right)
        \\
        &=
        \frac{C}{\left(\prod_{j=1}^n(1+j \alpha)\right)^{1/n}}
        \left( \|{f}_1 \|_1^{1/n} +\|{f}_2 \|_1^{1/n} \right).
    \end{align}
    Here, the second inequality uses \eqref{eq:02}, and the last equality follows from $\operatorname{det} u_i=1$.
\end{proof}

\begin{remark}
    We note that for log-concave functions, taking the limit $\alpha \to 0^-$ in \eqref{eq: reverse-PL_alpha} recovers the reverse Pr\'ekopa–Leindler inequality established in \cite{KM-05}.
\end{remark}

\begin{remark}
    Bobkov and the first-named author \cite{BM11-cras} obtained an entropic lifting for log-concave densities, showing that a reverse entropy power inequality holds in this setting.
    Subsequently, in \cite{BM-12}, they obtained a more general result for $\alpha$-concave densities with $\alpha<0$, analogous to Theorem~\ref{thm: reverse-PL_alpha}. Finally, in \cite{BM-13}, they showed that a reverse entropy power inequality cannot hold if $\alpha=-1/n$.
\end{remark}

\section{Rogers--Shephard-type inequality for functions} \label{sec:RS}

Let us begin with a functional extension of \eqref{eq:RS-bodies} for $\alpha$-concave functions, due to Colesanti \cite{C-06}. Recall that $\Bar{f} (x) = f(-x)$. The difference function of order $\alpha \in [-\infty,0]$ of a function $f$ is
\begin{equation}
    \label{df:alpha-sum-Colestanti}
    \Delta_\alpha f(x) = \sup_{2x = y+z} M_\alpha (f(y),\Bar f(z)),
\end{equation}
where $M_\alpha (a,b)$ is the mean of order $\alpha$ defined by
\[
     M_\alpha (a,b) =
     \begin{cases}
        \sqrt{ab} \quad & \alpha = 0,
        \\
        \left(\frac{a^\alpha+b^\alpha}{2} \right)^{1/\alpha} \quad & -\infty <\alpha < 0,
        \\
        \min\{a,b\} \quad& \alpha = -\infty.
     \end{cases}
\]

Let $\mathcal{C} (n,\alpha)$ be the smallest possible constant such that for any integrable  $f \in C_\alpha (\R^n)$ with full-dimensional support,
\begin{equation}
    \label{const:alpha-sum-Colestanti}
    \| \Delta_\alpha f \|_1 \leq \mathcal{C} (n,\alpha)\|f\|_1.
\end{equation}
We note that in \cite{C-06}, the functions under consideration may be unbounded. However, in determining $\mathcal{C}(n, \alpha)$, it is enough to consider functions in $C_\alpha\left(\mathbb{R}^n\right)$. Indeed, if $f$ is bounded, then translating a point at which $f$ attains its maximum to the origin and dividing $f$ by this maximum yields a function in $C_\alpha\left(\mathbb{R}^n\right)$. The inequality is invariant under translations and rescaling.
If $f$ is unbounded, for each $m>0$, define $f_m=\min \{f, m\}.$ Then, $f_m$ is also $\alpha$-concave and converges pointwise to $f$ as $m \to \infty$. After translation and scaling, we again obtain a function in $C_\alpha (\R^n)$. Moreover, $\Delta_\alpha f_m $ converges to $ \Delta_\alpha f$ pointwise as $m \to \infty$. Hence, applying the monotone convergence theorem to both sides gives the desired inequality for $f$.

It was proved in  \cite{C-06} that
    $\mathcal{C} (n,\alpha) \leq \frac{1}{2^{n+1 / \alpha}}\binom{2 n}{n}.$
Moreover, the constant is known precisely in the limiting cases,
\begin{equation}
\label{eq:C-06-sharp}
\mathcal{C} (n,-\infty) = \frac{1}{2^n} \binom{2n}{n}\quad \text{and} \quad
\mathcal{C} (n,0) = 2^n.
\end{equation}
In addition, in dimension one, the constants have been determined explicitly
$$
   \mathcal{C} (1, \alpha)=
    \begin{cases}
        2,  & \quad \alpha \in(-1,0),
        \\
        \frac{1}{2^{1 / \alpha}} & \quad \alpha \in(-\infty,-1] .
    \end{cases}
$$
The operation used in this work differs from Colesanti's difference function. The mean $M_\alpha(a, b)$, and hence $\Delta_\alpha f$, is non-decreasing in $\alpha$, whereas Proposition \ref{prop:monotonicity} shows that $f \star_\alpha \bar f$ is non-increasing in $\alpha$. At $\alpha=-\infty$, the two operations agree after dilation,
$$
\Delta_{-\infty} f(x / 2)=(f \oplus \bar{f})(x) .
$$
Thus, the first equality in \eqref{eq:C-06-sharp} can be written as
\begin{equation}
    \label{eq:RS-infty}
    \| f \oplus \Bar{f} \|_1 \leq \binom{2n}{n} \| f\|_1,
\end{equation}
where equality occurs precisely when every level set $U_t(f)$ is a simplex for almost every $t > 0$.

The case of log-concave functions under the Asplund product was proved in \cite[Theorem~2.2]{AGJV-16}, where the equality is attained when the function is a characteristic function of a simplex. Specially, for any $f \in C_0(\R^n)$ with full-dimensional support, we have
\begin{equation}
    \label{eq:RS-ineq-Logconcave}
    \|f \star_0 \Bar{f} \|_1 \leq \binom{2n}{n} \|f\|_1.
\end{equation}
For $n\geq 2$, equality holds if and only if $f$ is a characteristic function of a simplex.

The next result interpolates between \eqref{eq:RS-infty} and \eqref{eq:RS-ineq-Logconcave} for the $\alpha$-sum. The inequality follows immediately from \eqref{eq:RS-infty} and Proposition \ref{prop:monotonicity}. We include the proof to determine the equality cases.

\RSAlphaSum*

\begin{proof}
    By Proposition \ref{prop:monotonicity}, the layer-cake representation \eqref{mix-integral-for} and the level-set identity \eqref{sum-Uf}, we obtain
    \begin{align}
        \|f \star_{\alpha} \Bar{f}\|_1  &\leq
		\|f \oplus \Bar{f}\|_1 \label{eq:mono-rs}
        = \int_0^1 |U_t(f\oplus \Bar{f} ) | \;\mathrm dt
		= \int_0^1 |U_t(f) + U_t(\Bar{f})| \;\mathrm dt
        \\
        &= \int_0^1 |U_t(f) - U_t({f})| \;\mathrm dt.
	\end{align}
    Since $ U_t(f) $ are compact, convex sets for all $ t >0 $, we apply the classical Rogers--Shephard inequality \eqref{eq:RS-bodies} and use the layer-cake representation \eqref{mix-integral-for},
	\begin{align}
        \|f \star_{\alpha} \Bar{f}\|_1
		&\leq \int_0^1 |U_t(f) - U_t(f)| \;\mathrm dt
		\leq \binom{2n}{n} \int_0^1 |U_t(f)|\;\mathrm dt \label{eq:RS-0-eqcase}
		= \binom{2n}{n} \|f\|_1.
	\end{align}
    For the equality case, we first treat the case $n\geq 2$. For $\alpha=0$, this is simply the equality already obtained.  Hence, we need to check only for $\alpha <0$.
    If $f$ is the characteristic function of a simplex, \eqref{RS-QC-eq-intro} reduces to the classical Rogers--Shephard inequality \eqref{eq:RS-bodies}, and equality holds.

    Conversely, assume that equality holds in \eqref{RS-QC-eq-intro}.
    Thus, the equality in the first inequality \eqref{eq:mono-rs} gives $f \star_{\alpha} \Bar{f}  = f \oplus \Bar{f}$  almost everywhere, while equality in the second inequality \eqref{eq:RS-0-eqcase} implies that $U_t(f)$ is a simplex for almost every $t \in [0,1]$.
    We claim that almost everywhere on $\R^n$,
    \begin{equation}
        B_\alpha f \star_{0} \overline{B_\alpha f}  = B_\alpha f  \oplus \overline{B_\alpha f}. \label{eq:pt-Karamata-zero}
    \end{equation}
    For almost every $x$, the equality $f \star_\alpha \bar{f}=f \oplus \bar{f}$, together with the fact that $1 / \alpha<0$, is equivalent to

    \begin{align}
        (f \star_{\alpha} \Bar{f} )(x)  &= (f \oplus \Bar{f} )(x) \label{eq:pt-Karamata-alpha}
        \\
        \sup_y \left(f^\alpha (y)+ f^\alpha (y-x) -1 \right)^{1/\alpha} &= \sup_y \min \{f (y), f (y-x) \}
        \\
        \inf_y (f^\alpha (y)+ f^\alpha (y-x) -1 )&= \inf_y \max \{f^\alpha (y), f^\alpha (y-x) \}
        \\
        \sup_y (1- f^\alpha (y)+ 1- f^\alpha (y-x))  &= \sup_y \min \{1- f^\alpha (y), 1- f^\alpha (y-x) \}.
    \end{align}
    Here, the second step uses that $\frac{1}{\alpha} < 0$. Since $f^\alpha = 1-\alpha \operatorname{base}_\alpha f$ and $B_\alpha f = e^{- \operatorname{base}_\alpha f}$, the last equality is equivalent to
    \begin{align}
        \inf_y \left((\operatorname{base}_\alpha f) (y) + (\operatorname{base}_\alpha f) (y-x) \right) &= \inf_y \max \{(\operatorname{base}_\alpha f) (y) ,(\operatorname{base}_\alpha f) (y-x)  \}
        \\
        \sup_y \left(B_\alpha f(y) \cdot  B_\alpha f(y-x) \right) &= \sup_y \min \{ B_\alpha f(y), B_\alpha f(y-x) \}
        \\
        B_\alpha f \star_{0} \overline{B_\alpha f}  &= B_\alpha f  \oplus \overline{B_\alpha f} .
    \end{align}
    The function $B_\alpha f$ is log-concave and continuous on its full-dimensional support. Moreover, its level set
    $$
        U_{t} \left( B_\alpha f\right) = \{ B_\alpha f \geq t\} = \{  \operatorname{base}_\alpha f\leq -\log t\} =\{ f\geq (1+\alpha \log t)^{1 / \alpha}
        \}.
    $$
    are simplices for almost every $t \in [0,1]$. Hence, $B_\alpha f$ satisfies the equality case of \eqref{eq:RS-ineq-Logconcave}. Indeed, repeating the inequality argument with $f$ replaced by $B_\alpha f$ and $\alpha = 0$,
    \[
        \left\|B_\alpha f \star_0 \overline{B_\alpha f}\right\|_1=\left\|B_\alpha f \oplus \overline{B_\alpha f}\right\|_1=\binom{2 n}{n}\left\|B_\alpha f\right\|_1.
    \]
    It follows from the equality case of \eqref{eq:RS-ineq-Logconcave} that $B_\alpha f$ has to be a characteristic function of a simplex, and therefore $f$ is as well.

    We now consider $n=1$.  First, suppose that  $f$ is monotone on its support. Using translation and reflection, if necessary, we may assume that the support of $f$ belongs to $[0, \infty)$ and $f$ is non-increasing on $[0, \infty)$. For $\alpha <0$,
    we have
    \begin{equation}
    \label{eq:explicit-form-some-alpha-sum-in-pf}
        (f \star_\alpha \Bar f) (x)
        =\sup_{ y +z =x } (f(y)^\alpha + f(-z)^\alpha -1)^{1/\alpha}
        .
    \end{equation}
    At $\alpha=0$, the expression in parentheses is replaced by $f(y) f(-z)$. Only pairs satisfying $y \geq 0$ and $z \leq 0$ need to be considered. Otherwise $f(y) = 0$ or $f(-z) = 0$, and by convention, the corresponding value in the supremum is zero.
\begin{enumerate}[wide =0pt, labelwidth= .9cm, leftmargin=1.55cm, label=\textbf{Case \arabic*}:]
        \item Suppose that $x \geq 0$. Hence, $ y = x-z \geq x$. Since $f$ is non-increasing on $[0,\infty)$, $f(y) \leq f(x).$ Since $f(-z) \leq 1$, we obtain
    \[
        (f(y)^\alpha + f(-z)^\alpha -1)^{1/\alpha} \leq f(x).
    \]
    Taking $y= x$ and $z= 0$, we obtain equality and so $(f \star_\alpha \Bar f) (x) = f(x)$.
    \item Suppose that $x < 0$. Hence, $ -z = -x+y \geq -x$. Since $f$ is non-increasing on $[0,\infty)$, $f(-z) \leq f(-x).$ Since $f(y) \leq 1$, we obtain
    \[
        (f(y)^\alpha + f(-z)^\alpha -1)^{1/\alpha} \leq f(-x).
    \]
    Taking $y= 0$ and $z= x$, we obtain equality and so $(f \star_\alpha \Bar f) (x) = f(-x)$.
    \end{enumerate}
    Hence,
    \[
        (f  \star_\alpha \bar f) (x) = \begin{cases}
            f(x) & x\geq 0,
            \\
            f(-x) & x< 0.
        \end{cases}
    \]
    Therefore, $\|f \star_\alpha \bar{f}\|_1=2\|f\|_1$. The same conclusion follows at $\alpha=0$ from the product formula.

    Conversely, assume that the equality in \eqref{RS-QC-eq-intro} holds in dimension one. Using translation if needed, we may assume that
    $
        U_1 (f) = [0,x_1], $ for some $ x_1 \geq 0.
    $
    As in the case $n\geq 2$, the equality forces \eqref{eq:pt-Karamata-zero} to hold for almost every $x \in \R$, that is,
    \begin{equation}
        B_\alpha f \star_{0} \overline{B_\alpha f}  = B_\alpha f  \oplus \overline{B_\alpha f}. \label{eq:pt-Karamata-zero-second-times}
    \end{equation}
    Here, $B_0 f = f$, and
    \[
        U_1 (B_\alpha (f)) = [0,x_1], \quad x_1 \geq 0.
    \]
    It is enough to prove that $B_\alpha f$ is monotone on its support.
    Suppose, toward a contradiction, that $B_\alpha f$ is not monotone on its support. Hence, by log-concavity, there exist
    \begin{center}
        $x_0< 0 \leq x_1 <x_2$\quad  such that  \quad$0<B_\alpha f(x_0),B_\alpha f(x_2) < 1.$
    \end{center}
    Choose $x \in (x_1,x_1 + \min\{-x_0,x_2-x_1\})$ at which \eqref{eq:pt-Karamata-zero-second-times} holds. Thus,
    $B_\alpha f(y)$ and $B_\alpha f(y-x)$ are non-decreasing on $(-\infty,x_1]$, and non-increasing on $[x,\infty)$. Therefore,
    \[
        \bigl(B_\alpha f \star_0 \overline{B_\alpha f}\bigr)(x)
        =
        \sup_{y\in\R} B_\alpha f(y)\,B_\alpha f(y-x)
        =
        \sup_{y\in[x_1,x]} B_\alpha f(y)\,B_\alpha f(y-x).
    \]
    By continuity, the supremum is attained at some $y_0\in[x_1,x]$, that is,
    \[
        \bigl(B_\alpha f \star_0 \overline{B_\alpha f}\bigr)(x)
        =
        B_\alpha f(y_0)\,B_\alpha f(y_0-x).
    \]
    Using \eqref{eq:pt-Karamata-zero-second-times}, we obtain
    \begin{align}
        B_\alpha f(y_0)\,B_\alpha f(y_0-x) &\leq \min\{B_\alpha f(y_0),\,B_\alpha f(y_0-x)\} \leq (B_\alpha f  \oplus \overline{B_\alpha f}) (x)
        \\
        &=  \bigl(B_\alpha f \star_0 \overline{B_\alpha f}\bigr)(x) =B_\alpha f(y_0)\,B_\alpha f(y_0-x)
        .
    \end{align}
    Hence,
    \begin{align} \label{eq:pf-n1}
        B_\alpha f(y_0)\,B_\alpha f(y_0-x)
        =
        \min\{B_\alpha f(y_0),\,B_\alpha f(y_0-x)\}.
    \end{align}
    If $y_0 \in\left(x_1, x\right)$, then the choice of $x$ ensures that $0<B_\alpha f(y_0),B_\alpha f(y_0-x)<1$. This contradicts \eqref{eq:pf-n1}.
    Therefore, $y_0$ is either $x_1$ or $x$, and it follows that
    \[
        \bigl(B_\alpha f \oplus \overline{B_\alpha f}\bigr)(x)
        =
        \max\{B_\alpha f(x),\,B_\alpha f(x_1-x)\}.
    \]
\begin{enumerate}[wide =0pt, labelwidth= .9cm, leftmargin=1.55cm, label=\textbf{Case \arabic*}:]
        \item Suppose that $\bigl(B_\alpha f \oplus \overline{B_\alpha f}\bigr)(x)=B_\alpha f(x)$. For any $ \varepsilon \in (0,x-x_1) $, we have
        \[
            x-\varepsilon=\frac{\varepsilon}{x-x_1} \cdot x_1+\left(1-\frac{\varepsilon}{x-x_1}\right) x.
        \]
        Therefore, by log-concavity of $B_\alpha f$ and the fact that $B_\alpha f (x_1) = 1$,
        \[
            B_\alpha f\left(x-\varepsilon\right) \geq  (B_\alpha f (x))^{1-\frac{\varepsilon}{x-x_1}} > B_\alpha f (x).
        \]
        Also, since $0$ is an interior point of the support of $B_\alpha f$,
        $$
            \lim_{\varepsilon \to 0^+} B_\alpha f (-\varepsilon) = B_\alpha f (0) = 1 > B_\alpha f (x).
        $$
        Hence, for small enough $\varepsilon >0$,
        \[
            B_\alpha f(x) < \min\{B_\alpha f(-\varepsilon),\,B_\alpha f(x-\varepsilon)\} \leq \bigl(B_\alpha f \oplus \overline{B_\alpha f}\bigr)(x) = B_\alpha f (x),
        \]
         which is a contradiction.
         \item Suppose that $\bigl(B_\alpha f \oplus \overline{B_\alpha f}\bigr)(x)=B_\alpha f(x_1-x)$. For any $ \varepsilon \in (0,x-x_1) $, we have
        \[
            x_1-x+\varepsilon=\frac{\varepsilon}{x-x_1} \cdot 0+\left(1-\frac{\varepsilon}{x-x_1}\right) (x_1-x).
        \]
        Therefore, by log-concavity of $B_\alpha f$ and the fact that $B_\alpha f (0) = 1$,
        \[
            B_\alpha f\left(x_1-x+\varepsilon\right) \geq  (B_\alpha f (x_1-x))^{1-\frac{\varepsilon}{x-x_1}} > B_\alpha f (x_1-x).
        \]
        Also, since $x_1$ is an interior point of the support of $B_\alpha f$,
        $$
            \lim_{\varepsilon \to 0^+} B_\alpha f (x_1+\varepsilon) = B_\alpha f (x_1) = 1 > B_\alpha f (x_1-x).
        $$
        Hence, for small enough $\varepsilon>0$,
        \begin{align}
            B_\alpha f(x_1-x) &< \min\{B_\alpha f(x_1+\varepsilon),\,B_\alpha f(x_1-x+\varepsilon)\} \leq \bigl(B_\alpha f \oplus \overline{B_\alpha f}\bigr)(x)
            \\
            &= B_\alpha f (x_1-x),
        \end{align}
         which is a contradiction.
    \end{enumerate}
    Combining these two cases, this shows that $f$ must be monotone on its support. \qedhere
\end{proof}

\section{Functional sum-difference inequalities}
\label{sec:sumdiff}

\LitvakFuncQuasi*

\begin{proof}
    Using the layer-cake representation \eqref{mix-integral-for} and the level set identity \eqref{sum-Uf}, we obtain
    \begin{align*}
        \| f \oplus g\|_1 &=\int_0^1|U_t(f \oplus {g})| \;\mathrm dt
        \\
        &=
        \int_0^1|U_t(f)+ U_t({g})| \;\mathrm dt .
    \end{align*}
    Now, we apply Litvak's observation \eqref{eq:Litvak Ob} and use the layer-cake representation \eqref{mix-integral-for},
    \begin{align*}
        \| f \oplus g\|_1
        &\leq
        \frac{1}{2^n}\binom{2 n}{n} \int_0^1|U_t(f) -  U_t({g})| \;\mathrm dt
        \\
        &=\frac{1}{2^n}\binom{2 n}{n} \| f \oplus \Bar{g}\|_1.
    \end{align*}
    If $f = \bar g$, and its superlevel sets are simplices for almost every $t>0$, then $ -U_t ( g) = U_t (f).$ Therefore, the equality condition in Litvak's observation is satisfied for almost every level $t$, and hence the inequality above is an equality.
\end{proof}

The following estimate is the two-family analogue of the argument in the work of Kotrbat\'y and Mouamine \cite{KM-26}.

\begin{lemma} \label{lem:KM-lp}
    Let $\varphi_1,\varphi_2 : \R^n \to [0,\infty]$  be lower semicontinuous, convex functions satisfying
    \begin{center}
        $\varphi_1(0) = \varphi_2(0) = 0$\quad and \quad$\varphi_1(x), \varphi_2 (x) \to + \infty$ as $|x| \to \infty$.
    \end{center}
    For $r >0$ and $0 \leq t\leq 1$, define
    \[
        C_t(r) = \{x: \varphi_1(x) \leq rt\} + \{x: \varphi_2(x) \leq r(1-t)\}.
    \]
    Then,
    \[
        \frac{1}{n+1} \left|\bigcup_{0 \leq t \leq 1} C_t(r) \right|
        \leq
        \int_0^1\left|C_t(r)\right| \;\mathrm d t
        \leq
        \left|\bigcup_{0 \leq t \leq 1} C_t(r)\right|.
    \]
\end{lemma}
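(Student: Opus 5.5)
The upper bound is immediate: for each $t\in[0,1]$ we have $C_t(r)\subset \bigcup_{0\le s\le 1} C_s(r)$, so $|C_t(r)|\le |\bigcup_s C_s(r)|$. Integrating over $t\in[0,1]$ gives the right-hand inequality. The work is in the lower bound, which I plan to obtain by lifting to $\R^{n+1}$ in the spirit of Kotrbat\'y--Mouamine \cite{KM-26}. Throughout, write $K_i(s)=\{x:\varphi_i(x)\le s\}$. These are compact convex sets containing $0$, increasing in $s$, and by convexity of $\varphi_i$ we have
\[
\lambda K_i(s)+(1-\lambda)K_i(s')\subset K_i(\lambda s+(1-\lambda)s').
\]

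The plan is to define $\Phi(x)=(\varphi_1\mathbin\square\varphi_2)(x)$ and to identify the union first. A point $x$ lies in $C_t(r)$ for some $t$ exactly when $x=y+z$ with $\varphi_1(y)+\varphi_2(z)\le r$. By lower semicontinuity and coercivity the infimum in $\Phi$ is attained, so
\[
\bigcup_{0\le t\le 1} C_t(r)=\{\Phi\le r\}=:L.
\]
Next I will consider the set
\[
M=\{(x,t)\in\R^n\times[0,1]: x\in C_t(r)\}.
\]
I claim $M$ is convex. Take $(x,t),(x',t')\in M$ with $x=y+z$, $x'=y'+z'$. The convex combinations of the $y$'s and of the $z$'s land in $K_1(r(\lambda t+(1-\lambda)t'))$ and $K_2(r(1-\lambda t-(1-\lambda)t'))$ respectively, by the inclusion displayed above. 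By Fubini, $|M|=\int_0^1|C_t(r)|\,dt$, and the projection of $M$ onto $\R^n$ is $L$.

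For the lower bound $|M|\ge \frac1{n+1}|L|$, I will use the following observation. Given $x\in L$, pick $t_x$ with $x\in C_{t_x}(r)$. Since $0\in C_s(r)$ for every $s$ (both sublevel sets contain $0$), convexity of $M$ shows that $M$ contains the convex hull of $(x,t_x)$ together with the segment $\{0\}\times[0,1]$. A cleaner route is a Rogers--Shephard type section/projection estimate: for a convex body $M\subset\R^{n+1}$ of "height" $1$ in the $t$-direction whose fibers over every point of its projection are nonempty, one has $|M|\ge \frac{1}{n+1}\,|P M|\cdot h$, where $h$ is the length of the longest fiber in the $t$-direction. Here the fiber over $0$ is the whole segment $\{0\}\times[0,1]$, so $h=1$.

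The key step I expect to be the main obstacle is justifying this bound. I intend to do it by a cone argument. $M$ contains $\operatorname{conv}\bigl((\{0\}\times[0,1])\cup\{(x,t_x):x\in L\}\bigr)$. Now fix $x\in L$ and look at the fiber of $M$ over $\lambda x$: it contains $\lambda t_x+(1-\lambda)[0,1]$, an interval of length $1-\lambda$. Integrating in polar-type coordinates over the star-shaped set $L$ (it contains $0$ and is convex), the volume is at least
\[
\int_L (1-\|x\|_L)\,dx=|L|\int_0^1 n\rho^{n-1}(1-\rho)\,d\rho=\frac{|L|}{n+1},
\]
which gives the left inequality. The care needed is measurability and the choice of $t_x$, but only fiber lengths are used, and those follow from convexity alone.
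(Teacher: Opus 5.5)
Your proposal is correct and is essentially the paper's proof. The paper also shows that $\Sigma_r=\{(x,t):x\in C_t(r)\}$ is convex, via the compact convex lift $\widetilde\Sigma_r\subset\R^n\times\R^n\times[0,1]$, which additionally gives compactness so that $x/\|x\|_{S_r}\in S_r$. It then obtains the same fiber interval $[\|x\|t_0,\,1-\|x\|(1-t_0)]$, of length $1-\|x\|_{S_r}$, that your cone argument produces, and integrates $1-\|x\|_{S_r}$ over $S_r$ to get $|S_r|/(n+1)$. Your Rogers--Shephard section/projection detour is unnecessary, and the closedness you leave implicit is harmless: use the paper's compactness argument, or let $\lambda\downarrow\|y\|_L$.
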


\begin{proof}
    Denote
    \[
        S_r:=\bigcup_{0 \leq t \leq 1} C_t(r).
    \]
    We first show that $S_r$ is compact and convex.
    Define
$$
\widetilde{\Sigma}_r=\left\{(a, b, t) \in \mathbb{R}^n \times \mathbb{R}^n \times[0,1]: \varphi_1(a) \leq r t, \varphi_2(b) \leq r(1-t)\right\} .
$$
The functions
$$
F_1(a, b, t) = \varphi_1(a)-r t, \quad F_2(a, b, t) = \varphi_2(b)-r(1-t)
$$
are lower semicontinuous. Since $ \widetilde{\Sigma}_r = \{F_1\leq 0\} \cap \{ F_2 \leq 0\}$,  the set $\widetilde{\Sigma}_r$ is closed. Moreover, since
$$
\widetilde{\Sigma}_r \subset \left\{x \in \mathbb{R}^n: \varphi_1(x) \leq r\right\} \times \left\{x \in \mathbb{R}^n: \varphi_2(x) \leq r\right\} \times[0,1],
$$
the set $\widetilde{\Sigma}_r$ is compact. It is also convex.  To see this,
    let $(a_1,b_1,t_1),(a_2,b_2,t_2) \in \widetilde{\Sigma}_r$. Then,
    \[
    \varphi_1(a_i) \leq rt_i \quad \varphi_2(b_i) \leq r(1-t_i) \qquad \text{for } i =1,2.
    \]
    Let $\lambda \in [0,1]$. Then,
    \[
        \varphi_1 (\lambda a_1 +(1-\lambda)a_2) \leq \lambda \varphi_1\left(a_1\right)+(1-\lambda) \varphi_1\left(a_2\right) \leq r (\lambda t_1+(1-\lambda) t_2),
    \]
    and
    \[
        \varphi_2 (\lambda b_1+(1-\lambda) b_2) \leq \lambda \varphi_2\left(b_1\right)+(1-\lambda) \varphi_2\left(b_2\right) \leq r(1-(\lambda t_1+(1-\lambda) t_2)).
    \]
    Hence, $\widetilde{\Sigma}_r$ is convex.
    Consider the linear maps
    $$
    P(a, b, t)=a+b, \quad Q(a, b, t)=(a+b, t) .
    $$
    Then
    $$
    P\left(\widetilde{\Sigma}_r\right)=S_r
    , \qquad
    Q\left(\widetilde{\Sigma}_r\right)=\Sigma_r:=\left\{(x, t) \in \mathbb{R}^n \times[0,1]: x \in C_t(r)\right\} .
    $$
    Consequently, both $S_r$ and $\Sigma_r$ are compact and convex.
    For $x \in S_r$, define
    $$
    I_x=\left\{t \in[0,1]: x \in C_t(r)\right\} .
    $$
    Since $\Sigma_r$ is compact and convex, $I_x$, being a one-dimensional section of $\Sigma_r$, is a compact interval.

    Now, we move on to the inequalities. The right inequality is immediate. Indeed, since $C_t(r) \subset S_r$ for any $t \in [0,1]$, we have
    \[
        \int_0^1\left|C_t(r)\right| \;\mathrm d t
        \leq
        \int_0^1\left|S_r\right| \;\mathrm d t
        =
        \left|S_r\right|.
    \]
    We next prove the left inequality. Recall the Minkowski functional of $S_r$,
    $$
    \|x\|_{S_r}:=\inf \left\{\lambda \geq 0: x \in \lambda S_r\right\}.
    $$
    Because $0 \in S_r$, we have $0 \leq \|x\|_{S_r} \leq 1$ for $x \in S_r$. We claim that for $x \in S_r$,
    \[
        |I_x| \geq 1-\|x\|_{S_r}
    \]
   \begin{enumerate}[wide =0pt, labelwidth= .9cm, leftmargin=1.55cm, label=\textbf{Case \arabic*}:]
        \item Let $x \in S_r \setminus\{0\}$. Since  $\frac{x}{\|x\|_{S_r}} \in S_r = P\left(\widetilde{\Sigma}_r\right)$, there exist $a,b \in \R^n$ and $t_0 \in [0,1]$ such that
    \[
        \frac{x}{\|x\|_{S_r}}=a+b, \quad \varphi_1(a) \leq r t_0, \quad \varphi_2(b) \leq r\left(1-t_0\right).
    \]
    Since $\varphi_1(0)=\varphi_2(0)=0$, the convexity of $\varphi_1$ and $\varphi_2$ gives
    $$
    \varphi_1(\|x\|_{S_r} a) \leq \|x\|_{S_r} \varphi_1(a) \leq r \|x\|_{S_r} t_0
    , $$
    and
    $$
    \varphi_2(\|x\|_{S_r} b) \leq \|x\|_{S_r} \varphi_2(b) \leq r \|x\|_{S_r}\left(1-t_0\right) .
    $$
    Let $t \in\left[\|x\|_{S_r} t_0, 1-\|x\|_{S_r}\left(1-t_0\right)\right]$. Then,
    \begin{center}
        $\varphi_1(\|x\|_{S_r} a) \leq rt$\quad and \quad$ \varphi_2(\|x\|_{S_r} b) \leq r \left(1-t\right)$.
    \end{center}
    Since $x = \|x\|_{S_r}a+\|x\|_{S_r}b$, by definition of $C_t(r)$, we get that $x \in C_t(r)$ and so $t \in I_x$. Hence,
    \[
        \left[\|x\|_{S_r} t_0, 1-\|x\|_{S_r}\left(1-t_0\right)\right] \subset I_x.
    \]
    It follows that
    \[
        |I_x| \geq 1-\|x\|_{S_r}.
    \]
    \item For $x=0$, since $\varphi_1(0)=\varphi_2(0)=0$, we have $0 \in C_t(r)$ for every $t \in[0,1]$. Thus,
$
I_0=[0,1]
$
and
$
\left|I_0\right|=1=1-\|0\|_{S_r} .
$
    \end{enumerate}
This proves the claim, that is,
$$
\left|I_x\right| \geq 1-\|x\|_{S_r}, \quad x \in S_r.
$$
Since $\Sigma_r$ is compact and therefore measurable, Tonelli's theorem yields
$$
\begin{aligned}
\int_0^1\left|C_t(r)\right|  \;\mathrm{d} t
& =\int_{\mathbb{R}^n} \int_0^1 {1}_{\Sigma_r}(x, t) \;\mathrm{d} t \;\mathrm{d} x
=\int_{S_r}\left|I_x\right|  \;\mathrm{d} x
& \geq \int_{S_r}1-\|x\|_{S_r}  \;\mathrm{d} x.
\end{aligned}
$$
Using layer-cake representation, we obtain
\begin{align}
\int_{S_r}1-\|x\|_{S_r}  \;\mathrm{d} x &= \int_{S_r}\int_0^1 {1}_{\{\|x\|_{S_r}  \leq s\}} \;\mathrm{d} s \;\mathrm{d} x
 =\int_0^1\left|\left\{x \in S_r: \|x\|_{S_r}  \leq s\right\}\right| \mathrm{d} s \\
& =\int_0^1\left|s S_r\right| \mathrm{d} s
 =\left|S_r\right| \int_0^1 s^n \mathrm{~d} s
 =\frac{\left|S_r\right|}{n+1}.
\end{align}
This proves the left inequality.
\end{proof}

\begin{theorem}
    \label{prop:litvak-fun}
    Let $\alpha \in (-\frac{1}{n},0]$, and let $f,g \in C_{\alpha}(\R^n)$. Then
    \begin{equation} \label{eq:Litvak-alpha}
        \| f \star_\alpha g\|_1 \leq \min \left\{\frac{n+1}{2^n}\binom{2 n}{n},\frac{2^n}{\prod_{j=1}^n(1+j \alpha)} \right\} \| f \star_\alpha \Bar{g}\|_1.
    \end{equation}
\end{theorem}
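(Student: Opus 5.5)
The two quantities in the minimum come from two independent arguments, so I will prove
\[
\|f\star_\alpha g\|_1\le \frac{n+1}{2^n}\binom{2n}{n}\|f\star_\alpha \Bar g\|_1
\quad\text{and}\quad
\|f\star_\alpha g\|_1\le \frac{2^n}{\prod_{j=1}^n(1+j\alpha)}\|f\star_\alpha \Bar g\|_1
\]
separately.

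\emph{Second bound.} I transfer to the log-concave case through $B_\alpha$ and invoke Proposition~\ref{prop:litvak-fun-0}. The case $\alpha=0$ is exactly that proposition, so assume $\alpha\in(-1/n,0)$. By Corollary~\ref{cor:01} and \eqref{eq:03},
\[
\|f\star_\alpha g\|_1\le \frac{1}{\prod_{j}(1+j\alpha)}\|B_\alpha(f\star_\alpha g)\|_1=\frac{1}{\prod_j(1+j\alpha)}\|B_\alpha f\star_0 B_\alpha g\|_1 .
\]
Proposition~\ref{prop:litvak-fun-0}, applied to the functions $B_\alpha f,B_\alpha g\in C_0(\R^n)$, bounds the right-hand side by $\frac{2^n}{\prod_j(1+j\alpha)}\|B_\alpha f\star_0\overline{B_\alpha g}\|_1$. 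Next note that $\overline{B_\alpha g}=B_\alpha\Bar g$, since $\operatorname{base}_\alpha$ commutes with reflection. Using \eqref{eq:03} again gives $B_\alpha f\star_0 B_\alpha\Bar g=B_\alpha(f\star_\alpha\Bar g)$. Finally the pointwise inequality $B_\alpha h\le h$ gives $\|B_\alpha(f\star_\alpha\Bar g)\|_1\le\|f\star_\alpha\Bar g\|_1$, which completes this bound.

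\emph{First bound.} I work at the level of convex bases. Put $\varphi_1=\operatorname{base}_\alpha f$ and $\varphi_2=\operatorname{base}_\alpha g$; then $\operatorname{base}_\alpha\Bar g=\bar\varphi_2$. By \eqref{def:alpha-sum-original}, $\operatorname{base}_\alpha(f\star_\alpha g)=\varphi_1\mathbin\square\varphi_2$, and I claim its sublevel set at level $r$ is exactly
\[
\{\varphi_1\mathbin\square\varphi_2\le r\}=S_r:=\bigcup_{0\le t\le1}C_t(r),
\]
in the notation of Lemma~\ref{lem:KM-lp}. The inclusion $\supseteq$ is clear. For $\subseteq$ I need the infimum in $\varphi_1\mathbin\square\varphi_2$ to be attained, which follows from lower semicontinuity and coercivity of $\varphi_1,\varphi_2$ (minimize over a compact set). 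Given a minimizing pair $x=a+b$ with $\varphi_1(a)+\varphi_2(b)\le r$, choose $t=\varphi_1(a)/r$ when $r>0$. The same holds for $\bar\varphi_2$, giving sets $C'_t(r)=\{\varphi_1\le rt\}-\{\varphi_2\le r(1-t)\}$ and their union $S'_r$.

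Now I chain the estimates:
\[
|S_r|\le(n+1)\int_0^1|C_t(r)|\,\mathrm dt\le (n+1)\frac{1}{2^n}\binom{2n}{n}\int_0^1|C'_t(r)|\,\mathrm dt\le \frac{n+1}{2^n}\binom{2n}{n}|S'_r| .
\]
The first and last steps are the two sides of Lemma~\ref{lem:KM-lp}. The middle step is Litvak's observation \eqref{eq:Litvak Ob}, applied for each $t$ to the compact convex sets $\{\varphi_1\le rt\}$ and $\{\varphi_2\le r(1-t)\}$. These may be lower-dimensional, for example at $t=0$ or $t=1$; there I pass to the limit using small neighbourhoods $A+\varepsilon B_2^n$, $B+\varepsilon B_2^n$, or simply note that this is a null set of $t$ when only the integral matters.

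To finish, I apply Lemma~\ref{lem:sublevel-set-representation} with the profile $s\mapsto(1-\alpha s)^{1/\alpha}$ (or $e^{-s}$ if $\alpha=0$), which is non-increasing and tends to $0$. This gives
\[
\|f\star_\alpha g\|_1=\int_0^\infty|S_r|\,(-F'(r))\,\mathrm dr
\quad\text{and}\quad
\|f\star_\alpha\Bar g\|_1=\int_0^\infty|S'_r|\,(-F'(r))\,\mathrm dr,
\]
where $F$ is that profile. Integrating the pointwise-in-$r$ inequality above yields the first bound, and taking the minimum of the two bounds proves the theorem.

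The step I expect to need the most care is the identification $\{\varphi_1\mathbin\square\varphi_2\le r\}=S_r$, specifically attainment of the infimal convolution and handling values $+\infty$ outside the supports. This is what allows Lemma~\ref{lem:KM-lp} to be applied to the actual superlevel sets of $f\star_\alpha g$.
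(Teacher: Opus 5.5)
Your first bound matches the paper's argument step for step. You identify $\{\varphi_1\mathbin\square\varphi_2\le r\}$ with $S_r$ via attainment of the infimum, apply the left half of Lemma~\ref{lem:KM-lp}, use Litvak's observation for each $t$, apply the right half of Lemma~\ref{lem:KM-lp} to the pair $(\varphi_1,\bar\varphi_2)$, and integrate against $-F'$ via Lemma~\ref{lem:sublevel-set-representation}. Your worry about lower-dimensional level sets is unnecessary: \eqref{eq:Litvak Ob} holds for arbitrary compact convex sets, since its proof is Brunn--Minkowski followed by Rogers--Shephard applied to $A-B$.

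The gap is in the second bound. Your reduction from $\alpha$ to $0$ through $B_\alpha$ is the same as the paper's. However, the log-concave inequality you then cite, Proposition~\ref{prop:litvak-fun-0}, is never proved independently in the paper. It is exactly the $\alpha=0$ case of the theorem you are proving, as you yourself note (``the case $\alpha=0$ is exactly that proposition''). So at $\alpha=0$ your argument is circular, and for $\alpha<0$ it rests on that unproved case.

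This is not a harmless omission. For $n\ge 2$ and $\alpha$ near $0$, the term $2^n/\prod_j(1+j\alpha)$ is strictly smaller than $\frac{n+1}{2^n}\binom{2n}{n}$, so your first bound does not cover this range.

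The missing argument is short. Set $H=B_\alpha f\star_0 B_\alpha g$ and $K=B_\alpha f\star_0\overline{B_\alpha g}$.
\begin{enumerate}
\item Pr\'ekopa--Leindler with $\lambda=1/2$, applied to $H$ and $\bar H$ (which have equal integrals), gives $\|H\|_1\le\|(\frac12\cdot H)\star_0(\frac12\cdot\bar H)\|_1$.
\item The scaling $\lambda\cdot$ distributes over $\star_0$, and $\star_0$ is commutative and associative. Hence $(\frac12\cdot H)\star_0(\frac12\cdot\bar H)=(\frac12\cdot K)\star_0(\frac12\cdot\bar K)=\Delta_0K$.
\item Colesanti's sharp constant $\mathcal C(n,0)=2^n$ from \eqref{eq:C-06-sharp} gives $\|\Delta_0K\|_1\le 2^n\|K\|_1$.
\end{enumerate}
Putting this in place of the citation, and then finishing with $K=B_\alpha(f\star_\alpha\Bar g)\le f\star_\alpha\Bar g$ as you already do, closes the proof.
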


\begin{proof}
    For $\alpha=0$, set $B_0 f=f$ and interpret the first comparison below as equality.
    Using Corollary \ref{cor:01} and definition \eqref{def:alpha-sum-original}, we have
    \[
        \| f \star_\alpha g\|_1 \leq \frac{1}{\prod_{j=1}^n(1+j \alpha)}\left\|B_\alpha (f \star_\alpha g)\right\|_1
          = \frac{1}{\prod_{j=1}^n(1+j \alpha)}\left\|B_\alpha f  \star_0 B_\alpha g \right\|_1.
    \]
    We apply Pr\'ekopa--Leindler inequality  \eqref{eq:pli}, and the result by Colesanti \eqref{eq:C-06-sharp},
    \begin{align}
        \left\|B_\alpha f  \star_0 B_\alpha g \right\|_1
        &\leq
        \left\|\left(\frac{1}{2} \cdot (B_\alpha f  \star_0 B_\alpha g)\right) \star_0\left(\frac{1}{2} \cdot \overline{B_\alpha f  \star_0 B_\alpha g}\right)\right\|_1
        \\
        &=\left\| \left(\frac{1}{2} \cdot B_\alpha f \right) \star_0  \left(\frac{1}{2} \cdot B_\alpha g \right) \star_0 \left(\frac{1}{2} \cdot \overline{B_\alpha f} \right)\star_0 \left(\frac{1}{2} \cdot \overline{B_\alpha g}\right)\right\|_1
        \\
        &=\left\|\left(\frac{1}{2} \cdot (B_\alpha f  \star_0 \overline{B_\alpha g})\right) \star_0\left(\frac{1}{2} \cdot \overline{B_\alpha f  \star_0 \overline{B_\alpha g}}\right)\right\|_1
        \\
        &= \|\Delta_{0} \left( B_\alpha f  \star_0 \overline{B_\alpha g}\right)\|_1
        \\
        &\leq 2^n \|  B_\alpha f  \star_0 \overline{B_\alpha g}\|_1.
    \end{align}
    Combining the previous inequalities with \eqref{eq:02}, we have
    \begin{align}
        \| f \star_\alpha g\|_1 &\leq \frac{2^n}{\prod_{j=1}^n(1+j \alpha)}  \|  B_\alpha f  \star_0 \overline{B_\alpha g}\|_1
        = \frac{2^n}{\prod_{j=1}^n(1+j \alpha)}  \|  B_\alpha (f  \star_\alpha \Bar g)\|_1
        \\
        &\leq \frac{2^n}{\prod_{j=1}^n(1+j \alpha)}  \|  f  \star_\alpha \Bar g\|_1.
    \end{align}
    Now, we prove the first estimate. Let $\varphi_1=\operatorname{base}_\alpha f$ and $\varphi_2=\operatorname{base}_\alpha g$. For $r >0$ and $t \in [0,1]$,  define
    \[
        C_t^+(r) = \{x: \varphi_1(x) \leq rt\} + \{x: \varphi_2(x) \leq r(1-t)\}
    \]
    and
    \[
        C_t^-(r) = \{x: \varphi_1(x) \leq rt\} + \{x: \bar \varphi_2(x) \leq r(1-t)\}.
    \]
    Equivalently,
    $$
    C_t^{-}(r)=\left\{x: \varphi_1(x) \leq r t\right\}-\left\{x: \varphi_2(x) \leq r(1-t)\right\} .
    $$
    Set
    \[
        S_r^+ = \bigcup_{0 \leq t \leq 1} C_t^+(r) \quad \text{and} \quad S_r^- = \bigcup_{0 \leq t \leq 1} C_t^-(r).
    \]
    Recall that
    $$
    (\varphi \mathbin\square \psi)(x):=\inf _{a+b=x}\{\varphi(a)+\psi(b)\}.
    $$
    We claim that
    \[
        S_r^+  =\left\{x:\left(\varphi_1 \mathbin\square \varphi_2\right)(x) \leq r\right\}.
    \]
    Indeed, if $x \in C_t^{+}(r)$, then $x=a+b$ for some $a, b$ satisfying
$$
\varphi_1(a) \leq r t, \quad \varphi_2(b) \leq r(1-t) .
$$
Therefore,
$$
\left(\varphi_1 \mathbin\square \varphi_2\right)(x) \leq \varphi_1(a)+\varphi_2(b) \leq r .
$$
This proves one inclusion. Conversely, suppose that
$$
\left(\varphi_1 \mathbin\square \varphi_2\right)(x) \leq r .
$$
The infimum is attained because
$$
a \longmapsto \varphi_1(a)+\varphi_2(x-a)
$$
is lower semicontinuous and tends to $+\infty$ as $|a| \to \infty$. Hence, there are $a, b \in \mathbb{R}^n$ such that
$$
x=a+b, \quad \varphi_1(a)+\varphi_2(b) \leq r .
$$
Set $ t = \frac{\varphi_1(a)}{r}$. Since both base functions are nonnegative, $t \in[0,1]$, and
$$
\varphi_1(a)=r t, \quad \varphi_2(b) \leq r-\varphi_1(a)=r(1-t) .
$$
Thus, $x \in C_t^{+}(r) \subset S_r^+$. Similarly, we have
    \[
        S_r^- = \left\{x:\left(\varphi_1 \mathbin\square \bar \varphi_2\right)(x) \leq r\right\}.
    \]
    Applying Lemma \ref{lem:sublevel-set-representation} to the function $t \mapsto (1-\alpha t)^{1 / \alpha} $, we obtain
    $$
\left\|f \star_\alpha g\right\|_1=\int_0^{\infty}\left|\left\{\varphi_1 \mathbin\square \varphi_2 \leq r\right\}\right|(1-\alpha r)^{1 / \alpha-1} \;\mathrm{d} r=\int_0^{\infty}\left|S_r^+\right|(1-\alpha r)^{1 / \alpha-1} \;\mathrm{d} r
$$
and
$$
\left\|f \star_\alpha \bar{g}\right\|_1=\int_0^{\infty}\left|\left\{\varphi_1 \mathbin\square \bar{\varphi}_2 \leq r\right\}\right|(1-\alpha r)^{1 / \alpha-1} \;\mathrm{d} r=\int_0^{\infty}\left|S_r^{-}\right|(1-\alpha r)^{1 / \alpha-1} \;\mathrm{d} r.
$$
For $\alpha=0$, the same argument holds with $e^{-r}$ in place of $(1-\alpha r)^{1 / \alpha-1}$.
It is enough to prove that
\begin{equation}
    |S_r^+| \leq  \frac{n+1}{2^n}\binom{2 n}{n} |S_r^-|.
\end{equation}
    For each $r>0$, applying Lemma \ref{lem:KM-lp} to $\varphi_1$ and $\varphi_2$ gives

$$
\left|S_r^{+}\right| \leq(n+1) \int_0^1\left|C_t^{+}(r)\right| \mathrm{d} t .
$$
For every $t \in[0,1]$, Litvak's observation yields
$$
\left|C_t^{+}(r)\right| \leq \frac{1}{2^n}\binom{2 n}{n}\left|C_t^{-}(r)\right| .
$$
Applying the right-hand inequality in Lemma \ref{lem:KM-lp} to $\varphi_1$ and $\bar{\varphi}_2$, whose associated family is precisely $C_t^{-}(r)$ , gives
$$
\int_0^1\left|C_t^{-}(r)\right| \mathrm{d} t \leq\left|S_r^{-}\right| .
$$
Consequently,
\[
\left|S_r^{+}\right| \leq(n+1) \int_0^1\left|C_t^{+}(r)\right| \mathrm{d} t \leq \frac{n+1}{2^n}\binom{2 n}{n} \int_0^1\left|C_t^{-}(r)\right| \mathrm{d} t
\leq \frac{n+1}{2^n}\binom{2 n}{n}\left|S_r^{-}\right| . \qedhere
\]
\end{proof}
\begin{remark} \label{remark7}
The factor in \eqref{eq:Litvak-alpha} is not claimed to be sharp. However, it is asymptotically sharp since it converges to $2^n$ as $ \alpha \to 0^-$.
Lemma~\ref{lem: counter-example} shows that every valid constant is at
least \(2^n\).  Indeed, take
\[
 f(x)=(1-\alpha|x|_1)^{1/\alpha}1_{\mathbb R_+^n}(x),
 \qquad g=\bar f,
\]
with the usual exponential interpretation at \(\alpha=0\).  For this
example,
\(\|f\star_\alpha g\|_1=2^n\|f\star_\alpha\bar g\|_1\).
\end{remark}

\begin{prop} \label{eq:improved-diff-alpha}
    Let  $n\geq 2$, and let $\alpha \in (-1/n,0)$. Then,
    \[
        \mathcal{C}(n,\alpha) \leq \min \left\{\frac{n+1}{2^n}\binom{2 n}{n}, \frac{2^n}{\prod_{j=1}^n(1+j \alpha)} \right\}.
    \]
\end{prop}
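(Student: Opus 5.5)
The plan is to write Colesanti's difference function $\Delta_\alpha f$ as an $\alpha$-sum $F\star_\alpha \bar F$ for an auxiliary function $F\in C_\alpha(\R^n)$ satisfying $F\star_\alpha F=f$. Then Theorem~\ref{prop:litvak-fun}, applied with $f$ replaced by $F$ and $g=\bar F$, gives the claim at once. The key observation is that the arithmetic mean in $M_\alpha$ becomes an infimal convolution of rescaled convex bases.

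First fix an integrable $f\in C_\alpha(\R^n)$ with full-dimensional support, which suffices by the reduction explained after \eqref{const:alpha-sum-Colestanti}. Let $\varphi=\operatorname{base}_\alpha f$, so that $f^\alpha=1-\alpha\varphi$. For $y+z=2x$ with $f(y),\bar f(z)>0$,
\[
M_\alpha(f(y),\bar f(z))^\alpha=\frac{f(y)^\alpha+\bar f(z)^\alpha}{2}=1-\alpha\,\frac{\varphi(y)+\bar\varphi(z)}{2}.
\]
Since $t\mapsto(1-\alpha t)^{1/\alpha}$ is decreasing, taking the supremum gives
\[
\operatorname{base}_\alpha(\Delta_\alpha f)(x)=\inf_{y+z=2x}\frac{\varphi(y)+\bar\varphi(z)}{2}=\inf_{u+v=x}\bigl(\psi(u)+\bar\psi(v)\bigr),
\]
where we substitute $y=2u$, $z=2v$ and set $\psi(u):=\tfrac12\varphi(2u)$. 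Define $F:=(1-\alpha\psi)^{1/\alpha}$. By \eqref{def:alpha-sum-original}, $\Delta_\alpha f=F\star_\alpha\bar F$, because $\operatorname{base}_\alpha\bar F=\bar\psi$.

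Next check that $F\in C_\alpha(\R^n)$ and compute $F\star_\alpha F$. The function $\psi$ is convex, lower semicontinuous, satisfies $\psi(0)=0=\min\psi$, and tends to $\infty$ at infinity. Hence $F$ is upper semicontinuous and $\alpha$-concave, with $F(0)=\|F\|_\infty=1$ and $F\to0$ at infinity. We also have
\[
\operatorname{base}_\alpha(F\star_\alpha F)(x)=\inf_{u+v=x}\frac{\varphi(2u)+\varphi(2v)}{2}=\varphi(x).
\]
Indeed, convexity gives $\tfrac12(\varphi(2u)+\varphi(2v))\ge\varphi(u+v)$, and the choice $u=v=x/2$ attains this bound. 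Therefore $F\star_\alpha F=f$.

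Finally, apply Theorem~\ref{prop:litvak-fun} with $f$ replaced by $F$ and $g=\bar F\in C_\alpha(\R^n)$, using $\bar{\bar F}=F$:
\[
\|\Delta_\alpha f\|_1=\|F\star_\alpha\bar F\|_1\le\min\Bigl\{\tfrac{n+1}{2^n}\tbinom{2n}{n},\tfrac{2^n}{\prod_{j=1}^n(1+j\alpha)}\Bigr\}\|F\star_\alpha F\|_1,
\]
and the right-hand norm is exactly the constant times $\|f\|_1$. This gives the stated bound on $\mathcal C(n,\alpha)$. I expect the main obstacle to be the base-function identity for $\Delta_\alpha f$: one must carefully handle points where $f(y)=0$, where the base is $+\infty$ and the convention in $M_\alpha$ gives $0$, and pass the supremum correctly through the decreasing map $t\mapsto(1-\alpha t)^{1/\alpha}$. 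The remaining steps are routine.
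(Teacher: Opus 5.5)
Your proposal is correct and is essentially the paper's proof. Your $F$ and $\bar F$, with bases $\tfrac12\varphi(2x)$ and $\tfrac12\varphi(-2x)$, are exactly the paper's auxiliary functions $g$ and $h$; this gives $\Delta_\alpha f=F\star_\alpha\bar F$ and $F\star_\alpha F=f$, after which Theorem~\ref{prop:litvak-fun} finishes the argument.
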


\begin{proof}
    Let $f \in C_\alpha (\R^n)$, and let $\varphi=\operatorname{base}_\alpha f$. Define $g, h \in C_\alpha\left(\mathbb{R}^n\right)$
$$
\operatorname{base}_\alpha g(x)=\frac{\varphi(2 x)}{2}, \quad \operatorname{base}_\alpha h(x)=\frac{\varphi(-2 x)}{2} .
$$
The convexity of $\varphi$ gives
$$
\operatorname{base}_\alpha\left(g \star_\alpha \bar{h}\right)(x)=\inf _{a+b=x} \frac{\varphi(2 a)+\varphi(2 b)}{2}=\varphi(x) .
$$
Thus $g \star_\alpha \bar{h}=f$. On the other hand, the definition of $\Delta_\alpha f$ gives
$$
\operatorname{base}_\alpha\left(\Delta_\alpha f\right)(x)=\inf _{a+b=x}\left\{\frac{\varphi(2 a)}{2}+\frac{\varphi(-2 b)}{2}\right\},
$$
and hence $\Delta_\alpha f=g \star_\alpha h$. Theorem \ref{prop:litvak-fun} therefore yields
$$
\frac{\left\|\Delta_\alpha f\right\|_1}{\|f\|_1 } \leq \min \left\{\frac{n+1}{2^n}\binom{2 n}{n}, \frac{2^n}{\prod_{j=1}^n(1+j \alpha)} \right\}.
$$
Since $f$ was arbitrary, taking the supremum over all $f \in C_\alpha (\R^n)$ proves the asserted estimate for $\mathcal{C}(n, \alpha)$.
\end{proof}
\begin{remark}
    The first estimate is independent of $\alpha$, whereas the second tends to $2^n$ as $\alpha \rightarrow 0^{-}$. Lemma~\ref{lem: counter-example} shows that the constant is at least $2^n$. Lemma~\ref{lem:computation} shows that the new upper bound is strictly smaller than the previously known one.
\end{remark}
\begin{remark}
In fact, the smallest possible constant of \eqref{eq:Litvak-alpha} is $\mathcal{C}(n,\alpha)$. For completeness, we justify the last assertion.
    Applying the Borell--Brascamp--Lieb inequality to $f \star_\alpha g$, and its reflection gives
    $$
    \left\|f \star_\alpha g\right\|_1 \leq\left\|\Delta_\alpha\left(f \star_\alpha g\right)\right\|_1 .
    $$
    Expanding the convex bases and using the commutativity and associativity of infimal convolution yields
    $$
    \Delta_\alpha\left(f \star_\alpha g\right)=\Delta_\alpha\left(f \star_\alpha \bar{g}\right) .
    $$
    Consequently,
    $$
    \left\|f \star_\alpha g\right\|_1 \leq \mathcal{C}(n, \alpha)\left\|f \star_\alpha \bar{g}\right\|_1 .
    $$
Thus, the optimal constant in \eqref{eq:Litvak-alpha} is at most $\mathcal{C}(n, \alpha)$. Conversely, the construction in the proof of Proposition \ref{eq:improved-diff-alpha} shows that every admissible constant in \eqref{eq:Litvak-alpha} is at least $\mathcal{C}(n, \alpha)$. Hence, the two optimal constants coincide. Therefore, \eqref{eq:Litvak-alpha} when $n=1$ and $ \alpha \in (-1,0]$ is sharp with constant 2.
\end{remark}
One can also obtain, using Theorem \ref{prop:litvak-fun} and Theorem \ref{thm: Plu-Ruz-alpha}, the following inequality in the flavor of Ruzsa's triangle inequality analogue to \cite[Theorem 5.2]{FMZ-24}:
\begin{corollary}
    Let $\alpha \in (-1/n,0]$. Then, for any $f,g,h \in C_\alpha (\R^n)$, \
    \begin{align}
        \| f\|_1 \| f \star_\alpha g \star_\alpha h \|_1
        \leq
        \frac{(n+1)c^n\binom{2n}{n}}{2^n\prod_{j=1}^n(1+j \alpha)^2}  \min \left\{
        \begin{array}{c}
             \| f \star_\alpha {g}  \|_1 \| f \star_\alpha h \|_1
             \\
             \| f \star_\alpha \Bar{g}  \|_1 \| f \star_\alpha h \|_1
             \\
             \| f \star_\alpha {g}  \|_1 \| f \star_\alpha \Bar{h} \|_1
             \\
             \| f \star_\alpha \Bar{g}  \|_1 \| f \star_\alpha \Bar{h} \|_1
        \end{array}
        \right\},
    \end{align}
    where $c$ is an absolute constant such that $c<26$.
\end{corollary}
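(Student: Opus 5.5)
The plan is to combine Theorem \ref{thm: Plu-Ruz-alpha} with Theorem \ref{prop:litvak-fun}, applying the latter to each factor on the right-hand side as needed. The first entry of the minimum is covered directly by Theorem \ref{thm: Plu-Ruz-alpha}:
\[
\|f\|_1\|f\star_\alpha g\star_\alpha h\|_1 \le \frac{c^n}{\prod_{j=1}^n(1+j\alpha)^2}\|f\star_\alpha g\|_1\|f\star_\alpha h\|_1 .
\]
The stated constant exceeds this one, since $\frac{n+1}{2^n}\binom{2n}{n}\ge 1$.

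For the remaining three entries I would replace $g$ by $\bar g$ or $h$ by $\bar h$ in one factor, using Theorem \ref{prop:litvak-fun}. That theorem gives
\[
\|f\star_\alpha g\|_1\le \frac{n+1}{2^n}\binom{2n}{n}\|f\star_\alpha\bar g\|_1,
\]
and the same bound with $h$ in place of $g$. Substituting one such bound costs one factor of $\frac{n+1}{2^n}\binom{2n}{n}$, which is exactly the extra factor in the stated constant. This handles the second and third entries.

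The fourth entry, $\|f\star_\alpha\bar g\|_1\|f\star_\alpha\bar h\|_1$, is the main obstacle. Converting both factors would cost the square of $\frac{n+1}{2^n}\binom{2n}{n}$, which the stated constant does not allow. I would avoid this with a symmetry trick. Apply Theorem \ref{thm: Plu-Ruz-alpha} to the triple $(f,\bar g,h)$:
\[
\|f\|_1\|f\star_\alpha\bar g\star_\alpha h\|_1\le K\|f\star_\alpha\bar g\|_1\|f\star_\alpha h\|_1, \qquad K=\frac{c^n}{\prod_{j=1}^n(1+j\alpha)^2}.
\]
This controls the wrong triple sum, so it only helps if the left side can be related to $\|f\star_\alpha g\star_\alpha h\|_1$. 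Theorem \ref{prop:litvak-fun}, applied to the pair $(f\star_\alpha h, g)$, gives
\[
\|f\star_\alpha h\star_\alpha g\|_1\le \frac{n+1}{2^n}\binom{2n}{n}\|f\star_\alpha h\star_\alpha\bar g\|_1 .
\]
Here I use that $f\star_\alpha h\in C_\alpha(\R^n)$ (closure of the class) and the associativity and commutativity of $\star_\alpha$, which follow from those of infimal convolution of the bases. One Litvak step thus trades $g$ for $\bar g$ inside the triple sum, and the Plünnecke--Ruzsa bound for $(f,\bar g,h)$ then yields the second entry again.

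For the fourth entry I would therefore proceed as follows. First apply the Litvak step to the pair $(f\star_\alpha\bar h, g)$, which changes only $g$ to $\bar g$. Next, the functions $f\star_\alpha g\star_\alpha h$ and $f\star_\alpha\bar g\star_\alpha\bar h$ have the same integral, via the map $x\mapsto -x$ combined with replacing $f$ by $\bar f$. That replacement does not preserve $\|f\star_\alpha\bar g\|_1$, so this is the delicate point. If it fails, I would fall back on treating the pair $(f, g\star_\alpha h)$ directly: apply Theorem \ref{prop:litvak-fun} once to $f$ against $g\star_\alpha h$, obtaining $\|f\star_\alpha\overline{g\star_\alpha h}\|_1 = \|f\star_\alpha\bar g\star_\alpha\bar h\|_1$. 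Then apply Theorem \ref{thm: Plu-Ruz-alpha} to the triple $(f,\bar g,\bar h)$. This uses exactly one Litvak factor and one Plünnecke--Ruzsa factor, which matches the stated constant. I expect this last route to be the correct one.
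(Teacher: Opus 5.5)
Your final route for the fourth entry is exactly the paper's proof: apply Theorem \ref{prop:litvak-fun} to $f$ and $g\star_\alpha h$, using $\overline{g\star_\alpha h}=\bar g\star_\alpha\bar h$, then Theorem \ref{thm: Plu-Ruz-alpha} to $(f,\bar g,\bar h)$; together with your treatment of the first three entries, this is correct. The intermediate detours should simply be deleted: reflecting $f$ is unnecessary, and the Litvak step for $(f\star_\alpha\bar h,g)$ does not even produce $f\star_\alpha g\star_\alpha h$ on the left.
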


\begin{proof}
The first estimate follows directly from Theorem \ref{thm: Plu-Ruz-alpha}, which gives
$$
\|f\|_1\left\|f \star_\alpha g \star_\alpha h\right\|_1 \leq \frac{c^n}{\prod_{j=1}^n(1+j \alpha)^2}\left\|f \star_\alpha g\right\|_1\left\|f \star_\alpha h\right\|_1 .
$$
Applying Theorem \ref{prop:litvak-fun}  to the second factor on the right gives
$$
\|f\|_1\left\|f \star_\alpha g \star_\alpha h\right\|_1 \leq \frac{(n+1) c^n}{2^n \prod_{j=1}^n(1+j \alpha)^2}\binom{2 n}{n}\left\|f \star_\alpha \bar{g}\right\|_1\left\|f \star_\alpha h\right\|_1 .
$$
Applying Theorem \ref{prop:litvak-fun} to the third factor instead gives the third estimate.
For the remaining estimate, we first apply Theorem \ref{prop:litvak-fun} to $f$ and $g \star_\alpha h$,
$$
\left\|f \star_\alpha g \star_\alpha h\right\|_1 \leq \frac{n+1}{2^n}\binom{2 n}{n}\left\|f \star_\alpha \bar{g} \star_\alpha \bar{h}\right\|_1 .
$$
Theorem \ref{thm: Plu-Ruz-alpha} applied to $f, \bar{g}, \bar{h}$ then yields
$$
\|f\|_1\|f \star_\alpha \bar{g} \star_\alpha \bar{h}\|_1 \leq \frac{c^n}{\prod_{j=1}^n(1+j \alpha)^2}\|f \star_\alpha \bar{g}\|_1\|f \star_\alpha \bar{h}\|_1 .
$$
Taking the minimum of these four estimates proves the corollary.
\end{proof}

\section{Functional Ruzsa triangle inequality}
\label{sec:triangle}

The following lemma was proved in \cite{FLM-20}. We restate it in the form needed here.
\begin{lemma} \label{FLM-20-formulate}
    Let $\alpha \in (-\frac{1}{n},0]$, and let $f \in C_\alpha (\R^n)$. Then,
    \[
        \|f\|_1 \leq \left(\prod_{j=1}^n \frac{2+j \alpha}{1+j \alpha} \right)\|f\|_2^2.
    \]
\end{lemma}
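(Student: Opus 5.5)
We write $f=(1-\alpha\varphi)^{1/\alpha}$ with $\varphi=\operatorname{base}_\alpha f$ (for $\alpha=0$, $f=e^{-\varphi}$), so that $\varphi$ is convex, lower semicontinuous, $\varphi(0)=0$ and $\varphi\to\infty$ at infinity. Then $f^2=(1-\alpha\varphi)^{2/\alpha}=(1-\beta\psi)^{1/\beta}$ with $\beta=\alpha/2$ and $\psi=2\varphi$; for $\alpha=0$ simply $f^2=e^{-2\varphi}$. Both integrals are then expressed through the single profile
\[
V(t)=\frac{|\{\varphi\le t\}|}{t^n},
\]
which is non-increasing by Lemma~\ref{lem:monotonicity-V}.

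Applying Lemma~\ref{lem:sublevel-set-representation} with $u\mapsto(1-\alpha u)^{1/\alpha}$ and with $u\mapsto(1-\alpha u)^{2/\alpha}$ gives
\[
\|f\|_1=\int_0^\infty V(t)\,t^n(1-\alpha t)^{1/\alpha-1}\,\mathrm dt,\qquad
\|f\|_2^2=2\int_0^\infty V(t)\,t^n(1-\alpha t)^{2/\alpha-1}\,\mathrm dt .
\]
The Beta-integral computation in the proof of Theorem~\ref{thm:monotonicity-alpha-base} gives the total masses of these two densities. The first is $n!/\prod_{j}(1+j\alpha)$. For the second, the substitution $s=2t$ reduces it to the same computation with parameter $\alpha/2$, which gives $n!/\prod_j(2+j\alpha)$. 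Thus the normalizing constants exactly produce the factor $\prod_j\frac{2+j\alpha}{1+j\alpha}$, and the claim becomes
\[
\int V\,\mathrm d\mu_1\le\int V\,\mathrm d\mu_2 ,
\]
where $\mu_1,\mu_2$ are the probability measures with densities proportional to $t^n(1-\alpha t)^{1/\alpha-1}$ and $t^n(1-\alpha t)^{2/\alpha-1}$, respectively.

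By Lemmas~\ref{lem:measures-comparison} and~\ref{lem:measures-comparison-non-increasing}, it suffices to show that the density ratio $R=p_1/p_2$ crosses $1$ exactly once, from below to above. Up to a constant, $R(t)=(1-\alpha t)^{-1/\alpha}$, and
\[
(\log R)'=\frac{1}{1-\alpha t}>0,
\]
so $R$ is increasing. For $\alpha=0$ one has $R\propto e^{t}$, which is also increasing. Since both measures have mass $1$, $R-1$ must change sign, and monotonicity gives exactly one crossing from below to above. This yields $\mu_2\preceq_{st}\mu_1$, and since $V$ is non-increasing the desired inequality follows. Equality holds when $V$ is constant, for example $\varphi=\|\cdot\|_K$.

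The only real care is in bookkeeping the normalizations, in particular the factor $2$ coming from the derivative of $u\mapsto(1-\alpha u)^{2/\alpha}$ together with the rescaled Beta integral. I expect these to reduce cleanly to $\prod_j(2+j\alpha)$, but I would verify this step explicitly. Integrability requires $\alpha>-1/n$ for the first measure; for the second it needs only $\alpha>-2/n$, which is weaker.
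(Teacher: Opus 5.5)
Your proof is correct, and it takes a different route from the paper. The paper does not argue directly. It normalizes $\rho=f/\|f\|_1$ and rewrites the claim as $h_2(\rho)-h_\infty(\rho)\le h_2(\rho_\alpha)-h_\infty(\rho_\alpha)$, with the extremal density $\rho_\alpha\propto(1-\alpha|x|_1)^{1/\alpha}1_{\mathbb{R}^n_+}$. It then imports this R\'enyi-entropy comparison from \cite[Corollary 7.1]{FLM-20}, so only the computation of $\|\rho_\alpha\|_2^2$ and $\|\rho_\alpha\|_\infty$ remains.

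You instead reprove this special case (orders $2$ and $\infty$) from scratch, using the single-crossing machinery of Section~\ref{our-main-results}. Both $\|f\|_1$ and $\|f\|_2^2$ are written against the one non-increasing profile $V$. The density ratio $(1-\alpha t)^{-1/\alpha}$ (or $e^t$ at $\alpha=0$) is monotone, which is even simpler than the U-shaped ratio in Theorem~\ref{thm:monotonicity-alpha-base}.

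Your bookkeeping checks out. One has $\int_0^\infty t^n(1-\alpha t)^{2/\alpha-1}\,\mathrm dt=n!/\bigl(2\prod_j(2+j\alpha)\bigr)$, and the factor $2$ from $-F'$ restores $n!/\prod_j(2+j\alpha)$. So the claim is exactly $\int V\,\mathrm d\mu_1\le\int V\,\mathrm d\mu_2$, as you say.

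Your route buys self-containedness: it is in effect the moment-comparison method underlying \cite{FLM-20}, specialized to this case. It also gives a transparent equality case, namely $V$ constant, e.g.\ $\varphi=\|\cdot\|_K$ or the paper's $\rho_\alpha$. The paper's route buys brevity, and places the statement within a general family of sharp R\'enyi-entropy comparisons between arbitrary orders.
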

\begin{proof}
    Let $\rho = \frac{f}{\|f\|_1}$ be an $\alpha$-concave probability density. Recall its R\'enyi entropies of orders 2 and $\infty$,
$$
h_2(\rho)=-\log \int_{\mathbb{R}^n} \rho(x)^2 \; \mathrm d x,
\quad
h_{\infty}(\rho)=-\log \|\rho\|_{\infty} .
$$
Thus, since $\|f\|_\infty = 1$,
\begin{equation}\label{eq:FLM-01}
    h_2(\rho) -h_\infty(\rho)=\left(2 \log \|f\|_1 -\log \|f\|_2^2\right) -\left(\log \|f\|_1-\log \|f\|_{\infty} \right) = \log \left(\frac{\|f\|_1}{\left\|f\right\|_2^2}\right).
\end{equation}
Let
\[
\rho_\alpha (x)= \left(\prod_{j=1}^n(1+j \alpha)\right)\left(1-\alpha |x|_1\right)^{1 / \alpha} {1}_{\mathbb{R}_{+}^n}(x), \quad \rho_0 (x)= e^{-|x|_1} {1}_{\mathbb{R}_{+}^n}(x).
\]
Thus,
\[
    \|\rho_\alpha\|_2^2 = \frac{\left(\prod_{j=1}^n(1+j \alpha)\right)^2}{\prod_{j=1}^n(2+j \alpha)}, \quad \| \rho_\alpha \|_\infty = \prod_{j=1}^n(1+j \alpha).
\]
Therefore,
\begin{equation} \label{eq:FLM-02}
    h_2(\rho_\alpha) -h_\infty(\rho_\alpha) = \log \left(\frac{\prod_{j=1}^n(2+j \alpha)}{\prod_{j=1}^n(1+j \alpha)} \right).
\end{equation}
Using \cite[Corollary 7.1]{FLM-20}, we obtain that
    \[
        h_2(\rho) -h_\infty(\rho) \leq h_2(\rho_\alpha) -h_\infty(\rho_\alpha).
    \]
    Substituting \eqref{eq:FLM-01} and \eqref{eq:FLM-02}, the proof is complete.
\end{proof}

\begin{theorem} \label{thm:ruza-functional}
    Let $\alpha \in (-\frac{1}{n},0]$, and let $f,g,h  \in C_\alpha (\R^n)$. Then,
    \[
        \|f\|_1 \|g \star_\alpha \Bar h\|_1 \leq \frac{1}{2^n} \prod_{j=1}^n\left(\frac{2+j \alpha}{1+j \alpha}\right)^2 \|f \star_\alpha \Bar g \|_1 \|f \star_\alpha \Bar h \|_1.
    \]
\end{theorem}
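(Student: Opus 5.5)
The plan follows the set-theoretic proof of Ruzsa's triangle inequality, in which each $x=b-c\in B-C$ has at least $|A|$ representations $x=(a-c)-(a-b)$ with $a\in A$. I first do the log-concave case $\alpha=0$, where the lifting is clean, and then adapt it to $\alpha<0$; the adaptation is where I expect the difficulty.

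\emph{Step 1 ($\alpha=0$).} Write $F=f\star_0\bar g$ and $G=f\star_0\bar h$. For $x\in\R^n$ and any $y,z$ with $y-z=x$, the definition of the Asplund product gives, for every $u$,
\[
F(u-y)\ge f(u)g(y),\qquad G(u-z)\ge f(u)h(z).
\]
Multiplying, integrating in $u$, and substituting $v=u-z$ gives
\[
\int F(v-x)G(v)\,\mathrm dv\ \ge\ g(y)h(z)\,\|f\|_2^2 .
\]
Taking the supremum over $y-z=x$ turns the right-hand side into $(g\star_0\bar h)(x)\,\|f\|_2^2$. Integrating in $x$ and using Fubini gives
\[
\|F\|_1\|G\|_1\ \ge\ \|f\|_2^2\,\|g\star_0\bar h\|_1 .
\]
Lemma~\ref{FLM-20-formulate} with $\alpha=0$ gives $\|f\|_1\le 2^n\|f\|_2^2$. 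This yields the constant $2^n=2^{-n}\cdot 4^n$, which is the claim at $\alpha=0$ (and Theorem~\ref{thm: Ruza-triangle-fun}).

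\emph{Step 2 ($\alpha<0$).} I would run the same argument with the pointwise bounds
\[
(f\star_\alpha\bar g)(u-y)\ge \bigl(f(u)^\alpha+g(y)^\alpha-1\bigr)^{1/\alpha},\qquad (f\star_\alpha\bar h)(u-z)\ge \bigl(f(u)^\alpha+h(z)^\alpha-1\bigr)^{1/\alpha}.
\]
The aim is a factorisation of the form
\[
\int (f\star_\alpha\bar g)(v-x)\,(f\star_\alpha\bar h)(v)\,\mathrm dv\ \ge\ (g\star_\alpha\bar h)(x)\cdot I_\alpha(f),
\]
for some functional $I_\alpha(f)$. One would then need
\[
\|f\|_1\le 2^{-n}\prod_{j=1}^n\Bigl(\frac{2+j\alpha}{1+j\alpha}\Bigr)^2 I_\alpha(f),
\]
to be proved by the moment-comparison method of Theorem~\ref{thm:monotonicity-alpha-base}. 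That is, write both $\|f\|_1$ and $I_\alpha(f)$ via Lemma~\ref{lem:sublevel-set-representation} in terms of $V(t)=|\{\operatorname{base}_\alpha f\le t\}|/t^n$. Then show the two normalized densities cross once, and conclude by Lemmas~\ref{lem:measures-comparison} and~\ref{lem:measures-comparison-non-increasing}. The extremal profile $(1-\alpha|x|_1)^{1/\alpha}1_{\R^n_+}$, which is the extremizer in Lemma~\ref{FLM-20-formulate}, should identify the constant.

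\emph{Main obstacle.} The difficulty is the two-variable pointwise factorisation in Step 2. A naive splitting fails. For instance, at $f(u)=1$ the two lower bounds multiply to $g(y)h(z)$, whereas by Proposition~\ref{prop:monotonicity} one has $(g\star_\alpha\bar h)(x)\ge g(y)h(z)$, so the $\alpha$-sum is the larger quantity. What I would try first is to work at the level of convex bases rather than function values. Put $\varphi=\operatorname{base}_\alpha f$ and take sublevel sets:
\[
\{\varphi\le s\}-\{\operatorname{base}_\alpha g\le t\}\subset\{\operatorname{base}_\alpha(f\star_\alpha\bar g)\le s+t\}.
\]
Applying the set Ruzsa inequality \eqref{eq:Ruzsa-ineq-wo-A} levelwise, one then integrates against the weight $(1-\alpha r)^{1/\alpha-1}$ and handles the resulting double integral with the $V$-monotonicity trick from the proof of Theorem~\ref{thm: Plu-Ruz-0}.

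\emph{Fallback.} If this fails, there is a weaker bound that certainly works. Combine Corollary~\ref{cor:01} for $g\star_\alpha\bar h$, the $\alpha=0$ case applied to $B_\alpha f,B_\alpha g,B_\alpha h$, the identity~\eqref{eq:03} and $B_\alpha(\cdot)\le(\cdot)$, and Corollary~\ref{cor:01} once more for $f$. This gives the constant $2^n/\prod_j(1+j\alpha)^2$. It is off from the claimed constant exactly by the factor $\prod_j\bigl(\tfrac{2+j\alpha}{2}\bigr)^2$. Recovering that factor is the real content of the sharper moment comparison in Step 2.
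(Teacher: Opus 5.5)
Your Step~1 is correct and coincides with the paper's argument at $\alpha=0$. For $\alpha<0$, however, there is a genuine gap: the proposal does not reach the stated constant. The factorisation with $(g\star_\alpha\bar h)(x)$ on the right is left open, with $I_\alpha(f)$ unspecified, and the fallback only gives $2^n/\prod_j(1+j\alpha)^2$. The levelwise alternative does not obviously help either. The set $\{\operatorname{base}_\alpha(g\star_\alpha\bar h)\le r\}$ is a union over $s+t=r$ of difference sets, not a single difference set. Reducing to one set Ruzsa inequality therefore loses a factor like the $n+1$ of Lemma~\ref{lem:KM-lp}, as in Theorem~\ref{prop:litvak-fun}. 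This comes on top of the non-sharp losses of the double-integral trick from Theorem~\ref{thm: Plu-Ruz-0}.

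The missing idea is to stop aiming for $(g\star_\alpha\bar h)(x)$. Aim instead for the function with the same convex base $\psi=\operatorname{base}_\alpha(g\star_\alpha\bar h)$ but exponent $\alpha/2$. Two elementary facts do the work: $1-\alpha(u+s)\le(1-\alpha u)(1-\alpha s)$, and the AM--GM bound $(1-\alpha s)(1-\alpha t)\le\bigl(1-\tfrac{\alpha}{2}(s+t)\bigr)^2$. Raising to the negative power $1/\alpha$ gives, for $u,s,t\ge 0$,
\[
(1-\alpha(u+s))^{1/\alpha}(1-\alpha(u+t))^{1/\alpha}\ \ge\ (1-\alpha u)^{2/\alpha}\bigl(1-\tfrac{\alpha}{2}(s+t)\bigr)^{2/\alpha}.
\]
Take $u=\operatorname{base}_\alpha f(a)$, $s=\operatorname{base}_\alpha g(b)$, $t=\operatorname{base}_\alpha h(c)$. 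Integrating in $a$ and taking the supremum over $b-c=x$ gives exactly your Step~1 structure,
\[
\int (f\star_\alpha\bar g)(v-x)\,(f\star_\alpha\bar h)(v)\,\mathrm dv\ \ge\ \|f\|_2^2\,\bigl(1-\tfrac{\alpha}{2}\psi(x)\bigr)^{2/\alpha}.
\]
So $I_\alpha(f)=\|f\|_2^2$, which Lemma~\ref{FLM-20-formulate} controls with the factor $\prod_j\frac{2+j\alpha}{1+j\alpha}$.

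The moment comparison should then be applied to $g\star_\alpha\bar h$, not to $f$. Theorem~\ref{thm:monotonicity-alpha-base} with $\alpha_1=\alpha<\alpha_2=\alpha/2$ gives
\[
\int\bigl(1-\tfrac{\alpha}{2}\psi\bigr)^{2/\alpha}\ \ge\ 2^n\prod_j\frac{1+j\alpha}{2+j\alpha}\,\|g\star_\alpha\bar h\|_1 .
\]
Integrating the previous display in $x$ and combining the two comparisons yields precisely the stated constant. Each comparison improves the corresponding step of your fallback by exactly $\prod_j\frac{2+j\alpha}{2}$. This accounts for the factor $\prod_j\bigl(\frac{2+j\alpha}{2}\bigr)^2$ you identified as missing.
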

\begin{proof}
Throughout the proof, expressions of the form
$$
(1-\alpha t)^{1 / \alpha} \quad \text { and } \quad \left(1-\frac{\alpha}{2} t\right)^{2 / \alpha}
$$
are understood as $e^{-t}$ when $\alpha=0$.
We claim that
\begin{equation} \label{eq:claim-Ruzsa-1}
    (1-\alpha(u+s))^{1 / \alpha}(1-\alpha(u+t))^{1 / \alpha} \geq(1-\alpha u)^{2 / \alpha}\left(1-\frac{\alpha}{2}(s+t)\right)^{2 / \alpha}.
\end{equation}
for every $u, s, t \geq 0$. For $\alpha=0$, both sides are the same. Suppose that $\alpha<0$. Since
$$
1-\alpha(u+s) \leq(1-\alpha u)(1-\alpha s)
$$
and $1 / \alpha<0$, it follows that
$$
(1-\alpha(u+s))^{1 / \alpha} \geq(1-\alpha u)^{1 / \alpha}(1-\alpha s)^{1 / \alpha} .
$$
Similarly,
$
(1-\alpha(u+t))^{1 / \alpha} \geq(1-\alpha u)^{1 / \alpha}(1-\alpha t)^{1 / \alpha} .
$
Multiplying them together and using the arithmetic-geometric mean inequality give
\begin{align}
(1-\alpha(u+s))^{1 / \alpha}(1-\alpha(u+t))^{1 / \alpha}
&\geq(1-\alpha u)^{2 / \alpha}((1-\alpha s)(1-\alpha t))^{1 / \alpha}
\\
&\geq (1-\alpha u)^{2 / \alpha} \left(1-\frac{\alpha}{2}(s+t)\right)^{2 / \alpha} ,
\end{align}
verifying the claim \eqref{eq:claim-Ruzsa-1}.

Let $x \in \R^n$. We fix $a, b, c \in \mathbb{R}^n$. By the definition of \eqref{def:alpha-sum-original}, we have
$$
\left(f \star_\alpha \bar{g}\right)(a-b) \geq\left(1-\alpha\left( \operatorname{ base }_\alpha f(a)+ \operatorname{ base }_\alpha g(b)\right)\right)^{1 / \alpha},
$$
and
$$
    (f \star_\alpha \bar{h})(a-c) \geq\left(1-\alpha\left(\operatorname{ base }_\alpha f(a)+\operatorname{ base }_\alpha h(c)\right)\right)^{1 / \alpha}.
$$
Applying the claim \eqref{eq:claim-Ruzsa-1} with
$
u=\operatorname { base }_\alpha f(a),  s=\operatorname { base }_\alpha g(b),  t=\operatorname { base }_\alpha h(c),
$ we obtain
\[
    \left(f \star_\alpha \bar{g}\right)(a-b)(f \star_\alpha \bar{h})(a-c)
    \geq f^2(a)\left(1-\frac{\alpha}{2}\left(\operatorname{base}_\alpha g(b)+\operatorname{base}_\alpha h(c)\right)\right)^{2 / \alpha} .
\]
Integrating both sides with respect to $a$, making a change of variables $u = a-b$, and taking the supremum over all $b,c$ such that $x = b-c$,
\begin{align}
    \int \left(f \star_\alpha \bar{g}\right) (u) (f \star_\alpha \bar{h})(u+x) \;\mathrm d u
    & =\int \left(f \star_\alpha \bar{g}\right) (a-b) (f \star_\alpha \bar{h})(a-c) \;\mathrm d a
    \\
    &\geq \|f\|_2^2\sup_{b-c = x}\left(1-\frac{\alpha}{2}\left(\operatorname{base}_\alpha g(b)+\operatorname{base}_\alpha h(c)\right)\right)^{2 / \alpha}
    \\
    &=
    \|f\|_2^2\left(1-\frac{\alpha}{2}\inf_{b-c = x} \left(\operatorname{base}_\alpha g(b)+\operatorname{base}_\alpha h(c)\right)\right)^{2 / \alpha}
    \\
    &=
    \|f\|_2^2\left(1-\frac{\alpha}{2}\left(\operatorname{base}_\alpha (g \star_\alpha \bar h )\right)(x)\right)^{2 / \alpha} .
\end{align}
Integrating in $x$ and applying Tonelli's theorem gives
\begin{align}
    \left\|f \star_\alpha \bar{g}\right\|_1\|f \star_\alpha \bar{h}\|_1
    &= \int \int \left(f \star_\alpha \bar{g}\right) (u) (f \star_\alpha \bar{h})(u+x) \;\mathrm d x \;\mathrm d u
    \\
    &\geq \|f\|_2^2 \int \left(1-\frac{\alpha}{2} \operatorname{base}_\alpha\left(g \star_\alpha \bar{h}\right)(x)\right)^{2 / \alpha} \mathrm{d} x.
\end{align}
Using Lemma \ref{FLM-20-formulate} and Theorem \ref{thm:monotonicity-alpha-base} with $\alpha_1=\alpha$ and $\alpha_2=\alpha / 2$,
\begin{align}
    \|f\|_2^2 \int \left(1-\frac{\alpha}{2} \operatorname{base}_\alpha\left(g \star_\alpha \bar{h}\right)(x)\right)^{2 / \alpha} \mathrm{d} x
    \geq
    \left(\prod_{j=1}^n \frac{1+j \alpha}{2+j \alpha} \right) \|f\|_1 2^n \prod_{j=1}^n \frac{1+j \alpha}{2+j \alpha}\left\|g \star_\alpha \bar{h}\right\|_1.
\end{align}
Combining the last two estimates yields the desired inequality.
\end{proof}

\begin{remark}
For $\alpha\in(-\frac{1}{n},0]$, set $f(x)=\bigl(1-\alpha|x|_1\bigr)^{1/\alpha}\cdot {1}_{\mathbb{R}^n_+}(x),$
    and in the limit case $\alpha=0$, let
    \(
    f(x)=e^{-|x|_1} \cdot {1}_{\mathbb{R}^n_+}(x).
    \)
Consider  $f= \bar g = \bar h $. Then,
\[
\|f \|_1 \|g \star_\alpha \bar h\|_1 =\|f \|_1 \|\bar f \star_\alpha f \|_1 = 2^n \|f \star_\alpha f\|_1 \|f \star_\alpha f\|_1 = 2^n \|f \star_\alpha \bar g\|_1 \|f \star_\alpha \bar h\|_1,
\]
where the second equality uses Lemma~\ref{lem: counter-example}. Thus, any functional analogue of \eqref{eq:Ruzsa-ineq-wo-A} in the range \(\alpha\in(-1/n,0]\) must have multiplicative constant at least $2^n$.
\end{remark}

\begin{corollary} \label{cor:ruzvol1-func}
    Let $\alpha \in (-\frac{1}{n},0]$, and let $f,g,h  \in C_\alpha (\R^n)$. Then
    \[
        \|f\|_1 \|g \star_\alpha  h\|_1 \leq \frac{4^n}{\prod_{j=1}^n (1+j \alpha)^2} \|f \star_\alpha  g \|_1 \|f \star_\alpha  h \|_1.
    \]
\end{corollary}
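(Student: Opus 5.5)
The plan is to combine Theorem~\ref{thm:ruza-functional} with the second estimate of Theorem~\ref{prop:litvak-fun}, as the text preceding the corollary suggests. The trick is to feed Theorem~\ref{thm:ruza-functional} the reflected function $\bar h$ in place of $h$. Since $\bar{\bar h}=h$ and $\bar h\in C_\alpha(\R^n)$, this gives
\[
\|f\|_1\|g\star_\alpha h\|_1 \le \frac{1}{2^n}\prod_{j=1}^n\Bigl(\frac{2+j\alpha}{1+j\alpha}\Bigr)^2 \|f\star_\alpha \bar g\|_1\|f\star_\alpha h\|_1 .
\]
One factor on the right now involves the difference $f\star_\alpha\bar g$ instead of the sum $f\star_\alpha g$.

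Next we convert the difference back into a sum. Apply Theorem~\ref{prop:litvak-fun} to the pair $(f,\bar g)$, keeping only the second term of the minimum:
\[
\|f\star_\alpha \bar g\|_1\le \frac{2^n}{\prod_{j=1}^n(1+j\alpha)}\|f\star_\alpha g\|_1 .
\]
Substituting this produces the constant
\[
\frac{\prod_{j=1}^n(2+j\alpha)^2}{\prod_{j=1}^n(1+j\alpha)^3}.
\]

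The main obstacle is that this constant does not obviously match the claimed $4^n/\prod_{j}(1+j\alpha)^2$. The required comparison is $\prod_j(2+j\alpha)^2\le 4^n\prod_j(1+j\alpha)$, which fails as $\alpha\to0^-$ only if the factors are mishandled. At $\alpha=0$ it reads $4^n\le4^n$, which is fine. For $\alpha<0$ we have $(2+j\alpha)^2=4+4j\alpha+j^2\alpha^2>4(1+j\alpha)$, so the crude bound loses a little.

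So I would redo the last step of the proof of Theorem~\ref{thm:ruza-functional} more carefully. That proof yields
\[
\|f\star_\alpha\bar g\|_1\|f\star_\alpha\bar h\|_1\ge \|f\|_2^2\int\Bigl(1-\tfrac{\alpha}{2}\operatorname{base}_\alpha(g\star_\alpha\bar h)\Bigr)^{2/\alpha}.
\]
Instead of the R\'enyi-entropy bound of Lemma~\ref{FLM-20-formulate}, I would bound $\|f\|_2^2$ from below through $B_\alpha f$. Since $f\ge B_\alpha f$, we have $\|f\|_2^2\ge\|B_\alpha f\|_2^2$. The function $(B_\alpha f)^2$ is log-concave with sublevel profile $\varphi\le t/2$, so $\|(B_\alpha f)^2\|_1=2^{-n}\|B_\alpha f\|_1$, and Corollary~\ref{cor:01} then gives $\|f\|_2^2\ge 2^{-n}\prod_j(1+j\alpha)\|f\|_1$.

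The integral is handled the same way. Because $(1-\frac{\alpha}{2}\psi)^{2/\alpha}\ge e^{-\psi}$, it is at least $\|B_\alpha(g\star_\alpha\bar h)\|_1\ge\prod_j(1+j\alpha)\|g\star_\alpha\bar h\|_1$. This produces the variant
\[
\|f\|_1\|g\star_\alpha\bar h\|_1\le \frac{2^n}{\prod_j(1+j\alpha)^2}\|f\star_\alpha\bar g\|_1\|f\star_\alpha\bar h\|_1 .
\]

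Finally, apply this variant with $\bar h$ in place of $h$ and convert $f\star_\alpha\bar g$ to $f\star_\alpha g$. For that conversion I would use the $\alpha=0$ sharp bound $2^n$ at the level of $B_\alpha$ functions, via Proposition~\ref{prop:litvak-fun-0} together with \eqref{eq:03} and \eqref{eq:02}, rather than at the level of $f$, so that no further $\prod_j(1+j\alpha)$ factor is lost. Tracking exactly which quantities to pass through $B_\alpha$ so that only two such factors appear in total is the delicate bookkeeping step I expect to take the most care.
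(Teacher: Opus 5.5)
\textbf{There is a genuine gap in your last step.} You leave it open as ``delicate bookkeeping'', and as you have set things up it cannot be closed. After substituting $\bar h$ for $h$, your variant reads
\[
\|f\|_1\|g\star_\alpha h\|_1\le \frac{2^n}{\prod_{j=1}^n(1+j\alpha)^2}\,\|f\star_\alpha\bar g\|_1\,\|f\star_\alpha h\|_1 .
\]
This has already used both factors $\prod_j(1+j\alpha)$ that the target constant allows: one for $\|f\|_1$, one for the $g$--$h$ term. Yet its right-hand side is still at the level of $f\star_\alpha\bar g$.

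Finishing from here would require an $\alpha$-level bound $\|f\star_\alpha\bar g\|_1\le 2^n\|f\star_\alpha g\|_1$, which is not available for $n\ge 2$. Theorem~\ref{prop:litvak-fun} gives either $2^n/\prod_j(1+j\alpha)$, for a total of $4^n/\prod_j(1+j\alpha)^3$, or $\frac{n+1}{2^n}\binom{2n}{n}$, for a total of $(n+1)\binom{2n}{n}/\prod_j(1+j\alpha)^2$, which exceeds the target for $n\ge 2$.

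Your suggestion to use the sharp $\alpha=0$ bound at the $B_\alpha$ level does not plug in either. That bound controls $\|B_\alpha f\star_0\overline{B_\alpha g}\|_1=\|B_\alpha(f\star_\alpha\bar g)\|_1$. By \eqref{eq:02} this is \emph{smaller} than the quantity $\|f\star_\alpha\bar g\|_1$ in your variant. Going from the latter down to the former is Corollary~\ref{cor:01} once more, i.e.\ a third factor.

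\textbf{The missing idea} is to pass to $B_\alpha$ on the left-hand side, and then run the whole argument (Ruzsa step and sum-difference step) on the $B_\alpha$ functions, returning to $f,g,h$ only at the very end. Set $F=B_\alpha f$, $G=B_\alpha g$, $H=B_\alpha h\in C_0(\R^n)$. Then:
\begin{enumerate}
\item Corollary~\ref{cor:01} and \eqref{eq:03} give $\|f\|_1\|g\star_\alpha h\|_1\le\prod_j(1+j\alpha)^{-2}\|F\|_1\|G\star_0 H\|_1$.
\item Theorem~\ref{thm:ruza-functional} at $\alpha=0$, applied to $F,G,\bar H$, gives $\|F\|_1\|G\star_0 H\|_1\le 2^n\|F\star_0\bar G\|_1\|F\star_0 H\|_1$. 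This is your own pointwise computation with $\|F\|_2^2\ge 2^{-n}\|F\|_1$, run on $F,G,H$ instead of $f,g,h$.
\item Proposition~\ref{prop:litvak-fun-0} gives $\|F\star_0\bar G\|_1\le 2^n\|F\star_0 G\|_1$.
\item Finally, $F\star_0 G=B_\alpha(f\star_\alpha g)\le f\star_\alpha g$ and $F\star_0 H=B_\alpha(f\star_\alpha h)\le f\star_\alpha h$ return to the original functions at no cost.
\end{enumerate}
This is the paper's argument, and it yields exactly $4^n/\prod_j(1+j\alpha)^2$.

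\textbf{A smaller correction.} The identity $\|(B_\alpha f)^2\|_1=2^{-n}\|B_\alpha f\|_1$ is false in general; it holds only when $\operatorname{base}_\alpha f$ is $1$-homogeneous. Lemma~\ref{lem:monotonicity-V} gives the inequality $\|(B_\alpha f)^2\|_1\ge 2^{-n}\|B_\alpha f\|_1$, which is the direction you need.
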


\begin{proof}
    Using Corollary \ref{cor:01}, and the definition \eqref{def:alpha-sum-original} gives
$$
\begin{aligned}
\|f\|_1\left\|g \star_\alpha h\right\|_1 & \leq \frac{1}{\prod_{j=1}^n (1+j \alpha)^2}\left\|B_\alpha f\right\|_1\left\|B_\alpha\left(g \star_\alpha h\right)\right\|_1 \\
& =\frac{1}{\prod_{j=1}^n (1+j \alpha)^2}\left\|B_\alpha f\right\|_1\left\|B_\alpha g \star_0 B_\alpha h\right\|_1 .
\end{aligned}
$$
Apply Theorem \ref{thm:ruza-functional} with $\alpha=0$ to the three log-concave functions $B_\alpha f, B_\alpha g$, and $\overline{B_\alpha h}$. We obtain
$$
\left\|B_\alpha f\right\|_1\|B_\alpha g \star_0 B_\alpha h\|_1 \leq 2^n\|B_\alpha f \star_0 \overline{B_\alpha g}\|_1\left\|B_\alpha f \star_0 B_\alpha h\right\|_1 .
$$
Theorem \ref{prop:litvak-fun} with $\alpha=0$, the definition \eqref{def:alpha-sum-original} and \eqref{eq:02} yield
$$
\left\|B_\alpha f \star_0 \overline{B_\alpha g}\right\|_1 \leq 2^n\left\|B_\alpha f \star_0 B_\alpha g\right\|_1  = 2^n \left\|B_\alpha (f \star_\alpha g)\right\|_1 \leq 2^n\|f \star_\alpha g\|_1.
$$
Combining the preceding inequalities gives the desired inequality.
\end{proof}

\appendix

\section{}\label{appendixtech}
\begin{lemma} \label{upper-bound-c}
    Let $c_n$ be the smallest possible number satisfying that for any convex bodies $A,B,C$ in $\R^n$, one has
\begin{equation}
    |A| |A+B+C| \leq c_n |A+B| |A+C|.
\end{equation}
Then, $c_n (4^n +n) \binom{2n}{n}<26^n$.
\end{lemma}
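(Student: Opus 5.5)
The plan is to combine the upper bound $c_n\le\varphi^n$ from \eqref{Ineq_Plun-Ruz-sharper}, where $\varphi=\frac{1+\sqrt5}{2}$, with elementary estimates for the other two factors. This reduces the claim to the strict inequality
\[
\varphi^n(4^n+n)\binom{2n}{n}<26^n \qquad\text{for all } n\ge 1 .
\]

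First, we bound the binomial coefficient by the standard estimate $\binom{2n}{n}\le 4^n/\sqrt{\pi n}<4^n$. An even cruder bound suffices: $\binom{2n}{n}$ is one term of the expansion $\sum_k\binom{2n}{k}=4^n$, and the sum has other positive terms, so $\binom{2n}{n}<4^n$.

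Next, we bound $4^n+n$. Since $n<4^n$, we get $4^n+n\le 2\cdot 4^n$. This yields
\[
c_n(4^n+n)\binom{2n}{n}<\varphi^n\cdot 2\cdot 4^n\cdot 4^n=2\,(16\varphi)^n .
\]
Here $16\varphi\approx 25.89$, so the base is just below $26$. The factor $2$ is not absorbed by a fixed $n$-independent argument, so it must be handled with a bit more care. This is the one delicate point.

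Using $\binom{2n}{n}\le 4^n/\sqrt{\pi n}$, the bound becomes $\frac{2}{\sqrt{\pi n}}(16\varphi)^n$. For $n\ge 2$ the prefactor satisfies $\frac{2}{\sqrt{2\pi}}<1$, hence the bound is $<(16\varphi)^n<26^n$.

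Alternatively, we can use the exact ratio $\bigl(\tfrac{4^n+n}{4^n}\bigr)\frac{1}{\sqrt{\pi n}}\le 1$. For $n=1$ we check directly: $c_1=1$, since in dimension one the volume of a sum of intervals is the sum of their lengths, and $|A|(|A|+|B|+|C|)\le(|A|+|B|)(|A|+|C|)$. Hence $c_1\cdot 5\cdot 2=10<26$.

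For $n\ge2$ the chain above gives $c_n(4^n+n)\binom{2n}{n}\le \varphi^n\cdot\frac{4^n+n}{\sqrt{\pi n}}\cdot 4^n$. Since $(4^n+n)/\sqrt{\pi n}\le 4^n$ for $n\ge2$ (already $18/\sqrt{2\pi}<16$, and the ratio only improves as $n$ grows), we conclude $\le(16\varphi)^n<26^n$.

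The main obstacle is the margin $16\varphi<26$, which is tight to within a few percent. We must therefore avoid losing constant factors, and we need either the $\sqrt{\pi n}$ gain in the central binomial bound or the direct small-$n$ check.
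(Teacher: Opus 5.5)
Your argument is correct and has the same skeleton as the paper's proof. Both use the known bound $c_n\le\varphi^n$ and the estimate $4^n+n\le 2\cdot 4^n$, and both conclude with $16\varphi=8(1+\sqrt5)<26$. The only difference is how the factor $2$ from $4^n+n\le 2\cdot 4^n$ is absorbed.

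You quote the Wallis-type bound $\binom{2n}{n}\le 4^n/\sqrt{\pi n}$ rather than proving it. It does hold, since $\sqrt n\,\binom{2n}{n}4^{-n}$ increases to $1/\sqrt\pi$. You then either treat $n=1$ separately or check $(1+n4^{-n})/\sqrt{\pi n}\le 1$.

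The paper instead sharpens your ``crude'' observation by keeping three terms of the binomial expansion:
\[
4^n\ge\binom{2n}{n-1}+\binom{2n}{n}+\binom{2n}{n+1}=\Bigl(1+\frac{2n}{n+1}\Bigr)\binom{2n}{n}\ge 2\binom{2n}{n}.
\]
Hence $(4^n+n)\binom{2n}{n}\le 2\cdot 4^n\cdot 4^n/2=16^n$ for every $n\ge 1$, with no case distinction and no analytic input.

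So your remark that the factor $2$ ``is not absorbed by a fixed $n$-independent argument'' is inaccurate, as is your closing claim that the $\sqrt{\pi n}$ gain or a small-$n$ check is needed. The elementary bound $\binom{2n}{n}\le 4^n/2$ does the job uniformly. Your route gains an extra $\sqrt{\pi n}$ of slack that the lemma does not need, at the cost of relying on an external estimate; the paper's route is fully self-contained.
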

\begin{proof}
Note that it was proved in \cite{FMZ-24} that
$$
c_n \leq\left(\frac{1+\sqrt{5}}{2}\right)^n .
$$
Observe that
\[
     4^n=\sum_{k=0}^{2 n}\binom{2 n}{k} \geq\binom{ 2 n}{n-1}+\binom{2 n}{n}+\binom{2 n}{n+1} =\left(1+\frac{2 n}{n+1}\right)\binom{2 n}{n} \geq 2 \binom{2n}{n}.
\]
    Since $n \leq 4^n$,
    we obtain
$$
\left(4^n+n\right)\binom{2 n}{n} \leq\left(2 \cdot 4^n\right) \frac{4^n}{2}=16^n .
$$
Therefore,
$$
c_n\left(4^n+n\right)\binom{2 n}{n} \leq\left(\frac{1+\sqrt{5}}{2}\right)^n 16^n
 =(8(1+\sqrt{5}))^n .
$$
Finally, since $\sqrt{5}<9 / 4$,
\[
8(1+\sqrt{5})<8\left(1+\frac{9}{4}\right)=26 . \qedhere
\]
\end{proof}
\begin{lemma} \label{lem:computation}
    Let $n \geq 1$ and $-1 / n<\alpha<0$. Then
$$
\min \left\{\frac{n+1}{2^n}\binom{2 n}{n}, \frac{2^n}{\prod_{j=1}^n(1+j \alpha)}\right\}<\frac{1}{2^{n+1 / \alpha}}\binom{2 n}{n} .
$$
\end{lemma}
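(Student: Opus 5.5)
Since the left side is a minimum, it suffices to show that the second term is already strictly smaller than Colesanti's bound:
\[
\frac{2^n}{\prod_{j=1}^n(1+j\alpha)}<\frac{1}{2^{n+1/\alpha}}\binom{2n}{n}\qquad\text{for all } -\tfrac1n<\alpha<0.
\]
If this fails for some values of $\alpha$, I will fall back on the first term $\frac{n+1}{2^n}\binom{2n}{n}$ there; that comparison is equivalent to $2^{-1/\alpha}>n+1$, i.e.\ $-1/\alpha>\log_2(n+1)$. This holds automatically when $-1/\alpha>n\ge \log_2(n+1)$, with equality in the last step only at $n=1$. In our range $-1/\alpha>n$, so the first term settles the case $n\ge 2$ at once, and for $n=1$ it gives $2^{-1/\alpha}>2$ because $-1/\alpha>1$ strictly. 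In fact, since $\log_2(n+1)\le n$ and $-1/\alpha>n$, the first term alone suffices for every $n\ge1$, and this is the proof I will present.

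Concretely, set $s=-1/\alpha$, so $s>n$. The right-hand side equals $2^{s-n}\binom{2n}{n}$, and the first term is $2^{-n}(n+1)\binom{2n}{n}$. After cancelling $2^{-n}\binom{2n}{n}$, the claim reduces to $n+1<2^{s}$. Because $s>n$ and $2^s$ is strictly increasing, we have $2^s>2^n\ge n+1$, where $2^n\ge n+1$ follows from Bernoulli's inequality or an easy induction. This gives the strict inequality for every $n\ge1$ and every $\alpha\in(-1/n,0)$, so the minimum is strictly less than $\frac{1}{2^{n+1/\alpha}}\binom{2n}{n}$.

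There is no real obstacle. The only point needing care is the sign bookkeeping: $2^{-(n+1/\alpha)}=2^{s-n}$ with $s=-1/\alpha>0$, and the range $\alpha\in(-1/n,0)$ gives $s>n$ rather than $s<n$. Optionally, I will remark that the second term improves on the bound near $\alpha\to0^-$, but this is not needed for the proof.
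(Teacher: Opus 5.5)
Your final argument is correct and is the same as the paper's proof: you compare only the first term of the minimum, and with $s=-1/\alpha>n$ the claim reduces to $n+1<2^{s}$, which follows from $2^{s}>2^{n}\ge n+1$. You can drop the opening plan via the second term; that route actually fails as $\alpha\to-1/n$, where $\prod_{j=1}^n(1+j\alpha)\to 0$, and you rightly abandoned it.
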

\begin{proof}
    Since $1 / \alpha<-n$ and $n+1 \leq 2^n$,
$$
(n+1) 2^{1 / \alpha}<(n+1) 2^{-n} \leq 1 .
$$
Thus,
$$
\frac{n+1}{2^n}\binom{2 n}{n}=(n+1) 2^{1 / \alpha} \frac{1}{2^{n+1 / \alpha}}\binom{2 n}{n}<\frac{1}{2^{n+1 / \alpha}}\binom{2 n}{n} .
$$
The result follows since the minimum is at most its first term.
\end{proof}

\begin{lemma}
    \label{lem: counter-example}
    For every $\alpha \in(-1 / n, 0]$, there exists $f \in C_\alpha\left(\mathbb{R}^n\right)$ such that
    \[
        \|f \star_\alpha \Bar{f} \|_1= 2^n \|f \star_\alpha f\|_1 = 2^n\|f\|_1,
    \]
    where $ \Bar{f}(x) = f(-x)$.
\end{lemma}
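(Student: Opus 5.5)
We take the function used in the remarks: $f(x)=(1-\alpha|x|_1)^{1/\alpha}1_{\mathbb R_+^n}(x)$, and $f(x)=e^{-|x|_1}1_{\mathbb R^n_+}(x)$ when $\alpha=0$. Its convex base is $\varphi(x)=|x|_1+\infty\cdot 1_{\mathbb R^n\setminus\mathbb R^n_+}(x)$. This $\varphi$ is convex, lower semicontinuous, vanishes at $0$ and tends to infinity, so $f\in C_\alpha(\mathbb R^n)$. Integrability for $\alpha>-1/n$ follows from the formula in the proof of Theorem~\ref{thm:monotonicity-alpha-base}. Since $V(t)=|\{\varphi\le t\}|/t^n=1/n!$ is constant, we get $\|f\|_1=1/\prod_j(1+j\alpha)$. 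All three norms are then computed through the base via \eqref{def:alpha-sum-original}, i.e.\ by computing infimal convolutions.

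\textbf{First identity, $f\star_\alpha f=f$.} The base of $f\star_\alpha f$ is $\varphi\mathbin\square\varphi$. For $a,b\in\mathbb R^n_+$ with $a+b=x$ we have $|a|_1+|b|_1=|x|_1$, because the $\ell_1$ norm is additive on the positive orthant. Moreover $x\in\mathbb R^n_+$ is forced. Hence $\varphi\mathbin\square\varphi=\varphi$, so $f\star_\alpha f=f$ and $\|f\star_\alpha f\|_1=\|f\|_1$.

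\textbf{Second identity, $f\star_\alpha\bar f$.} The base is $\psi(x)=\inf\{|a|_1+|b|_1: a,b\in\mathbb R^n_+,\ a-b=x\}$. Coordinatewise, $a_i-b_i=x_i$ with $a_i,b_i\ge0$ gives $a_i+b_i\ge|x_i|$, with equality at $a_i=x_i^+$, $b_i=x_i^-$. Thus $\psi(x)=|x|_1$ on all of $\mathbb R^n$, and
\[
(f\star_\alpha\bar f)(x)=(1-\alpha|x|_1)^{1/\alpha}.
\]
This function is the same as $f$ on each of the $2^n$ orthants, up to a reflection. Therefore $\|f\star_\alpha\bar f\|_1=2^n\|f\|_1$. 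For $\alpha=0$ the same computation works with $e^{-\psi}$.

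There is no real obstacle. The only points to check are that the infimal convolution is attained and that the support of the sum is exactly $\mathbb R^n_+$, resp.\ $\mathbb R^n$. Both are immediate from the coordinatewise structure, and they make precise the convention that $f\star_\alpha\cdot$ vanishes off the Minkowski sum of supports.
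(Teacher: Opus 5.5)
Your proposal is correct and follows essentially the same route as the paper. You use the same function $f$ and the same coordinatewise argument for both identities: $x_i^{\pm}$ as the optimizers, and $|a|_1+|b|_1\ge |x|_1$ for the matching bound. The only difference is cosmetic: you compute $\varphi\mathbin\square\varphi$ and $\varphi\mathbin\square\bar\varphi$ through the convex bases \eqref{def:alpha-sum-original}, whereas the paper evaluates the explicit supremum \eqref{eq:explicit-form-some-alpha-sum} directly, and the two formulations are equivalent by definition.
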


\begin{proof}
    Set $f(x)=\bigl(1-\alpha|x|_1\bigr)^{1/\alpha}\cdot {1}_{\mathbb{R}^n_+}(x),$
    and in the limit case $\alpha=0$, let
    \(
    f(x)=e^{-|x|_1} \cdot {1}_{\mathbb{R}^n_+}(x).
    \)
    We provide the proof for $\alpha<0$, and the case $\alpha=0$ follows by the same argument.
    Using \eqref{eq:explicit-form-some-alpha-sum}, we have
    \begin{align*}
        (f \star_\alpha f)  (x)
        = \sup_{x = y+z} (f^\alpha (y) + f^\alpha(z) - 1)^{1/\alpha} = f(x).
    \end{align*}
    The last equality follows because it is enough to consider $y, z \in \mathbb{R}_{+}^n$; if either function value vanishes, the expression is zero by convention.

    Now, using \eqref{eq:explicit-form-some-alpha-sum}, we obtain
    \begin{align*}
        (f \star_\alpha \bar f) (x) = \sup_{x = y+z} (f^\alpha (y) + f^\alpha(-z) - 1)^{1/\alpha}.
    \end{align*}
    Note that the expression is maximized when $ y,-z \in \R^n_+$.
    By setting $ y_i = \max \{x_i,0\}\geq 0$ and $z_i = \min \{x_i,0\}\leq 0$, we have $(f \star_\alpha \bar f) (x) \geq (1-\alpha |x|_1)^{1/\alpha}.$

    On the other hand, for any $y,-z \in \R^n_+$ such that $x = y+z$, we have $|y|_1 + |z|_1 \geq |x|_1$ and hence
     \[
        f^\alpha(y)+f^\alpha(-z)-1=1-\alpha\left(|y|_1+|z|_1\right) \geq 1 - \alpha |x|_1.
     \]
    Thus, taking the sign of \(\alpha\) into account, we obtain that $(f \star_\alpha \bar f) (x) \leq (1-\alpha |x|_1)^{1/\alpha} $ and so
    \[
        (f \star_\alpha \bar f) (x) = (1-\alpha |x|_1)^{1/\alpha}.
    \]
    Therefore,
    \[
        \|f \star_\alpha \bar f \|_1 = \int (1-\alpha |x|_1)^{1/\alpha} \;\mathrm d x  = 2^n \|f\|_1 = 2^n \|  f \star_\alpha f \|_1. \qedhere
    \]
\end{proof}

\section*{Acknowledgements}

This work was initiated during the ``Harmonic Analysis and Convexity'' program (NSF Grant DMS-1929284) and completed during the
``Synergies between Geometry, Probability, and Computation in High Dimensions'' program (NSF Grant DMS-2424556)
at the Institute for Computational and Experimental Research in Mathematics (ICERM).

The authors also thank Universit\'e Gustave Eiffel and the University of Warsaw for their hospitality. Parts of the work were completed during visits by the second and fourth authors to Matthieu Fradelizi at Universit\'e Gustave Eiffel and by the second and third authors during the “Inequalities and Log-Concavity” program at the University of Warsaw.

During the preparation of this paper, ChatGPT was used as an auxiliary tool to assist with refining the proof arguments. The final statements here are those of the authors, who have verified them and take full responsibility for the content of the paper.

A. Manui was supported by the Thailand Development and Promotion of Science and Technology talent project and the Chateaubriand Fellowship of the Office for Science \& Technology of the Embassy of France in the United States.

A. Manui and A. Zvavitch were supported by the NSF Grant DMS-2247771 and the BSF Grant 2018115.

B. Zawalski was supported in part by NSF Grants DMS-1900008 and DMS-2247771.
% \renewcommand*{\bibfont}{\footnotesize}
% \printbibliography

\bibliographystyle{siam}
% \bibliography{references-GeneralCase}
\end{document}